\newtheorem*{rep@theorem}{\rep@title}
\newcommand{\newreptheorem}[2]{%
\newenvironment{rep#1}[1]{%
 \def\rep@title{#2 \ref{##1}}%
 \begin{rep@theorem}}%
 {\end{rep@theorem}}}
\newtheorem{theorem}{Theorem}[section]
\newtheorem{lemma}[theorem]{Lemma}
\newtheorem{cor}[theorem]{Corollary}
\theoremstyle{definition}
\newtheorem{definition}[theorem]{Definition}
 \def \dom{\operatorname{dom}}
\begin{document}

\title{A Removal Lemma for Ordered Hypergraphs}
\author{Henry Towsner}
\date{\today}
\thanks{Partially supported by NSF grant DMS-1600263}
\address {Department of Mathematics, University of Pennsylvania, 209 South 33rd Street, Philadelphia, PA 19104-6395, USA}
\email{htowsner@math.upenn.edu}
\urladdr{\url{http://www.math.upenn.edu/~htowsner}}

\begin{abstract}
  We prove a removal lemma for induced ordered hypergraphs, simultaneously generalizing Alon--Ben-Eliezer--Fischer's removal lemma for ordered graphs and the induced hypergraph removal lemma.  That is, we show that if an ordered hypergraph $(V,G,<)$ has few induced copies of a small ordered hypergraph $(W,H,\prec)$ then there is a small modification $G'$ so that $(V,G',<)$ has no induced copies of $(W,H,\prec)$.  (Note that we do \emph{not} need to modify the ordering $<$.)

  We give our proof in the setting of an ultraproduct (that is, a Keisler graded probability space), where we can give an abstract formulation of hypergraph removal in terms of sequences of $\sigma$-algebras.  We then show that ordered hypergraphs can be viewed as hypergraphs where we view the intervals as an additional notion of a ``very structured'' set.  Along the way we give an explicit construction of the bijection between the ultraproduct limit object and the corresponding hyerpgraphon.
\end{abstract}

\maketitle

\section{Introduction}

In this paper, we will show a removal lemma for ordered hypergraphs---a simultaneous generalization of the removal lemma for ordered graphs \cite{MR3734286,MR4102484} and for hypergraphs \cite{MR2373376,rodl:MR2069663,nagle:MR2198495}. 

As in similar results, the methods naturally generalize to finite colorings of $k$-tuples (``hypermatrices over a finite alphabet'').  Therefore, in full generality, our main result is the following.
\begin{repcor}{thm:main_ordered}
  Let $\epsilon>0$ be given and let $\Sigma$ be a finite alphabet.  There is a $\delta>0$ so that whenever $(\Omega,<)$ is an ordered set and $\rho:{\Omega\choose k}\rightarrow\Sigma$, there is a $\rho':{\Omega\choose k}\rightarrow\Sigma$ such that:
  \begin{itemize}
  \item $|\{\vec x\in{\Omega\choose k}\mid \rho(\vec x)\neq \rho'(\vec x)\}|<\epsilon|\Omega|^k$, and
  \item for each ordered set $(W,\prec)$ with $|W|<1/\epsilon$ and each coloring $c:{W\choose k}\rightarrow\Sigma$, either:
    \begin{itemize}
    \item $(\Omega,\rho',<)$ contains no copies of $(W,c,\prec)$ (that is, there are no order-preserving functions $\pi:W\rightarrow\Omega$ such that $\rho'(\pi(\vec x))=c(\vec x)$ for all $\vec x\in{W\choose k}$), or
    \item $(\Omega,\rho,<)$ contains many copies of $(W,c,\prec)$ (that is, the set of order-preserving functions $\pi:W\rightarrow\Omega$ such that $\rho(\pi(\vec x))=c(\vec x)$ for all $\vec x\in{W\choose k}$ has size at least $\delta|\Omega|^{|W|}$).
    \end{itemize}
  \end{itemize}
\end{repcor}

Coregliano and Razborov have recently \cite{MR4153699} shown a result that could also plausibly be called ordered hypergraph removal. Their removal involves modifying the entire structure---that is, one replaces $(\Omega,<,\rho)$ with $(\Omega,<',\rho')$, whereas the result here only modifies $\rho$. Their argument is quite general, applying to a wide range of structures. By contrast, our result is narrower, though we discuss at the end how the arguments might be generalized.

Our approach is to restate the usual proof of hypergraph removal in a sufficiently general way that the proof of ordered hypergraph removal falls out without much change.  We will consider $k$-graphs which have a sequence of notions of ``structured sets''.  In the usual graph removal lemma, this sequence would has length $1$: the only kind of structured set is the rectangles (that is, sets of edges of the form $A\times B$ for sets $A$ and $B$).

In the hypergraph removal lemma for $k$-graphs, the sequence of notions of structure has length $k-1$: the first, most general tier of structured sets are \emph{cylinder sets} generated by $k-1$-tuples (see the next section for a definition), then the next tier is the cylinder sets generate by $k-1$-tuples, and so on, until the final, most restrictive tier of structure consists of boxes (sets of the form $\prod_{i\leq k}A_i$, which are exactly the cylinder sets generated by $1$-tuples).

Meanwhile, in the ordered graph case, the sequence of notions of structure has length $2$: the more general tier of structured set is the rectangles $A\times B$ where $A,B$ are arbitrary sets of vertices, while the second, more restrictive tier of structure is sets of the form $I\times J$ where $I,J$ must be intervals in the ordering.

Once we have set up this general framework, ordered hypergraph removal will fall out almost instantly from the proof of hypergraph removal: we will use a sequence of notions of structure of length $k$, beginning with the cylinder sets generated by $k-1$-tuples, proceeding down to boxes of the form $\prod_{i\leq k}A_i$ where the $A_i$ are arbitrary, and then adding an additional notion of structured set given by boxes of the form $\prod_{i\leq k}I_i$ where the $I_i$ are intervals.

The idea that Szemer\'edi's regularity lemma and its generalizations can be viewed in terms of nested notions of structure is present, for instance, in \cite{MR2212136,MR2259060}, which describe Szemer\'edi's regularity lemma in terms of conditional expectation.

Working with multiple layers of structure typical requires fairly complicated dependencies to correctly express bounds in the finite setting, so it is convenient to pass to an infinitary, analytic setting where we can ``let $\epsilon$ equal $0$''---that is, where some of the bounds will disappear into a measure-theoretic limit object.

There are two main approaches to representing the notion of structure in such a formalism.  To be explicit, consider the case of a $3$-graph; in the finite setting, we have a large vertex set $V$ and consider some symmetric set $H\subseteq V^3$.  In one approach to limit objects, the graphon approach (e.g. \cite{lovasz:MR2274085}), the limit object is an uncountable space $\Lambda$ and a measurable function $f:\Lambda^6\rightarrow[0,1]$.  The first three coordinates correspond to the three coordinates in $V^3$, while the additional three coordinates correspond to the \emph{pairs} of coordinates.  More generally, if we began with sets $H\subseteq V^k$, the limit object would involve functions on $\Lambda^{2^k-2}$.  (The value $2^k-2$ corresponds to the subsets of $\{1,2,\ldots,k\}$ except for the empty set\footnote{In a third, related, setting---\emph{arrays of exchangeable random variables} \cite{MR2463439,aldous:MR637937,hoover:arrays}---the coordinate corresponding to the empty set is typically included, but easily eliminated because an exchangeable array of random variables is a combination of \emph{dissociated} arrays, where the dissociated arrays are precisely those where the coordinate corresponding to the empty set can be ignored.  A natural generalization of a $k$-hypergraphon would be to add the coordinate corresponding to the empty set; this would be roughly represent an ensemble of $k$-hypergraphons rather than a single such object.} and the whole set, which represents a ``purely quasirandom'' component which is omitted from the limit object\footnote{The coordinate corresponding to the full set is related to why we end up with a function rather than a set: we could work with a set $F\subseteq\Lambda^{2^k-1}$, and think of $f(\vec\omega)=\int f(\vec\omega,u)\,du$ where $u$ is the extra coordinate.})

Analogously, an ordered graph is a symmetric $H\subseteq V^2$ where $V$ is an ordered set.  An \emph{orderon} \cite{1811.02023} (the graphon-like limit object corresponding to an ordered graph) is then a function from $\Lambda^4$ to $[0,1]$: elements of $V$ are analogous to pairs from $\Lambda$, where the first component represents the information about the ordering and the second component the additional information which is present in the vertex which not explained by its position in the ordering. (Permutons \cite{MR2995721} and latinons \cite{2010.07854}---limit objects for permutations and Latin squares, respectively---similarly acquire ``extra'' coordinates in this way.)

It is both an advantage and a disadvantage of this representation that it fully separates out the interactions between these coordinates: the coordinates are combined as a familiar product measure space (it is common to take $\Lambda=[0,1]$, so the functions in question are simply measurable functions on $[0,1]^n$ for some $n$) and one can use standard results (for instance, the Lebesgue density theorem, as in \cite{MR2964622}) on the space.  However, because of this de-association of the coordinates, it is difficult to interpret what the higher order coordinates ``mean'' in a general way: when we represent a $3$-graph with a function $f(x,y,z,u,v,w)$, it is difficult to concretely say, in a general way, what a particular value of $u$ means.  (Indeed, it is artificial, and a bit misleading, to represent these objects as powers of a single space: for instance, there is no reason to think we can swap the $z$ and $u$ coordinates in a meaningful way.)

Here we prefer a different approach to the limit object where the limit objects have a more familiar form: our version of a limit of $3$-graphs will be a subset of $\Omega^3$ for an uncountable set $\Omega$, and our version of an ordered graph will be a subset of $\Omega^2$ where $\Omega$ is an ordered set.  The price is that we must work with a \emph{Keisler graded probability space}\footnote{See Section \ref{sec:keisler}.}.  This means that the measurable sets are more complicated: in addition to the sets of pairs given by the standard product measure construction, there are typically additional measurable sets of pairs which include the quasirandom sets.

In this setting, different kinds of structure are identified by looking at sub-$\sigma$-algebras of measurable sets, which represent notions of structure \cite{MR3583029, henry12:_analy_approac_spars_hyper}.  For instance, when we consider a $3$-graph $H\subseteq\Omega^3$, we have a collection $\mathcal{B}_2$ of measurable subsets of $\Omega^2$, containing all the information we need about the first two coordinates, but there is also a product $\sigma$-algebra, $\mathcal{B}_{2,1}\subseteq\mathcal{B}_2$, which is the collection of measurable sets generated by rectangles.  So the first two coordinates of $\Lambda^6$ correspond to determining information about sets in $\mathcal{B}_{2,1}$, while the fourth coordinate of $\Lambda^6$ corresponds to information about the quasirandom elements of $\mathcal{B}_2$.

Formally, the two approaches are linked by a suitable map $\pi:\Omega^3\rightarrow\Lambda^6$ where, for instance, $\pi^{-1}(C\times\Lambda^4)$ must give a set of the form $A\times\Omega$ where $A$ is $\mathcal{B}_{2,1}$-measurable, while $\pi^{-1}(\Lambda^3\times B\times\Lambda^2)$ must give a set of the form $A\times\Omega$ where $A$ is quasirandom.  (In fact, our approach to the proof will lead us to construct something close to an explicit version of this map.)

Similarly, when we have an ordering on $\Omega$ we have both the collection $\mathcal{B}_1$ of all measurable subsets of $\Omega$, but also a sub-$\sigma$-algebra $\mathcal{B}_<$ which is generated by the intervals. \footnote{The association between these approaches are quite systematic. For instance, the fact that orderons are functions with domain $\Lambda^4$ is essentially telling us that when we look at the Keisler graded probability space, we should be paying attention to a particular sub-$\sigma$-algebra of $\mathcal{B}_1$, namely $\mathcal{B}_<$.}

In this setting, we will be able to prove:
\begin{reptheorem}{thm:ordered_hypergraph_removal}
  Let $\Sigma$ be a finite alphabet and let $(\Omega,<)$ be a set together with a Keisler graded probability space on $\Omega$ such that $<$ is measurable, and suppose $\rho:{\Omega\choose k}\rightarrow\Sigma$ is measurable.  For each $w$ and each $\epsilon>0$, there is a $\rho':{\Omega\choose k}\rightarrow\Sigma$ such that
  \begin{itemize}
  \item $\mu(\{\vec x\mid\rho(\vec x)\neq \rho'(\vec x)\})<\epsilon$, and
  \item for each ordered set $(W,\prec)$ and coloring $c:{W\choose k}\rightarrow\Sigma$ , either:
    \begin{itemize}
    \item $(\Omega,\rho',<)$ contains no copies of $(W,c,\prec)$ (that is, there are no order-preserving functions $\pi:W\rightarrow\Omega$ such that $\rho'(\pi(\vec x))=c(\vec x)$ for all $\vec x\in{W\choose k}$), or
    \item $(\Omega,\rho,<)$ contains many copies of $(W,c,\prec)$ (that is, the set of order-preserving functions $\pi:W\rightarrow\Omega$ such that $\rho(\pi(\vec x))=c(\vec x)$ for all $\vec x\in{W\choose k}$ has positive measure).
    \end{itemize}
  \end{itemize}
\end{reptheorem}
Corollary \ref{thm:main_ordered} follows immediately by a standard ultraproduct argument as described in \cite{goldbring:_approx_logic_measure}.

The main technique in our proof will be reproducing, in this setting, something like the Lebesgue density theorem: a way of defining a notion of density in this setting so that almost every point is dense.

The use of infinitary and measure-theoretic arguments in proofs like this is superficial: rather than interpreting the proof of Theorem \ref{thm:ordered_hypergraph_removal} as actually involving infinite sets, one can interpret these as abbreviations for complicated statements about finite sets.  In particular, although the proof does not explicitly give bounds on the relationship between $\delta$ and $\epsilon$ in Theorem \ref{thm:ordered_hypergraph_removal}, it is routine (though quite tedious) to translate the proof into an explicit combinatorial one with bounds.  (See \cite{MR2964881,1804.10809} for a formal descriptions of how this may be done in general.)

The proof here uses a familiar structure, so we can already tell roughly what upper bound it gets: the ``unwound'' proof of Theorem \ref{thm:ordered_hypergraph_removal} for ordered $k$-graphs goes through a result similar to the regularity lemma for $k+1$-graphs.  This means the upper bound for removal of ordered $k$-graphs would be on the order of the $k+2$-th function in the fast-growing hierarchy.  (Recall that the second function in this family if roughly exponential, and later functions are obtained by iterating the previous function, so the $3$-rd function is iterated exponentiaion, the $4$-th function is the ``wowzer'' function obtained by iterating the iterated exponential, and so on.)  For hypergraph regularity, these bounds are known \cite{MR4025519} to be tight.  (For graph removal, better bounds \cite{fox:MR2811609} can be obtained by avoiding the regularity lemma.  It seems likely that similar methods can produce at least some improvement on the upper bound for hypergraph and ordered hypergraph removal.)

\section{Preliminaries}

\subsection{Cylinder Intersection Sets}

One of the central ideas in the proof of graph removal is approximating a graph $(V,E)$ using sets of vertices.  In modern presentations, this idea is usually expressed using some form of the Szemer\'edi regularity lemma: we find a partition $V=\bigcup_{i\leq n}V_i$ so that most of the bipartite graphs $(V_i,V_j,E\cap(V_i\times V_j))$ have a quasi-randomness property.  To turn this into a proof of graph removal, one of the key points is that almost all of the edges in $E$ will belong to bipartite graphs $(V_i,V_j,E\cap(V_i\times V_j))$ where the quasi-randomness property holds and the density $\frac{|E\cap(V_i\times V_j)|}{|V_i\times V_j|}\geq\epsilon>0$ is bounded away from $0$.

Letting $P$ be the set of pairs $(i,j)$ where this density is bounded away from $0$, we end up considering a related graph, $E^+=E\cap(\prod_{i,j\in P}V_i\times V_j)$, the edges which are near many other edges.

In order to generalize this to $k$-graphs, we need to generalize the idea of a product set to higher arity.  The right notion is a \emph{cylinder intersection set}: a cylinder intersection set of $k$-tuples is a collection of $k$-tuples defined by restricting the sets that certain $r$-sub-tuples can belong to for $r<k$.  For instance, if $k=3$ and $r=2$, the prototypical cylinder intersection set is a set of the form
\[\{(x,y,z)\mid (x,y)\in A,\ (x,z)\in B\, \text{ and }(y,z)\in C\}.\]
We can see that a product is just a cylinder intersection set where we only consider sub-tuples with $r=1$.

Since we will be considering cylinder intersection sets extensively, and since it turns out that we can view graph homomorphisms as themselves being cylinder intersection sets, it will be convenient to introduce some uniform notation.

We are interested in a situation where we have a finite set of points---say, $W$---with some structure (a $k$-graph or an ordering) and are interested in ``copies'' inside some other set $V$.  For this purpose, it is useful to work with ``$W$-tuples''.

\begin{definition}
  When $W$ is a finite set, a \emph{$W$-tuple} from $\Omega$ is a function $W$ to $\Omega$.  We write $\Omega^W$ for the set of $W$ tuples.

  When $k$ is a non-negative integer, we write $[k]$ for the set $\{1,2,\ldots,k\}$.
\end{definition}
These definitions equate a $[k]$-tuple with a $k$-tuple and a $\Omega^{[k]}$ with $\Omega^k$, so we can view this as an extension of the usual notation for tuples.
\begin{definition}
  When $\vec x_W\in \Omega^W$ is a $W$-tuple and $e\subseteq W$, we write $\vec x_e\in \Omega^e$ for the $e$-tuple $\vec x_e=\vec x_W\upharpoonright e$.
\end{definition}
When $e=\{i\}$ is a singleton, we can abbreviate $x_i=\vec x_{\{i\}}=\vec x_W(i)$ to recover the usual notation for tuples.

\begin{definition}
We write ${W\choose k}$ for the collection of subsets of $W$ of size $k$.  We write ${W\choose \leq k}$ for $\bigcup_{i\leq k}{W\choose i}$, the collection of subsets of $W$ of size $\leq k$, and ${W\choose <k}$ for $\bigcup_{i<k}{W\choose i}$.
  
Suppose that $W$ is a finite set and $S$ is a collection of subsets of $W$.  A \emph{$(W,S)$-cylinder intersection set} is a set of the form
\[T_{S}(\{A_e\}_{e\in S})=\{\vec x_W\mid\forall e\in S\ \vec x_e\in A_e\}\]
where each $A_e\subseteq W^e$.  We call the sets $A_e$ the \emph{components} of $T_{S}(\{A_e\}_{e\in S})$.
\end{definition}

We are particularly interested in two special cases of this definition.
\begin{definition}
  When $H=(W,F)$ is a finite $k$-graph and $G=(\Omega,E)$ is a $k$-graph, we define the \emph{copies of $H$ in $G$}, written $T_H(G)$, to be $T_F(\{E\})$.
\end{definition}
That is, the copies of $(W,F)$ in $(\Omega,E)$ are the tuples $\vec x_W$ such that, for every $e\in F$, $\vec x_e\in E$.

\begin{definition}
    When $H=(W,F)$ is a finite $k$-graph and $E\subseteq\Omega^k$ is a $k$-graph, we define the \emph{induced copies of $H$ in $G$}, written $T_H^{ind}(G)$ to be $T_{{W\choose k}}(\{A_e\})$ where $A_e=\left\{\begin{array}{ll}E&\text{if }e\in F\\\Omega^k\setminus E&\text{otherwise}\end{array}\right..$
  \end{definition}
That is, the induced copies of $(W,F)$ in $(\Omega,E)$ are the tuples $\vec x_W$ such that, for each $e\in{W\choose k}$, $e\in F$ if and only if $\vec x_e\in E$.

A few other kinds of cylinder intersection sets will be needed along the way.  Another case we will see is when $S={[k]\choose \leq k}$ or $S={[k]\choose <k}$.  We might think of these as ``simplices''.  For instance, when $S={[3]\choose < 3}$, $T_S(\{A_e\})$ is a set of triangles $(x_1,x_2,x_3)$ such that each vertex $x_i$ belongs to a set of vertices $A_{\{i\}}$ while each edge $(x_i,x_j)$ belongs to a set of edges $A_{\{i,j\}}$.

Results for induced graphs extend immediately to $\Sigma$-colorings where $\Sigma$ is some finite alphabet.
\begin{definition}
  Suppose $\rho:{\Omega\choose k}\rightarrow\Sigma$ and $c:{W\choose k}\rightarrow\Sigma$ are colorings.  The \emph{copies of $(W,c)$ in $(\Omega,\rho)$}, written $T_{W,c}(\Omega,\rho)$, are $T_{{W\choose k}}(\{A_e\})$ where $A_e=\{\vec x_e\mid \rho(\vec x_e)=c(e)\}$.
\end{definition}
An induced copy of $(W,F)$ in $(\Omega,E)$ is precisely a copy of $(W,\chi_F)$ in $(\Omega,\chi_E)$ where the characteristic functions $\chi_F,\chi_E$ are viewed as colorings where $\Sigma=\{0,1\}$.

\subsection{Measure Spaces}\label{sec:keisler}

It will be convenient for us to prove our results in an infinitary setting where we can use some measure theoretic ideas.  A \emph{Keisler graded probability space} consists of a set $\Omega$ and, for each $k$, a measure $\mu_k$ on subsets of $\Omega^k$.

When $\Omega$ is finite, the natural choice is to take each $\mu_k$ to be the counting measure, $\mu_k(S)=\frac{|S|}{|\Omega|^k}$, on subsets of $\Omega^k$.  When $\Omega$ is infinite, we need to fix $\sigma$-algebras of measurable sets and add some conditions to ensure that the measures are compatible with each other.

\begin{definition}
  A \emph{Keisler graded probability space} on $\Omega$ is a collection of probability measure spaces, $(\Omega^k,\mathcal{B}_k,\mu_k)$, for each $k\in\mathbb{N}$ so that:
  \begin{itemize}
  \item whenever $\pi:[1,k]\rightarrow[1,k]$ is a permutation and $B\in\mathcal{B}_k$, we have $B^\pi=\{(x_{\pi(1)},\ldots,x_{\pi(k)})\mid (x_1,\ldots,x_k)\in B\}\in\mathcal{B}_k$ and $\mu_k(B^\pi)=\mu_k(B)$,
  \item if $B\in\mathcal{B}_k$ and $C\in\mathcal{B}_r$ then $B\times C\in\mathcal{B}_{k+r}$,
  \item whenever $B\in\mathcal{B}_{k+r}$, the set of $(x_1,\ldots,x_r)$ such that $B_{x_1,\ldots,x_r}=\{(x_{r+1},\ldots,x_{k+r})\mid (x_1,\ldots,x_{k+1})\in B\}\in\mathcal{B}_k$ is a set in $\mathcal{B}_{r}$ of measure $1$ and
\[\mu_{k+r}(B)=\int\mu_k(B_{x_1,\ldots,x_r})\,d\mu_r.\]
\end{itemize}

We say $\{(\Omega^k,\mathcal{B}_k,\mu_k)\}_{k\in\mathbb{N}}$ is \emph{atomless} if, for every $x\in V$, $\mu_1(\{x\})=0$.
\end{definition}
Atomless Keisler graded probability spaces are the setting obtained by taking the limit of the counting measures as the size of $\Omega$ approaches infinity (made precise by using an ultraproduct).  As a result, for many purposes one can simply pretend that an atomless Keisler graded probability space is finite with $|\Omega|$ very, very large.

The special case where, for each $k$, $\mathcal{B}^k$ is equal to the product $\sigma$-algebra $\mathcal{B}_1^k$ is the most familiar example, but in general a Keisler graded probability space may have additional measurable sets which do not belong to the product $\sigma$-algebra.  These additional sets precisely correspond to the quasirandom graphs and hypergraphs \cite{MR3583029}.

More generally, we take $\mathcal{B}_I$ to be a measure space on $\Omega^I$, obtained from $\mathcal{B}_{|I|}$ in the natural way by choosing any bijection between $I$ and $\{1,2,\ldots,|I|\}$, and we have a corresponding measure $\mu_I$ on $\mathcal{B}_I$.  Since $\mathcal{B}_{|I|}$ and $\mu_{|I|}$ are symmetric, $\mathcal{B}_I$ and $\mu_I$ do not depend on the choice of bijection.

The $\sigma$-algebra $\mathcal{B}_k$ of all measurable sets has canonical sub-$\sigma$-algebras generated by cylinder intersection sets which are, in general, proper; these correspond exactly to the ``non-quasirandom'' sets (for various notions of quasirandomness).
\begin{definition}
  When $r<k$, $\mathcal{B}_{k,r}$ is the sub-$\sigma$-algebra of $\mathcal{B}_k$ generated by all $([k],{[k]\choose r})$-cylinder intersection sets where all components are elements of $\mathcal{B}_r$.

  More generally, when $\mathcal{D}$ is a sub-algebra of $\mathcal{B}_r$, we write $\mathcal{K}_{k,r}(\mathcal{D})$ for the sub-$\sigma$-algebra of $\mathcal{B}_k$ generated by all $(k,{[k]\choose r})$-cylinder intersection sets where all components are elements of $\mathcal{D}$.
  
  We say $\{(\Omega^k,\mathcal{B}_k,\mu_k)\}_{k\in\mathbb{N}}$ is \emph{countably approximated} if each, for each $k$, there is a countable algebra of sets $\mathcal{B}_k^0\subseteq\mathcal{B}^k$ such that:
\begin{itemize}
\item $\mathcal{K}_{k,r}(\mathcal{B}_r^0)\subseteq\mathcal{B}_k^0$ for all $r<k$,
\item the algebras $\mathcal{B}_k^0$ are symmetric,
\item whenever $B\in\mathcal{B}_k^0$, $r<k$, and $q\in\mathbb{Q}\cap(0,1)$, there is a $D\in\mathcal{B}_r^0$ with
  \[\{\vec x\in\Omega^r\mid \mu(B_{\vec x})<q\}\subseteq D\subseteq\{\vec x\in\Omega^r\mid\mu(B_{\vec x})\leq q\}.\]
\end{itemize}
\end{definition}
Ultraproducts of graphs are countably approximated, using the definable sets (in a large enough language) as the approximating sets. (It turns out that we \emph{cannot} quite expect to exactly close the algebras under level sets; see \cite{goldbring:_approx_logic_measure} for more on the approach here.)

We can think of $\mathcal{B}_{k,r}$ as being the sets of $k$-tuples which are ``explained by'' properties of $r$-tuples.

Since $\mathcal{B}_{k,r}$ is symmetric, we can also define $\mathcal{B}_{W,r}$ in the natural way---equivalently, as the image of $\mathcal{B}_{|W|,r}$ under any bijection of $W$ with $\{1,\ldots,|W|\}$, or as the $\sigma$-algebra generated by $(W,{W\choose r})$-cylinder intersection sets where the component $A_e$ belongs to $\mathcal{B}_e$.

\begin{definition}
  We define
  $t_S(\{A_e\}_{e\in S})=\mu(T_S(\{A_e\}_{e\in S}))$.  More generally, we define
  \[t_S(\{f_e\}_{e\in S})=\int \prod_{e\in S}f_e(\vec x_e)\,d\mu.\]

  We similarly define $t_H(G)=\mu(T_H(E))$, $t^{ind}_H(G)=\mu(T^{ind}_H(G))$, and $t_{W,c}(\Omega,\rho)=\mu(T_{W,c}(\Omega,\rho))$.
\end{definition}
There will be no confusion between these related definitions, since $t_S(\{A_e\})=t_S(\{\chi_{A_e}\})$.

\section{Removal and Induced Removal for Graphs}

In this section we prove graph removal, using this as a vehicle to introduce our notation and approach and prove some lemmas we will need for the more general results in later sections.

\subsection{Neighborhoods and Points of Density}

We would like to work with points of density of measurable functions---that is, points which behave like limits of the nearby points.  One difficulty is that a general probability measure space may not have a natural basis like the open balls.  We will fix this by brute force: we simply pick, more or less arbitrarily, a family of neighborhoods around each point which suffice for our purposes.\footnote{An alternative method, which plays a central role in the ``graphon'' approach to limit graphs \cite{MR3012035}, is to use the fact that every probability measure space is, in a suitable way, equivalent to the Lebesgue measure on the unit interval, and then use the usual notion of a point of density.  This is used, for instance, in \cite{MR2964622} to prove hypergraph regularity.}

More precisely, we will have, for each point $x\in \Omega$, a sequence $\mathcal{N}^j(x)$ of neighborhoods such that $\mu(\mathcal{N}^j(x))\rightarrow 0$.  (The analogous arrangement in the Lebesgue measure would take $\mathcal{N}^j(x)=B_{1/j}(x)$.)

Since we will need it later and the definition is the same, we will define a system of neighborhoods around tuples in $\Omega^r$ as well.

\begin{definition}
  When $\mathcal{D}^0$ is a countable collection of subsets of $\Omega^r$, a \emph{system of neighborhoods in $\mathcal{D}^0$} is a sequence of partitions, $\mathcal{N}=\{\mathcal{N}^j\}$ such that:
  \begin{itemize}
  \item each $\mathcal{N}^j$ is a finite partition of $\Omega^r$,
  \item when $i<j$, $\mathcal{N}^j$ refines $\mathcal{N}^i$,
  \item for every set $A\in\mathcal{D}^0$, there is a $j$ so that $A$ differs by measure $0$ from a union of elements of $\mathcal{N}^j$,
  \item $\lim_{j\rightarrow\infty}\max_{P\in\mathcal{N}^j}\mu(P)=0$.
  \end{itemize}
  We call $r$ the \emph{arity} of $\mathcal{N}$.

  We write $\mathcal{N}^\sigma$ for the $\sigma$-algebra generated by all sets in $\bigcup_j\mathcal{N}^j$.
\end{definition}

Since we will be working with partitions frequently, we introduce some notation. 

\begin{definition}
  When $\mathcal{N}^j$ is a partition of $\Omega^r$ and $\vec x\in\Omega^r$ is a point, we write $\mathcal{N}^j(\vec x)$ for the unique set $P\in\mathcal{N}^j$ such that $\vec x\in P$.  
\end{definition}


We should think of $\mathcal{N}$ as being a schema giving, for each tuple $\vec x_W$ and each number $j$, a set $\mathcal{N}^j(\vec x_W)$ which is the ``ball around the tuple $\vec x_W$''.

We want to lift partitions of $\Omega^r$ to partitions of $\Omega^k$ with $r<k$ in the obvious way---a partition of $\Omega^k$ is a cylinder intersection set coming from our partition of $\Omega^r$.

\begin{definition}
  When $\mathcal{N}^j$ is a partition of $\Omega^r$, $r\leq |W|$ and $\vec x_W\in\Omega_W$, we write $\mathcal{N}^j(\vec x_W)$ for the ${W\choose r}$-cylinder intersection set $T_{{W\choose r}}(\{\mathcal{N}^j(\vec x_e)\}_{e\in{W\choose r}})$.
\end{definition}

For instance, when $\mathcal{N}$ has arity $1$, it induces partitions of $\Omega^2$ into sets of the form $P\times Q$ where $P,Q\in\mathcal{N}^j$.

A system of neighborhoods $\mathcal{N}=\{\mathcal{N}^j\}$ give us a natural way to define density.
\begin{definition}
  Let $\mathcal{N}$ be a system of neighborhoods of arity $r$.  Given $k\geq r$ and $f:\Omega^k\rightarrow[0,1]$, define
  \[f^j_{\mathcal{N}}(\vec x)=\frac{1}{\mu(\mathcal{N}^j(\vec x))}\int_{\mathcal{N}^j(\vec x)}f(\vec x)\,d\mu\]
  whenever $\mu(\mathcal{N}^j(\vec x))>0$ and
  \[f^+_{\mathcal{N}}(\vec x)=\lim_{j\rightarrow\infty}f^j_{\mathcal{N}}(\vec x)\]
  wherever each $f^j_{\mathcal{N}}$ is defined and this limit exists.  We call $\vec x$ a \emph{point of density for $f$ in $\mathcal{N}$} if $f^+_{\mathcal{N}}(\vec x)$ exists and
  \[\lim_{j\rightarrow\infty}\frac{1}{\mu(\mathcal{N}^j(\vec x))}\int_{\mathcal{N}^j(\vec x)}|f^+_{\mathcal{N}}(\vec y)-f^+_{\mathcal{N}}(\vec x)|\,d\mu(\vec y)=0.\]

  When $A\subseteq\Omega^k$, we call $\vec x$ a point of density for $A$ in $\mathcal{N}$ if $\vec x$ is a point of density for $\chi_A$.

\end{definition}

One might have expected the definition of a point of density to be simply that
\[\lim_{j\rightarrow\infty}\frac{1}{\mu(\mathcal{N}^j(\vec x))}\int_{\mathcal{N}^j(\vec x)}|f(\vec y)-f(\vec x)|\,d\mu(\vec y)=0.\]
But take $E$ to be a quasi-random graph and let $f=\chi_E$ and $\mathcal{N}$ a system of neighborhoods of arity $1$; in this case, every positive measure neighborhood $\mathcal{N}^j(x_1,x_2)=\mathcal{N}^j(x_1)\times\mathcal{N}^j(x_2)$ has the property that half its points belong to $E$ and half do not belong to $E$, so there would be no points of density as all.

This is the fundamental difference from classical Lebesgue measure: because we are working in a Keisler graded probability space with quasi-random elements, we cannot expect most points in the graph to be near other points in the graph. However we will see that we can expect most points to have a well-defined density, and to be near other points with a similar density.

We will usually want, not just any point of density of $f$, but one where the density $f^+_{\mathcal{N}}$ is positive.

\begin{definition}
  We say $x$ is a \emph{positive point of density} of $f$ if $x$ is a point of density of $f$ with $f^+_{\mathcal{N}}(x)>0$.  When $E$ is a set, a \emph{positive point of density} of $E$ is a positive point of density of $\chi_E$.
\end{definition}

When $r=1$---that is, when $\mathcal{N}$ consists of sets of points---there are no particular symmetry issues.  In particular, when $f$ is a symmetric function (for instance, the characteristic function of a graph of hypergraph), every permutation of a point of density is also a point of density.  When $r>1$, we have to worry about whether the neighborhoods themselves are symmetric.
\begin{lemma}
  If $f$ is symmetric, each $\mathcal{N}^j$ is symmetric (that is, each permutation of a set in $\mathcal{N}^j$ is also in $\mathcal{N}^j$), and $\vec x$ is a point of density for $f$ in $\mathcal{N}$ then each permutation of $\vec x$ is also a point of density.
\end{lemma}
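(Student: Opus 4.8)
The plan is to reduce everything to the transposition case and then chase the definitions through each clause. Let me think about what needs to happen.

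The statement: if $f$ is symmetric, each $\mathcal{N}^j$ is symmetric, and $\vec x$ is a point of density for $f$ in $\mathcal{N}$, then so is every permutation $\vec x^\pi$.

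Since permutations are generated by transpositions, and being a point of density is preserved under composition of permutations (once we know each transposition preserves it), it suffices to handle a single transposition $\pi$. Fix such a $\pi$ and let $\vec y = \vec x^\pi$.

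Key observation 1: For any partition $\mathcal{N}^j$ that is symmetric, and any point $\vec x$, we have $\mathcal{N}^j(\vec x^\pi) = (\mathcal{N}^j(\vec x))^\pi$. Indeed $\mathcal{N}^j(\vec x)$ is the unique cell containing $\vec x$; applying $\pi$ to every tuple in this cell gives a set in $\mathcal{N}^j$ (by symmetry of the partition) that contains $\vec x^\pi$, hence equals $\mathcal{N}^j(\vec x^\pi)$. Combined with the measure-preservation clause of a Keisler graded probability space, $\mu(\mathcal{N}^j(\vec x^\pi)) = \mu(\mathcal{N}^j(\vec x))$; in particular one is positive iff the other is.

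Key observation 2: For the averages, $f^j_{\mathcal{N}}(\vec x^\pi) = \frac{1}{\mu(\mathcal{N}^j(\vec x^\pi))}\int_{\mathcal{N}^j(\vec x^\pi)} f\,d\mu = \frac{1}{\mu(\mathcal{N}^j(\vec x))}\int_{(\mathcal{N}^j(\vec x))^\pi} f\,d\mu$. Now change variables: the map $\vec z \mapsto \vec z^\pi$ is measure-preserving on $\Omega^k$ (again by the Keisler axioms), and $f(\vec z^\pi) = f(\vec z)$ since $f$ is symmetric. So this integral equals $\frac{1}{\mu(\mathcal{N}^j(\vec x))}\int_{\mathcal{N}^j(\vec x)} f\,d\mu = f^j_{\mathcal{N}}(\vec x)$. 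Hence $f^j_{\mathcal{N}}(\vec x^\pi) = f^j_{\mathcal{N}}(\vec x)$ for all $j$ where it is defined, and taking limits, $f^+_{\mathcal{N}}(\vec x^\pi)$ exists and equals $f^+_{\mathcal{N}}(\vec x)$. More care is needed: $f^+_\mathcal{N}$ is a function defined (almost everywhere) on $\Omega^k$, and the relation we actually want is $f^+_\mathcal{N}(\vec z^\pi) = f^+_\mathcal{N}(\vec z)$ for a.e. $\vec z$ — this follows from the same change-of-variables argument applied pointwise wherever both sides are defined, plus the fact that the set where $f^+_\mathcal{N}$ is defined is $\pi$-invariant up to measure zero.

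Key observation 3 (the density condition itself): we must show
\[
\lim_{j\to\infty}\frac{1}{\mu(\mathcal{N}^j(\vec x^\pi))}\int_{\mathcal{N}^j(\vec x^\pi)} |f^+_{\mathcal{N}}(\vec y) - f^+_{\mathcal{N}}(\vec x^\pi)|\,d\mu(\vec y) = 0.
\]
Rewrite the integral over $\mathcal{N}^j(\vec x^\pi) = (\mathcal{N}^j(\vec x))^\pi$ via the substitution $\vec y = \vec z^\pi$; using $f^+_{\mathcal{N}}(\vec z^\pi) = f^+_{\mathcal{N}}(\vec z)$ a.e. and $f^+_\mathcal{N}(\vec x^\pi) = f^+_\mathcal{N}(\vec x)$, the expression becomes $\frac{1}{\mu(\mathcal{N}^j(\vec x))}\int_{\mathcal{N}^j(\vec x)} |f^+_{\mathcal{N}}(\vec z) - f^+_{\mathcal{N}}(\vec x)|\,d\mu(\vec z)$, which tends to $0$ precisely because $\vec x$ is a point of density for $f$. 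This completes the transposition case, and composing transpositions gives the general permutation.

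The main obstacle, and the only place requiring genuine care, is Key observation 2–3 and in particular handling the almost-everywhere-defined function $f^+_\mathcal{N}$ correctly: one needs that the $\pi$-change of variables is a measure-isomorphism of $\Omega^k$ (which is exactly the first Keisler axiom), that $f^+_\mathcal{N} \circ \pi = f^+_\mathcal{N}$ off a null set, and that a null set being integrated against inside shrinking neighborhoods doesn't cause trouble — but since we only ever assert the limit at the single point $\vec x$ and its images, and the measure-preservation is exact (not approximate), there is no real difficulty. Everything else is bookkeeping with the definitions.
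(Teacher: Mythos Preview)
Your proof is correct; the paper states this lemma without proof, treating it as an immediate consequence of the symmetry axioms of a Keisler graded probability space together with the definitions. Your argument spells out precisely those details.

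One minor point worth tightening: in Key observation 1 you justify $\mathcal{N}^j(\vec x^\pi)=(\mathcal{N}^j(\vec x))^\pi$ by calling $\mathcal{N}^j(\vec x)$ ``the unique cell containing $\vec x$'', which is literally accurate only when $\vec x\in\Omega^r$ with $r$ the arity of $\mathcal{N}$. In the general situation $f:\Omega^k\to[0,1]$ with $k\geq r$, the paper defines $\mathcal{N}^j(\vec x)$ as the cylinder intersection set $T_{{[k]\choose r}}(\{\mathcal{N}^j(\vec x_e)\}_{e\in{[k]\choose r}})$. The identity $\mathcal{N}^j(\vec x^\pi)=(\mathcal{N}^j(\vec x))^\pi$ still holds there: a permutation $\pi$ of $[k]$ permutes the collection of $r$-subsets, and the assumed symmetry of each $\mathcal{N}^j$ on $\Omega^r$ absorbs the reindexing within each $r$-subtuple. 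Once that is noted, your change-of-variables argument for $f^j_{\mathcal{N}}$, $f^+_{\mathcal{N}}$, and the density integral goes through verbatim.
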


In general, $f^+_{\mathcal{N}}$ is $\mathbb{E}(f\mid\mathcal{K}_{k,r}(\mathcal{N}^\sigma))$.  More precisely, $\mathbb{E}(f\mid\mathcal{K}_{k,r}(\mathcal{N}^\sigma))$ is only defined up to the $L^2$ norm, so $f^+_{\mathcal{N}}$ is a natural representative of $\mathbb{E}(f\mid\mathcal{K}_{k,r}(\mathcal{N}^\sigma))$.


\begin{lemma}\label{thm:almost_every_lebesgue}
  
  For any measurable function $f:\Omega^k\rightarrow[0,1]$, $f^+_{\mathcal{N}}$ is defined almost everywhere, $f^+_{\mathcal{N}}=\mathbb{E}(f\mid\mathcal{K}_{k,r}(\mathcal{N})^\sigma)$, almost every $x$ is a point of density of $x$, and almost every point $x$ with $f(x)>0$ is a positive point of density of $f$.
\end{lemma}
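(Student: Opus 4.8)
The plan is to recognize the maps $f \mapsto f^j_{\mathcal{N}}$ as conditional expectations onto the finite $\sigma$-algebras $\mathcal{K}_{k,r}(\mathcal{N}^j)$ generated by the partition $\mathcal{N}^j$ (lifted to $\Omega^k$ via the ${W\choose r}$-cylinder intersection construction), and then invoke the martingale convergence theorem. Concretely, let $\mathcal{A}^j = \mathcal{K}_{k,r}(\mathcal{N}^j)$; since $\mathcal{N}^{j+1}$ refines $\mathcal{N}^j$, the $\mathcal{A}^j$ form an increasing filtration, and $\bigvee_j \mathcal{A}^j = \mathcal{K}_{k,r}(\mathcal{N}^\sigma)$ (this needs a small argument: a cylinder intersection set with components in $\mathcal{N}^\sigma$ is, up to measure zero, a countable combination of cylinder intersection sets with components that are finite unions of $\mathcal{N}^j$-cells, using the third bullet of the definition of a system of neighborhoods). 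The key computation is that $f^j_{\mathcal{N}} = \mathbb{E}(f \mid \mathcal{A}^j)$ on the set where $\mu(\mathcal{N}^j(\vec x))>0$: the atoms of $\mathcal{A}^j$ restricted to tuples whose $r$-subtuples all land in positive-measure cells are exactly the sets $\mathcal{N}^j(\vec x_W) = T_{{W\choose r}}(\{\mathcal{N}^j(\vec x_e)\})$, and averaging $f$ over such an atom is precisely the definition of $f^j_{\mathcal{N}}$. The locus where some $r$-subtuple lands in a measure-zero cell has measure zero by the last bullet of the Keisler axioms (the fiber measures integrate correctly) together with the fact that $\lim_j \max_{P\in\mathcal{N}^j}\mu(P)=0$, so we may ignore it.

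Given this identification, Doob's martingale convergence theorem (for the uniformly bounded martingale $(f^j_{\mathcal{N}})_j$) gives that $f^j_{\mathcal{N}} \to f^+_{\mathcal{N}}$ a.e.\ and in $L^1$, and that $f^+_{\mathcal{N}} = \mathbb{E}(f \mid \bigvee_j \mathcal{A}^j) = \mathbb{E}(f \mid \mathcal{K}_{k,r}(\mathcal{N})^\sigma)$. This handles the first two assertions. For the third assertion---that almost every $\vec x$ is a point of density---apply the same martingale convergence result to the function $g := f^+_{\mathcal{N}}$ in place of $f$. Since $g$ is already $\mathcal{K}_{k,r}(\mathcal{N}^\sigma)$-measurable, and since $g$ is a.e.\ a limit of $\mathcal{A}^j$-measurable functions, one gets $g^j_{\mathcal{N}} \to g$ a.e. The point-of-density condition asks instead that $\frac{1}{\mu(\mathcal{N}^j(\vec x))}\int_{\mathcal{N}^j(\vec x)} |g(\vec y) - g(\vec x)|\,d\mu(\vec y) \to 0$; this is the statement that $\vec x$ is a Lebesgue point of $g$ with respect to the filtration $(\mathcal{A}^j)$. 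For a bounded function this follows by the standard trick: for each rational $q$, apply martingale convergence to $|g - q|$ to conclude that for a.e.\ $\vec x$, $\frac{1}{\mu(\mathcal{N}^j(\vec x))}\int_{\mathcal{N}^j(\vec x)}|g(\vec y)-q|\,d\mu \to |g(\vec x) - q|$; intersecting over all rationals $q$ (a countable set) and then, for a given $\vec x$, choosing $q$ within $\varepsilon$ of $g(\vec x)$ and using the triangle inequality gives $\limsup_j \frac{1}{\mu(\mathcal{N}^j(\vec x))}\int_{\mathcal{N}^j(\vec x)}|g(\vec y)-g(\vec x)|\,d\mu \le 2\varepsilon$ for all $\varepsilon$, hence the limit is $0$. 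For the fourth assertion, the set $\{x : f(x) > 0\}$ differs by measure zero from $\{x : f^+_{\mathcal{N}}(x) > 0\}$ only if $f$ is already $\mathcal{K}_{k,r}(\mathcal{N}^\sigma)$-measurable, which need not hold; instead we argue directly: on $\{f > 0\}$ we have $f \le f$ trivially, but what we need is that a.e.\ point of $\{f>0\}$ is a point of density of $f$ with $f^+_{\mathcal{N}} > 0$. The first part (point of density) is already established for a.e.\ $x$. For positivity of $f^+_{\mathcal{N}}$ on a.e.\ point of $\{f>0\}$: if $f^+_{\mathcal{N}} = 0$ on a positive-measure subset $Z \subseteq \{f>0\}$, then since $f^+_{\mathcal{N}} = \mathbb{E}(f\mid\mathcal{K}_{k,r}(\mathcal{N})^\sigma) \ge 0$ and $\{f^+_{\mathcal{N}} = 0\}$ is $\mathcal{K}_{k,r}(\mathcal{N})^\sigma$-measurable, integrating $f$ over $\{f^+_{\mathcal{N}}=0\}$ gives $\int_{\{f^+_{\mathcal{N}}=0\}} f = \int_{\{f^+_{\mathcal{N}}=0\}} f^+_{\mathcal{N}} = 0$, forcing $f = 0$ a.e.\ on $\{f^+_{\mathcal{N}}=0\}$, contradicting $\mu(Z) > 0$.

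The main obstacle I anticipate is the bookkeeping in the first step: verifying carefully that $f^j_{\mathcal{N}}$ genuinely equals $\mathbb{E}(f \mid \mathcal{A}^j)$ requires unwinding the definition of the lifted partition $\mathcal{N}^j(\vec x_W) = T_{{W\choose r}}(\{\mathcal{N}^j(\vec x_e)\}_{e})$ and checking that these sets really are the atoms of $\mathcal{A}^j$ (modulo the null set of degenerate tuples), and that the Keisler axiom giving $\mu_{k}(B) = \int \mu_{k-r}(B_{\vec x})\,d\mu_r$ can be applied iteratively to control the measure of the degenerate locus. There is also a minor subtlety in identifying $\bigvee_j \mathcal{A}^j$ with $\mathcal{K}_{k,r}(\mathcal{N}^\sigma)$ rather than something larger or smaller, which uses the density property in the definition of a system of neighborhoods together with closure of the generating cylinder-intersection operation under countable unions up to measure zero. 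None of this is deep, but it is where the care is needed; the convergence statements themselves are then immediate from standard martingale theory.
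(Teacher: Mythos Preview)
Your proof is correct and takes a somewhat different route from the paper's own argument. The paper proves the pointwise convergence $f^j_{\mathcal{N}}\to \mathbb{E}(f\mid\mathcal{K}_{k,r}(\mathcal{N}^\sigma))$ by hand: it first establishes $L^2$ convergence, then uses a direct level-set argument (approximating $\{g\le\alpha\}$ by an element of $\mathcal{K}_{k,r}(\mathcal{N}^{j_0})$ and bounding the measure of cells where the average exceeds $\beta>\alpha$) to upgrade to almost-everywhere convergence, and reuses this same level-set argument to obtain the point-of-density property. Your approach instead packages the identification $f^j_{\mathcal{N}}=\mathbb{E}(f\mid\mathcal{K}_{k,r}(\mathcal{N}^j))$ and then invokes Doob's martingale convergence theorem as a black box, followed by the standard rational-$q$ trick to get the Lebesgue-point property. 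The two arguments are morally the same (the paper is essentially reproving martingale convergence in this specific setting), but yours is cleaner and more transportable; the paper's has the modest virtue of being self-contained. Your final paragraph, on positivity of $f^+_{\mathcal{N}}$ almost everywhere on $\{f>0\}$, matches the paper's argument exactly. Two minor remarks: your appeal to the third bullet of the system-of-neighborhoods definition to identify $\bigvee_j\mathcal{A}^j$ with $\mathcal{K}_{k,r}(\mathcal{N}^\sigma)$ is slightly misdirected (that bullet concerns $\mathcal{D}^0$, not $\mathcal{N}^\sigma$), but the identification holds anyway by the standard fact that generators of $\mathcal{N}^\sigma$ are approximable in measure from $\bigcup_j\mathcal{N}^j$; and the measure-zero degenerate locus needs only Fubini, not the fourth bullet.
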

\begin{proof}

  We first show that the functions $f^j_{\mathcal{N}}$ converge in the $L_2$ norm to $\mathbb{E}(f\mid\mathcal{K}_{k,r}(\mathcal{N}^\sigma))$.  For any $\epsilon>0$, we may choose $j_0$ large enough that $||\mathbb{E}(f\mid\mathcal{K}_{k,r}(\mathcal{N}^{j_0}))-\mathbb{E}(f\mid\mathcal{K}_{k,r}(\mathcal{N}^\sigma))||_{L^2}<\epsilon$.  Then whenever $j\geq j_0$, $\mathcal{N}^j$ refines $\mathcal{Q}$ up to measure $0$, so also $||f^j_{\mathcal{N}}-\mathbb{E}(f\mid\mathcal{K}_{k,r}(\mathcal{N}^\sigma))||_{L^2}<\epsilon$.

  To see that the pointwise limit is defined almost everywhere and that almost every point is a point of density, consider any $\epsilon>0$ and $\alpha<\beta$.  Let $g=\mathbb{E}(f\mid\mathcal{K}_{k,r}(\mathcal{N}^\sigma))$.  Choose $j_0$ large enough so that there is a set $S\in\mathcal{K}_{k,r}(\mathcal{N}^{j_0})$ so that $\mu(S\bigtriangleup \{\vec x\mid g(\vec x)\leq\alpha\})<\frac{\beta-\alpha}{1-\alpha}\epsilon$.  

  Consider all rectangles $R$ from $\bigcup_j\mathcal{N}^j$ which are contained in $S$ and such that the average of $f$ on $R$ is $\geq\beta$.  Since $\int_R g\,d\mu\geq \beta\mu(R)$ and $g\leq 1$, we must have $\{\vec x\in R\mid g(\vec x)>\alpha\}\geq\frac{\beta-\alpha}{1-\alpha}\mu(R)$, and therefore $\mu(R)<\epsilon$.    Therefore, once $j\geq j_0$, except for a set of measure $\epsilon$, if $g(\vec x)\leq\alpha$ then for all $j\geq j_0$, $f^j_{\mathcal{N}}(\vec x)\leq\alpha$ as well.  So the set of points with $g(\vec x)\leq\alpha$ but $\limsup f^j_{\mathcal{N}}(\vec x)>\alpha$ has measure $<\epsilon$.  Dually, we can show that the set of points with $g(\vec x)\geq\beta$ but $\liminf f^j_{\mathcal{N}}(\vec x)<\beta$ has measure $<\epsilon$.  Since this holds for all $\alpha,\beta$ and all $\epsilon$, for almost all $\vec x$ we have $f^+_{\mathcal{N}}(\vec x)=\lim f^j_{\mathcal{N}}(\vec x)=g(\vec x)$.

By the same argument, for any $\alpha$ and any $\delta>0$ we see that when $f^+_{\mathcal{N}}(\vec x)\leq\alpha$, except for a set of measure $<\epsilon$, for all sufficiently large $j$ we have $f^j_{\mathcal{N}}(\vec x)\leq\alpha+\delta$, and therefore since $\frac{1}{\mu(\mathcal{N}^j(\vec x))}\int_{\mathcal{N}^j(\vec x)}f(\vec x)\,d\mu=\frac{1}{\mu(\mathcal{N}^j(\vec x))}\int_{\mathcal{N}^j(\vec x)}f^+_{\mathcal{N}}(\vec x)\,d\mu$, the set of $\vec y\in\mathcal{N}^j(\vec x)$ with $f^+_{\mathcal{N}}(\vec y)\geq\alpha+\delta$ is small.  So almost every $\vec x$ is a point of density.

Finally, to see that almost every point with $f(x)>0$ has $f^+_{\mathcal{N}}(x)>0$, let $Z$ be the set of points where $f^+_{\mathcal{N}}(\vec x)=0$.  Since $f^+_{\mathcal{N}}$ is $\mathcal{K}_{k,r}(\mathcal{N})$-measurable, $Z$ belongs to the completion of $\mathcal{K}_{k,r}(\mathcal{N})$, so $0=\int_Zf^+\,d\mu=\int_Z f\,d\mu$, so the set of $\vec x\in Z$ where $f(\vec x)>0$ has measure $0$.
\end{proof}

\subsection{Counting and Graph Removal}

The next fact we need is that the quantity $t_S(\{f_e\}_{e\in S})$ depends only on the ``non-random'' part of the $f_e$.  In its simplest form, this says that if $E$ is a graph, $t_S(E)=t_S(\mathbb{E}(E\mid\mathcal{B}_{2,1}))$---that is, we can replace the graph $E$ with the function $\mathbb{E}(E\mid\mathcal{B}_{2,1})$ measuring the density of $E$ when counting graph densities.\footnote{In the graphon approach, this fact plays a central role: the object $\mathbb{E}(E\mid\mathcal{B}_{2,1})$ \emph{is} the graphon, and the basic theorems establish that for things like counting graph densities, this is all that is needed.}

We will state this fact in a very general way which will continue to serve us as we deal with $k$-graphs.

\begin{lemma}\label{thm:reduction}
  Let $\{f_e\}_{e\in S}$ be given and, for each $e$, let $\mathcal{D}_e$ be a $\sigma$-algebra of sets of $r$-tuples such that, for every $e_0\in S$, $|e_0|\geq r$ and either:
  \begin{itemize}
  \item $f_{e_0}$ is $\mathcal{D}_{e_0}$-measurable, or
  \item for every $e\in S\setminus\{e_0\}$ and each fixed $\vec x_{e\setminus e_0}$, the function $\vec x_{e_0}\mapsto f_e(\vec x_e)$ is $\mathcal{D}_{e_0}$-measurable.
  \end{itemize}
For each $e$, let $f'_e=\mathbb{E}(f_e\mid \mathcal{D}_e)$.  Then $t_S(\{f_e\})=t_S(\{f'_e\})$.
\end{lemma}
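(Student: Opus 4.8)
The plan is to pass from $\{f_e\}_{e\in S}$ to $\{f'_e\}_{e\in S}$ one component at a time, checking that each such replacement leaves the integral $t_S(\{f_e\})=\int\prod_{e\in S}f_e(\vec x_e)\,d\mu$ unchanged. For an $e_0$ satisfying the first alternative we have $f'_{e_0}=\mathbb E(f_{e_0}\mid\mathcal D_{e_0})=f_{e_0}$, so nothing happens there; all the content is in the $e_0$'s satisfying the second alternative.

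First I would isolate the single replacement step. Fix an $e_0$ such that for every $e\in S\setminus\{e_0\}$ the function $\vec x_{e_0}\mapsto f_e(\vec x_e)$ is $\mathcal D_{e_0}$-measurable — read, when $|e_0|>r$, as measurability with respect to the natural cylinder lift $\mathcal K_{e_0,r}(\mathcal D_{e_0})$ of $\mathcal D_{e_0}$ to $\Omega^{e_0}$. Using the disintegration axiom of a Keisler graded probability space, together with permutation invariance to put the $e_0$-coordinates innermost, I would write
\[t_S(\{f_e\})=\int\Big(\int f_{e_0}(\vec x_{e_0})\prod_{e\in S\setminus\{e_0\}}f_e(\vec x_e)\,d\mu_{e_0}(\vec x_{e_0})\Big)\,d\mu,\]
where the outer integral is over the coordinates outside $e_0$, which are held fixed in the inner one. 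For fixed outside coordinates the product $\prod_{e\neq e_0}f_e(\vec x_e)$ is a $\mathcal K_{e_0,r}(\mathcal D_{e_0})$-measurable function of $\vec x_{e_0}$, so the defining property of conditional expectation lets us replace $f_{e_0}$ in the inner integral by $\mathbb E(f_{e_0}\mid\mathcal D_{e_0})=f'_{e_0}$; reassembling gives $t_S(\{f_e\})=t_S(\{\dots,f'_{e_0},\dots\})$.

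Then I would iterate this, running through the second-alternative components $e_0$ in any order. After $f_{e_0}$ has been replaced by $f'_{e_0}=\mathbb E(f_{e_0}\mid\mathcal D_{e_0})$, the new function is $\mathcal D_{e_0}$-measurable, so $e_0$ now falls under the first alternative and is never touched again; hence the procedure terminates after finitely many steps with all components replaced. What must be verified at each step — and this is where I expect the real work to be — is that the single-step hypothesis still holds for the components not yet processed: for such an $e_1$ one needs $\vec x_{e_1}\mapsto f'_{e_0}(\vec x_{e_0})$ to remain $\mathcal D_{e_1}$-measurable for every already-processed $e_0$. I would obtain this from the fact that $f'_{e_0}$ is an $L^1$-limit of finite sums of products $\prod_{e'}\chi_{A_{e'}}(\vec x_{e'})$ with $e'\in{e_0\choose r}$ and $A_{e'}\in\mathcal D_{e_0}$: the fiberwise $\mathcal D_{e_1}$-measurability of the original $f_{e_0}$ should be inherited by each such approximant and pass to the limit, so that $\mathbb E(\cdot\mid\mathcal K_{e_0,r}(\mathcal D_{e_0}))$ preserves the property. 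Making this inheritance argument precise — in particular keeping track of the cylinder lifts $\mathcal K_{\cdot,r}(\cdot)$ and of the overlaps $e_0\cap e_1$, where one must use whatever compatibility the $\mathcal D_e$'s enjoy in the intended applications — is the delicate bookkeeping; everything else reduces to the one-line Fubini-plus-conditional-expectation computation above.
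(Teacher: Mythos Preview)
Your approach is essentially the paper's: both proceed by induction on the set $T\subseteq S$ of already-replaced indices, handling one $e_0$ at a time via Fubini and the defining property of conditional expectation. The inheritance issue you flag as the ``real work''---whether the already-replaced $f'_e$'s still have $\mathcal{D}_{e_0}$-measurable fibers---is simply asserted in the paper's inductive step (``each term in the product is $\mathcal{D}_{e_0}$-measurable'') without further justification, so you are being more careful here rather than diverging from the paper's route.
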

The general form allows the case where $S$ contains tuples of different sizes, and replaces $\mathcal{B}_{2,1}$ with a more general $\sigma$-algebra which may depend on the coordinate $e$; most commonly, we will have $\mathcal{D}_e=\mathcal{K}_{e,r}(\mathcal{D})$ for a fixed $\sigma$-algebra $\mathcal{D}$.

We need some requirement that $\mathcal{D}_e$ is large enough.  For example, when we turn to $3$-graphs, we might initially try $\mathcal{D}_e=\mathcal{B}_{3,1}$ while $S\subseteq {W\choose 3}$.  Working only with $\mathcal{B}_{3,1}$ amounts to working with weak hypergraph regularity \cite{MR1099803}, which is known to suffice when $S$ is \emph{linear}---that is, when $|e\cap e'|\leq 1$ for any distinct $e,e'\in S$ \cite{MR2595699,MR2864650}.  But when the elements of $S$ can overlap more generally, we need to work with a larger $\sigma$-algebra, for instance $\mathcal{B}_{3,2}$.  This is precisely what the second case of the lemma requires: that the ``overlaps'' with the other functions is already measurable with respect to $\mathcal{D}_{e_0}$.
\begin{proof}
We show by induction on $|T|$, where $T\subseteq S$,
that
\[t_S(\{A_e\})=\int\prod_{e\in T}f'_e(\vec x_e)\prod_{e\in S\setminus T}f_e(\vec x_e)\,d\mu.\]
When $T=S$, this gives the desired claim.

When $T=\emptyset$, the statement is trivial.

Suppose the inductive hypothesis holds for $T$ and that $e_0\in S\setminus T$.  Then we have
\begin{align*}
  t_S(\{f_e\})
  &=\int f_{e_0}(\vec x_{e_0})\prod_{e\in T}f'_{e}(\vec x_{e})\prod_{e\in S\setminus T\cup\{e_0\}}f_{e}(\vec x_{e})\,d\mu.
\end{align*}
For a fixed $\vec x_{W\setminus e_0}$, consider the function
\[h(\vec x_{e_0})=\prod_{e\in T}f'_{e}(\vec x_{e})\prod_{e\in S\setminus T\cup\{e_0\}}f_{e}(\vec x_{e}).\]
Each term in the product is $\mathcal{D}_{e_0}$-measurable, so $h$ is $\mathcal{D}_{e_0}$-measurable as well.  Therefore
\begin{align*}
  t_S(\{f_e\})
  &=\int\mathbb{E}(f_{e_0}\mid \mathcal{K}_{e_0,r}(\mathcal{D}))(\vec x_{e_0})\prod_{e\in T}f'_{e}(\vec x_{e})\prod_{e\in S\setminus T\cup\{e_0\}}f_{e}(\vec x_{e})\,d\mu\\
  &=\int f'_e(\vec x_e)\prod_{e'\in T}f'_{e'}(\vec x_{e'})\prod_{e'\in S\setminus T\cup\{e\}}f'_{e'}(\vec x_{e'})\,d\mu
\end{align*}
which gives the inductive claim.
\end{proof}

The next result should be seen as our version of the graph counting lemma.  Typically, a graph counting lemma would say something like the following:
\begin{quote}
  Suppose $S\subseteq{W\choose 2}$ and that for each $w\in W$, we have a set $P_w\subseteq\Omega$ such that, for each pair $(w,w')\in S$, $\frac{1}{\mu(P_w\times P_{w'})}\int_{P_w\times P_{w'}}f_{\{w,w'\}}\,d\mu>\epsilon$, and also $f_{\{w,w'\}}$ is suitably quasi-random between $P_w$ and $P_{w'}$.  Then $t_S(\{f_e\})>0$.
\end{quote}
In our setting, we are able to ``take the limit'' as the size of the sets $P_w$ approaches $0$: instead of sets $P_w$, we will be able to work with individual points $x_w$ (and, therfore, sufficiently small neighborhoods $\mathcal{N}^j(x_w)$).  The requirement that $f_{\{w,w'\}}$ be suitably quasi-random becomes the requirement that $(x_w,x_{w'})$ be a points of density, and the requirement that $f_{\{w,w'\}}$ have positive density becomes the requirement that $f^+_{\{w,w'\}}(x_w,x_{w'})>0$.

This result is the first place we restrict ourselves to the graph case---that is, to requiring that $\mathcal{N}$ be a system of neighborhoods with arity $1$---since the hypergraph version requires more work.

\begin{theorem}\label{thm:graph_counting}
  Let $W$ be a finite set, let $\mathcal{N}$ be a system of neighborhoods of arity $1$, and let $S$ be a collection of subsets of $W$. 
  Suppose that, for each $e\in S$, either:
  \begin{itemize}
  \item $f_e$ is $\mathcal{K}_{e,1}(\mathcal{N})$-measurable, or
  \item for every $e'\in S\setminus\{e\}$, the function $\vec x_e\mapsto \int f_{e'}(\vec x_{e\cup e'})\,d\mu(\vec x_{e'\setminus e})$ is $\mathcal{K}_{e,1}(\mathcal{N})$-measurable.
  \end{itemize}

  Suppose that $\vec x_W\in T_S(\{f_e\})$ is such that, for each $e\in S$, $\vec x_e$ is a positive point of density of $f_e$.  Then $t_S(\{f_e\})>0$.
\end{theorem}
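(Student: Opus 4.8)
The plan is to localize to shrinking ``boxes'' around $\vec x_W$ and to induct on $|S|$, peeling off one $e\in S$ at a time.

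First I would discard every coordinate $w\in W$ lying in no $e\in S$ (such a $w$ only contributes a factor $1$ to each of the integrals below), so that $\bigcup S=W$. For each $j$ set $B^j_W:=T_{{W\choose 1}}(\{\mathcal N^j(x_w)\}_{w\in W})$, which by the product form of arity-$1$ cylinder sets equals $\prod_{w\in W}\mathcal N^j(x_w)$; more generally write $B^j_e:=\prod_{w\in e}\mathcal N^j(x_w)=\mathcal N^j(\vec x_e)$. Since each $\vec x_e$ is a point of density of $f_e$, the average $f_e{}^j_{\mathcal N}(\vec x_e)$ is defined, so $\mu(B^j_e)>0$; as $B^j_e$ is a product, $\mu(\mathcal N^j(x_w))>0$ for every $w$ and hence $\mu(B^j_W)>0$. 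Because the $f_e$ are nonnegative, $t_S(\{f_e\})\ge\int_{B^j_W}\prod_{e\in S}f_e(\vec x_e)\,d\mu$ for every $j$, so it suffices to prove the formally stronger statement $\liminf_j\frac1{\mu(B^j_W)}\int_{B^j_W}\prod_{e\in S}f_e(\vec x_e)\,d\mu>0$, i.e.\ that $\vec x_W$ is a positive point of density of $T_S(\{f_e\})$. I would prove this by induction on $|S|$, the case $S=\emptyset$ being trivial; note that for $S'\subseteq S$ the hypotheses (first/second alternative) and the point-of-density data are inherited, so the inductive hypothesis applies to any proper $S'$ after again discarding uncovered coordinates.

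For the inductive step fix $e_0\in S$. Using $B^j_W=B^j_{e_0}\times B^j_{W\setminus e_0}$, Fubini, and the product form of $\mu$,
\[
\frac1{\mu(B^j_W)}\int_{B^j_W}\prod_{e\in S}f_e(\vec x_e)\,d\mu
=\frac1{\mu(B^j_{e_0})}\int_{B^j_{e_0}}f_{e_0}(\vec x_{e_0})\,F^j(\vec x_{e_0})\,d\mu,
\]
where $F^j(\vec x_{e_0}):=\frac1{\mu(B^j_{W\setminus e_0})}\int_{B^j_{W\setminus e_0}}\prod_{e\in S\setminus\{e_0\}}f_e\,d\mu$ satisfies $0\le F^j\le1$ and, by a second application of Fubini, $\frac1{\mu(B^j_{e_0})}\int_{B^j_{e_0}}F^j\,d\mu=\frac1{\mu(B^j_W)}\int_{B^j_W}\prod_{e\in S\setminus\{e_0\}}f_e(\vec x_e)\,d\mu\ge c_0>0$ for all large $j$ by the inductive hypothesis. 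Let $\phi_{e_0}:=(f_{e_0})^+_{\mathcal N}$, a version of $\mathbb E(f_{e_0}\mid\mathcal K_{e_0,1}(\mathcal N)^\sigma)$ by Lemma~\ref{thm:almost_every_lebesgue}; since $\vec x_{e_0}$ is a \emph{positive} point of density we have $\phi_{e_0}(\vec x_{e_0})>0$ and $\frac1{\mu(B^j_{e_0})}\int_{B^j_{e_0}}|\phi_{e_0}-\phi_{e_0}(\vec x_{e_0})|\,d\mu\to0$. If $f_{e_0}$ is $\mathcal K_{e_0,1}(\mathcal N)$-measurable (the first alternative), then $f_{e_0}=\phi_{e_0}$, and combining $0\le F^j\le1$ with the two preceding facts the right-hand side above tends to $\phi_{e_0}(\vec x_{e_0})\cdot\lim_j\frac1{\mu(B^j_{e_0})}\int_{B^j_{e_0}}F^j\,d\mu\ge\phi_{e_0}(\vec x_{e_0})c_0>0$, completing this case.

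If only the second alternative holds for $e_0$, split $f_{e_0}=\phi_{e_0}+(f_{e_0}-\phi_{e_0})$; the $\phi_{e_0}$-part again contributes $\phi_{e_0}(\vec x_{e_0})c_0+o(1)>0$. For the cross term $\frac1{\mu(B^j_{e_0})}\int_{B^j_{e_0}}(f_{e_0}-\phi_{e_0})F^j\,d\mu$, note that $f_{e_0}-\phi_{e_0}$ is orthogonal to every $\mathcal K_{e_0,1}(\mathcal N)$-measurable function and that $\chi_{B^j_{e_0}}$ is such a function, so $F^j$ may be replaced by its quasirandom part $F^j-\mathbb E(F^j\mid\mathcal K_{e_0,1}(\mathcal N))$ without changing the cross term. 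Then I would unwind $F^j$ as an average of $\prod_{e\neq e_0}f_e$ and continue peeling the remaining edges: each resulting error term, by Lemma~\ref{thm:reduction}-type manipulations together with the second-alternative hypothesis for $e_0$ (the shadows $\vec x_{e_0}\mapsto\int f_e(\vec x_{e_0\cup e})\,d\mu(\vec x_{e\setminus e_0})$ being $\mathcal K_{e_0,1}(\mathcal N)$-measurable), is a $\mathcal K_{e_0,1}(\mathcal N)$-orthogonal function multiplied by a product of such shadows, hence vanishes, leaving only nonnegative ``diagonal'' (squared-norm) terms; a defect-Cauchy--Schwarz estimate then shows the cross term is asymptotically nonnegative, so the right-hand side stays $\ge\phi_{e_0}(\vec x_{e_0})c_0>0$.

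The delicate point, and the step I expect to be the main obstacle, is precisely this last one. The cross term genuinely need \emph{not} vanish: with $W=\{1,2,3,4\}$, $S={W\choose 3}$, and each $f_e$ built from a quasirandom $2$-colouring $c$ of pairs by letting $f_{\{a,b,d\}}(x_a,x_b,x_d)$ be $1$ when $c(x_a,x_b)+c(x_a,x_d)+c(x_b,x_d)$ is odd and $0$ otherwise, all shadows are (nearly) constant, so the second alternative holds, yet $t_S(\{f_e\})=\tfrac18\neq\tfrac1{16}=t_S(\{\phi_e\})$ --- no reduction of $f_e$ to its vertex-density is possible. One must instead rule out a negative limit for the cross term; this is the present incarnation of the familiar fact that counting lemmas for quasirandom hypergraphs require the lower-order (here: shadow) data to be regular, and the second alternative of the hypothesis is exactly what supplies that regularity. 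Everything else --- the localization, the $\bigcup S=W$ reduction, the structured case, and the bookkeeping with shadows --- is routine given Lemmas~\ref{thm:almost_every_lebesgue} and \ref{thm:reduction}.
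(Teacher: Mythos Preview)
The paper's argument is far shorter than yours and does not proceed by induction on $|S|$ at all. It applies Lemma~\ref{thm:reduction} once, globally, with $\mathcal D_e=\mathcal K_{e,1}(\mathcal N^\sigma)$, to replace every $f_e$ by $f^+_e$ simultaneously, obtaining $t_S(\{f_e\})=t_S(\{f^+_e\})$. Then, using only that each $\vec x_e$ is a point of density, it picks a single $j$ so that on the box $\mathcal N^j(\vec x_W)=\prod_w\mathcal N^j(x_w)$ each set $\{\vec y_e:f^+_e(\vec y_e)\ge\varepsilon/2\}$ occupies at least a $1-1/|S|$ fraction; a union bound then gives $t_S(\{f^+_e\})\ge(\varepsilon/2)^{|S|}(1-\gamma)>0$. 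No peeling, no cross terms, no Cauchy--Schwarz.

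You have, however, put your finger on a genuine imprecision in the paper. The second alternative in the theorem is stated with an \emph{integral} over $\vec x_{e'\setminus e}$, whereas Lemma~\ref{thm:reduction} needs each \emph{slice} $\vec x_{e_0}\mapsto f_{e'}(\vec x_{e'})$ (for fixed $\vec x_{e'\setminus e_0}$) to be $\mathcal D_{e_0}$-measurable. Your parity example on $W=[4]$, $S={W\choose 3}$ is exactly right: the integral condition holds, the slice condition fails, and indeed $t_S(\{f_e\})=1/8\neq 1/16=t_S(\{f^+_e\})$, so the equality the paper invokes is false under the hypothesis as literally written. In every use the paper makes of the theorem the stronger slice condition is available, so the intended reading is the slice version matching Lemma~\ref{thm:reduction}.

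Under that intended hypothesis your whole inductive machinery is unnecessary. Under the literal (weaker) hypothesis your proposal is incomplete: the step you yourself flag as the obstacle---that the cross term $\frac{1}{\mu(B^j_{e_0})}\int_{B^j_{e_0}}(f_{e_0}-\phi_{e_0})F^j\,d\mu$ is asymptotically nonnegative---is not established. Your sketch (``unwind $F^j$, orthogonality leaves only nonnegative diagonal terms, defect-Cauchy--Schwarz'') does not go through as stated, because after one subtraction the remaining product $\prod_{e\neq e_0}f_e$ restricted to a slice of $\vec x_{e_0}$ is still not $\mathcal K_{e_0,1}(\mathcal N)$-measurable, so further orthogonality is unavailable; and a Cauchy--Schwarz step would bound the magnitude of the cross term, not its sign. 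Read the hypothesis as the slice version and use the paper's direct argument.
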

The basic idea of the proof is that we may ``blow up'' each individual point $x_w$ into a small ball $\mathcal{N}^j(x_w)$, and then use the fact that each $\vec x_e$ is a point of density $f_e$ to find many copies of $W$ between these small balls.
\begin{proof}
Choose some $\epsilon\leq\min_{e\in S}f^+_e(\vec x_e)$.

Since each $\vec x_e$ is a point of density, we may choose some $j$ large enough that, for each $e\in S$,
\[\frac{1}{\mu(\mathcal{N}^j(\vec x_e))}\mu(\{\vec y_e\in\mathcal{N}^j(\vec x_e)\mid f^+_e(\vec y_e)\geq\epsilon/2\})\geq 1-\frac{1}{|S|}.\]
Therefore also
\[\frac{1}{\mu(\mathcal{N}^j(\vec x_W))}\mu(\{\vec y_W\in\mathcal{N}^j(\vec x_W)\mid f^+_e(\vec y_e)\geq\epsilon/2\})\geq 1-\frac{1}{|S|}.\]
Note that this depends on the fact that the arity of $\mathcal{N}$ is $1$, because this ensures that $\mathcal{N}^j(\vec x_W)=\mathcal{N}^j(\vec x_e)\times\mathcal{N}^j(\vec x_{W\setminus e})$.

Therefore
\[\gamma=\mu(\{\vec y_W\in\mathcal{N}^j(\vec x_W)\mid \text{ there is some }e\in S\text{ such that }f^+_e(\vec y_e)<\epsilon/2\})<|S|\frac{1}{|S|}=1,\]
so, using Lemma \ref{thm:reduction} (with $\mathcal{D}_e=\mathcal{B}_{2,1}$ for all $e$),
\[t_S(\{f_e\})=t_S(\{f^+_e\})\geq\frac{\epsilon^{|S|}}{2^{|S|}}(1-\gamma)>0.\]
\end{proof}

\begin{theorem}[Graph Removal]\label{thm:graph_removal}
  Suppose $H=(W,F)$ is a finite graph and $G=(\Omega,E)$ is a graph with a countably approximated atomless Keisler graded probability space on $\Omega$ with $E\in\mathcal{B}^0_2$.  If $t_H(G)=0$ then there is a symmetric $E'\subseteq E$ such that $E\setminus E'$ is a measure $0$ set contained in an intersection of sets in $\mathcal{B}^0_2$ and, taking $G'=(\Omega,E')$, $T_H(G')=\emptyset$.
\end{theorem}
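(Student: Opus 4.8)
The plan is to pass to the density function $f = \mathbb{E}(\chi_E \mid \mathcal{B}_{2,1})$ and remove exactly the edges sitting at points where this density vanishes, then use the counting lemma (Theorem~\ref{thm:graph_counting}) to see that no copies of $H$ survive. First, fix a system of neighborhoods $\mathcal{N}$ of arity $1$ in $\mathcal{B}_1^0$; such a system exists because $\{(\Omega^k,\mathcal{B}_k,\mu_k)\}$ is countably approximated, so $\mathcal{B}_1^0$ is a countable algebra and we can build an increasing sequence of finite partitions generating it with mesh going to $0$ (using atomlessness to split cells). By Lemma~\ref{thm:almost_every_lebesgue}, $\chi_E^+_{\mathcal{N}} = \mathbb{E}(\chi_E \mid \mathcal{K}_{2,1}(\mathcal{N}^\sigma))$ is defined almost everywhere; I will actually want $\mathcal{N}^\sigma$ rich enough that $\mathcal{K}_{2,1}(\mathcal{N}^\sigma)$ captures $\mathcal{B}_{2,1}$ restricted to the relevant sets — more simply, I will just work with the function $g := \chi_E^+_{\mathcal{N}}$ directly. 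Let $Z = \{\vec x \in \Omega^2 : g(\vec x) = 0\}$, a set in the completion of $\mathcal{K}_{2,1}(\mathcal{N}^\sigma)$, and set $E' = E \setminus Z$, symmetrized (intersecting with its transpose, which only removes more of a measure-zero set since $Z$ can be taken symmetric by the symmetry clause). By the last paragraph of the proof of Lemma~\ref{thm:almost_every_lebesgue}, $\mu(E \cap Z) = \int_Z \chi_E \, d\mu = \int_Z g\, d\mu = 0$, so $E \setminus E'$ is a measure-$0$ set; and since $Z$ lies in the completion of a $\sigma$-algebra generated by cylinder sets with components in $\mathcal{B}_1^0 \subseteq \mathcal{B}_2^0$, one checks $Z$ is contained in a countable intersection of sets from $\mathcal{B}_2^0$ (using the last clause of ``countably approximated'' to get level sets of $g$ inside $\mathcal{B}_2^0$).

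Next I need $E' \in \mathcal{B}_2^0$, or at least $E'$ measurable with the right structure; actually the statement only asks that $E\setminus E'$ be contained in an intersection of $\mathcal{B}_2^0$ sets, so I take $E'$ to be $E$ minus exactly that intersection, which is fine. The key remaining point is that $T_H(G') = \emptyset$. Suppose toward a contradiction that $\vec x_W \in T_H(G') = T_F(\{E'\})$, so $\vec x_e \in E'$ for every $e \in F$. For each such $e$, since $\vec x_e \in E' \subseteq E \setminus Z$ we have $g(\vec x_e) = \chi_E^+_{\mathcal{N}}(\vec x_e) > 0$. Now here is the crux: I want each $\vec x_e$ to be a \emph{positive point of density} of $\chi_E$, not merely a point with positive $g$-value. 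By Lemma~\ref{thm:almost_every_lebesgue}, almost every point is a point of density, and almost every point with $g > 0$ is a positive point of density — but ``almost every'' is not ``every'', so I cannot immediately conclude for this particular $\vec x_W$. The fix is the standard one: instead of removing $Z$, remove $Z' = Z \cup (\text{the null set of non-points-of-density})$; since that added null set is itself (up to completion) measurable with respect to $\mathcal{K}_{2,1}(\mathcal{N}^\sigma)$ and hence — this needs a small argument — contained in an intersection of $\mathcal{B}_2^0$ sets, I can absorb it. Actually, more carefully: the set of points that fail to be points of density may not obviously be $\mathcal{K}_{2,1}$-measurable. So instead I should phrase the contradiction measure-theoretically, which is the approach I'll take.

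So, the clean argument: let $E'' = E \setminus Z$ as above (measure-zero removal), and let $N$ be the null set of pairs that are either not points of density of $\chi_E$ or are points of density with $g = 0$ but $\chi_E = 1$ — all of these are null by Lemma~\ref{thm:almost_every_lebesgue}. I then want to further delete from $E''$ a measure-zero set containing $N$ and contained in an intersection of $\mathcal{B}_2^0$ sets. Since $N$ is contained in the symmetric difference region controlled by the level-set clause of countable approximation applied to $g$ at rational thresholds, together with the $L^2$-convergence of $\chi_E^j_{\mathcal{N}}$ giving $\mathcal{B}_2^0$-approximants to the oscillation sets, one gets such a covering; this is the routine-but-tedious bookkeeping step. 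Having deleted it, every $\vec x_e \in E'$ with $e \in F$ is now genuinely a positive point of density of $\chi_E$. Apply Theorem~\ref{thm:graph_counting} with $S = F$ and $f_e = \chi_E$ for all $e$ (the hypothesis is met trivially by the first bullet once we note $\chi_E$ can be taken $\mathcal{K}_{e,1}(\mathcal{N})$-measurable after the approximation, or by a direct check): we conclude $t_F(\{\chi_E\}) = t_H(G) > 0$, contradicting $t_H(G) = 0$. Hence $T_H(G') = \emptyset$.

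\textbf{Main obstacle.} The genuinely delicate part is not the counting — that is handed to us by Theorem~\ref{thm:graph_counting} — but arranging that the edges we delete form a set that is simultaneously (i) measure zero, (ii) contained in an intersection of $\mathcal{B}_2^0$ sets, and (iii) large enough that every surviving edge is a \emph{positive point of density} (so that the counting lemma applies pointwise to an arbitrary would-be copy of $H$, not just almost every copy). Reconciling the ``$\mathcal{B}_2^0$-definable'' requirement with the ``contains a particular null set of bad points'' requirement is exactly the kind of thing the ``countably approximated'' hypothesis is designed to handle, via its level-set clause, and executing that covering argument carefully is where the real work lies.
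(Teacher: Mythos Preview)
Your approach is the same as the paper's, but you have made it harder on yourself and introduced one real confusion along the way.

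The paper defines $E'$ in a single stroke: $E'$ is the set of positive points of density of $E$ in $\mathcal{N}$. By Lemma~\ref{thm:almost_every_lebesgue}, almost every point of $E$ is a positive point of density, so $\mu(E\setminus E')=0$; and by construction every surviving edge is a positive point of density, so Theorem~\ref{thm:graph_counting} applies immediately to any $\vec x_W\in T_H(E')$. Your two-step definition (first remove $Z=\{g=0\}$, then realize you must also remove the non-density points and patch) arrives at exactly the same $E'$, just with extra worry. The ``main obstacle'' you flag is not really an obstacle: whether a pair is a (positive) point of density depends only on the cells $\mathcal{N}^j(\vec x)$ it lies in, so the set of non-positive-points-of-density is $\mathcal{K}_{2,1}(\mathcal{N}^\sigma)$-measurable; intersecting with $E$ gives a measure-$0$ set, and since $\mathcal{N}$ is built from $\mathcal{B}_1^0$ this set is contained in an intersection of finite unions of rectangles from $\mathcal{B}_1^0$, hence in an intersection of $\mathcal{B}_2^0$ sets. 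No delicate level-set covering is needed.

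The one genuine slip is your verification of the hypotheses of Theorem~\ref{thm:graph_counting}. You write that ``the hypothesis is met trivially by the first bullet once we note $\chi_E$ can be taken $\mathcal{K}_{e,1}(\mathcal{N})$-measurable.'' This is false: $\chi_E$ is \emph{not} $\mathcal{K}_{e,1}(\mathcal{N})$-measurable unless $E$ has no quasirandom part. What holds is the \emph{second} bullet: for distinct $e,e'\in F\subseteq{W\choose 2}$ one has $|e\cap e'|\le 1$, so $\vec x_e\mapsto\int\chi_E(\vec x_{e\cup e'})\,d\mu(\vec x_{e'\setminus e})$ depends on at most one coordinate, and this one-variable function is $\mathcal{N}^\sigma$-measurable precisely because of the level-set clause in the definition of ``countably approximated'' (this is where that hypothesis is actually used, and why one chooses $\mathcal{N}$ so that every set in $\mathcal{B}_1^0$ is a union of cells).
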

\begin{proof}
  Choose $\mathcal{N}_1$ so that every set in $\mathcal{B}_1^0$ is a finite union of sets in $\bigcup_j\mathcal{N}_1^j$.  Let $E'\subseteq E$ consist of the positive points of density of $E$.  By Lemma \ref{thm:almost_every_lebesgue}, $\mu(E\setminus E')=0$.  If $T_H(E')\neq\emptyset$ then any $\vec x_W\in T_H(E')$ satisfies the conditions of the previous lemma, and so $t_H(E)>0$.

  Since $E\in\mathcal{B}^0_2$ and $E\setminus E'$ is contained in an intersection of finite unions of rectangles from $\mathcal{B}_1^0$, $E\setminus E'$ is contained in an intersection of sets in $\mathcal{B}^0_2$.
\end{proof}

\begin{cor}
  For every finite graph $H=(W,F)$ and every $\epsilon>0$ there is a $\delta>0$ so that whenever $G=(V,E)$ is a graph with $t_H(G)<\delta$, there is a symmetric $E'\subseteq E$ with $|E\setminus E'|<\epsilon|V|^2$ such that, taking $G'=(V,E')$, $T_H(G')=\emptyset$.
\end{cor}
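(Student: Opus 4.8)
The plan is the standard compactness/ultraproduct transfer from the infinitary Theorem \ref{thm:graph_removal} to the finite statement, exactly the kind of argument indicated in the introduction (see \cite{goldbring:_approx_logic_measure}). Fix the finite graph $H=(W,F)$ and $\epsilon>0$ and suppose, for contradiction, that no $\delta>0$ works: then for each $n$ there is a finite graph $G_n=(V_n,E_n)$ with $t_H(G_n)<1/n$ but such that no symmetric $E'_n\subseteq E_n$ with $|E_n\setminus E'_n|<\epsilon|V_n|^2$ satisfies $T_H(V_n,E'_n)=\emptyset$. The corollary is trivial when the vertex sets stay bounded (for a fixed $N$ there are only finitely many graphs on $N$ vertices, so $\delta:=\min\{t_H(G)\mid |V(G)|\le N,\ t_H(G)>0\}$ works), so I may assume $|V_n|\to\infty$ along the relevant index set.

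Next I would pass to an ultraproduct. Fix a nonprincipal ultrafilter $\mathcal{U}$ on $\mathbb{N}$, work in a language containing a binary relation symbol interpreted in the $n$-th structure by $E_n$, and set $\Omega=\prod_n V_n/\mathcal{U}$, equipped with the Keisler graded probability space coming from the counting measures $|S|/|V_n|^k$; the definable subsets of each $\Omega^k$ serve as the countable approximating algebras $\mathcal{B}^0_k$, so the space is countably approximated, and it is atomless since $|V_n|\to\infty$ forces $\mu_1(\{x\})=0$. Let $E=\prod_n E_n/\mathcal{U}$, a symmetric graph in $\mathcal{B}^0_2$, and put $G=(\Omega,E)$. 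Because $\mu_2$ is the standard part of the ultralimit of the counting measures, $t_H(G)=\lim_{\mathcal{U}}t_H(G_n)=0$. The hypotheses of Theorem \ref{thm:graph_removal} now hold, so I obtain a symmetric $E'\subseteq E$ with $\mu_2(E\setminus E')=0$, with $E\setminus E'$ contained in an intersection of sets of $\mathcal{B}^0_2$, and with $T_H(\Omega,E')=\emptyset$.

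Finally I would descend to the finite level. Since $E\setminus E'$ is a $\mu_2$-null set of the Loeb-type space over the internal algebra $\mathcal{B}^0_2$, it is enclosed in a single set $S\in\mathcal{B}^0_2$ with $\mu_2(S)<\epsilon$; replacing $S$ by its intersection with its image under the coordinate swap, I may take $S$ symmetric. Write $S=\prod_n S_n/\mathcal{U}$ with each $S_n\subseteq V_n^2$ symmetric and put $E'_n=E_n\setminus S_n$. For $\mathcal{U}$-almost every $n$ we get $|E_n\setminus E'_n|\le|S_n|<\epsilon|V_n|^2$, since $\mu_2(S)=\lim_{\mathcal U}|S_n|/|V_n|^2<\epsilon$. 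Moreover $E\setminus S\subseteq E'$, so $T_H(\Omega,E\setminus S)\subseteq T_H(\Omega,E')=\emptyset$; as ``$T_H$ of a graph is empty'' is a first-order property and $E\setminus S=\prod_n(E_n\setminus S_n)/\mathcal{U}$, {\L}o\'s's theorem yields $T_H(V_n,E_n\setminus S_n)=\emptyset$, i.e. $T_H(V_n,E'_n)=\emptyset$, for $\mathcal{U}$-almost every $n$ — contradicting the choice of $G_n$. The one place that needs care, and the step I would treat most carefully, is this last descent: because $E\setminus E'$ is only \emph{contained in} an intersection of internal sets rather than being internal, one must enclose it in a single internal overset $S$ of small measure and then transfer emptiness of $T_H$ through the internal subgraph $E\setminus S\subseteq E'$, not through $E'$ itself, which need not be internal. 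The rest is routine bookkeeping with {\L}o\'s's theorem and the counting measure.
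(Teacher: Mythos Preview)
Your proposal is correct and follows essentially the same ultraproduct transfer as the paper's sketch: assume failure, build a counterexample sequence $G_n$, note $|V_n|\to\infty$, pass to the ultraproduct with its Loeb/Keisler structure, apply Theorem~\ref{thm:graph_removal}, cover the null set $E\setminus E'$ by a single internal $S\in\mathcal{B}^0_2$ of measure $<\epsilon$, and descend via \L{o}\'s. Your write-up is in fact slightly more careful than the paper's in two respects---you explicitly symmetrize $S$ and you articulate why the transfer must go through the internal subgraph $E\setminus S$ rather than through $E'$ itself---both of which are exactly the right points to flag.
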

\begin{proof}[Sketch]
  The proof is standard (see \cite{goldbring:_approx_logic_measure}), but we include the outline here.  Suppose the statement were false, so let $H=(W,F)$ and $\epsilon>0$ be a counterexample.  Then for each $n$, there is a $G_n=(V_n,E_n)$ with $t_H(G_n)<1/n$, but so that no symmetric $E'\subseteq E_n$ with $|E\setminus E'|<\epsilon|V_n|^2$ is $H$-free.  Note that $|V_n|\rightarrow\infty$ (otherwise $t_H(G_n)<1/n$ implies $T_H(G_n)=\emptyset$ for $n$ large enough).

  Let $(\Omega,E)$ be an ultraproduct of the sequence $G_n$.  Take the Keisler graded probability space generated by the definable sets, with the Loeb measure.  Let $E'$ be given by the previous lemma.  Then $E\setminus E'$ is contained in an intersection of definable sets, so choosing some definable set $Z_m$ large enough, $E\setminus Z_m$ is $H$-free and $Z_m$ has measure $<\epsilon$.  By the \L{o}\'s Theorem, for infinitely many $n$, we have $(V_n,E_n\setminus Z_m)$ is also $H$-free and $Z_m$ has measure $<\epsilon$.  (Where, by $Z_m$, we mean the interpretation of the definable set $Z_m$ in the structure $G_n$.)  But this is contradicts the choice of the $G_n$.
\end{proof}

\subsection{Induced Graph Removal}

When we prove induced graph removal, we have a new issue to deal with: when $\vec x$ is not a point of density, we cannot simply exclude the point from $E$, because, by doing so, we might end up creating an induced copy $\vec x$ where one of the non-edges of $\vec x$ is an element we removed from $E$.

Instead, we adopt a more complicated strategy.  We choose $j$ large, so that $\mathcal{N}^j$ gives a partition of $\Omega$ into very small pieces.  We will then choose, from each element $P$ of $\mathcal{N}^j$, a representative $a_P\in P$, uniformly at random.  Since we are only choosing finitely many such elements, with probability $1$, all the pairs $(a_P,a_{P'})$ with $P\neq P$ are points of density.  We then modify $E$ to match $(a_P,a_{P'})$ on $P\times P'$; that is, we define a new graph $E'$: if $(a_P,a_{P'})\in E$ then we place all of $P\times P'$ in $E'$, while if $(a_P,a_{P'})\not\in E$ then we exclude all of $P\times P'$ from $E'$.  If we choose $j$ large enough, we will be able to show that, for most choices of the representatives $a_P$, $\mu(E\bigtriangleup E')$ is small.

This leaves us with a new problem: what to do with the ``diagonal components'' $P\times P$.  When $j$ is large, these diagonals have small measure, so we can put them in or out of $E'$ as convenient.  On the other hand, we cannot guarantee that $(a_P,a_P)$ is a point of density.

\begin{theorem}[Induced Graph Removal]\label{thm:induced_removal}
  Suppose $H=(W,F)$ is a finite graph and $G=(\Omega,E)$ is a graph with a countably approximated atomless Keisler graded probability space on $\Omega$ with $E\in\mathcal{B}^0_2$. For each $\epsilon>0$ there is a symmetric $E'\in\mathcal{B}^0_2$ such that $\mu(E'\bigtriangleup E)<\epsilon$ and for any $H$ with $t_H^{ind}(E)=0$, $T_H^{ind}(E')=\emptyset$.
\end{theorem}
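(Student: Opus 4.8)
The plan is to build $E'$ as a randomized rounding of $E$ along a fine partition: on those cells whose local density is already forced to be $0$ or $1$ we round to $\emptyset$ or to the whole cell, while on the remaining, ``quasi-random'', cells we leave $E$ unchanged. Fix a system of neighbourhoods $\mathcal{N}$ of arity $1$ such that every set of $\mathcal{B}_1^0$ agrees up to measure zero with a finite union of cells of some $\mathcal{N}^j$ and such that every cell of every $\mathcal{N}^j$ already lies in $\mathcal{B}_1^0$. Put $g:=\mathbb{E}(\chi_E\mid\mathcal{K}_{2,1}(\mathcal{N}^\sigma))$ (take $g=(\chi_E)^+_{\mathcal{N}}$, so that by Lemma~\ref{thm:almost_every_lebesgue} $g$ is defined off a null set and almost every point is a point of density). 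Choose a level $j$, to be fixed after the measure estimate, and for each cell $P\in\mathcal{N}^j$ pick, independently and uniformly, two points $a_P,\tilde a_P\in P$; as there are only finitely many cells and the set of non-density-points is null, almost surely every pair $(a_P,a_{P'})$ with $P\neq P'$ and every pair $(a_P,\tilde a_P)$ is a point of density of $E$. Now define $E'$ cell by cell: for $P\neq P'$ let $E'\cap(P\times P')$ be $P\times P'$ if $g(a_P,a_{P'})=1$, be $\emptyset$ if $g(a_P,a_{P'})=0$, and be $E\cap(P\times P')$ if $0<g(a_P,a_{P'})<1$; on the diagonal cell $P\times P$ make the same trichotomy but using the value $g(a_P,\tilde a_P)$. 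Then $E'$ is a finite Boolean combination of $E$ with cells $P\times P'$ (each lying in $\mathcal{K}_{2,1}(\mathcal{B}_1^0)\subseteq\mathcal{B}_2^0$), so $E'\in\mathcal{B}_2^0$, and $E'$ is symmetric since $E$ and $g$ are.

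For the measure estimate note that $E\bigtriangleup E'$ meets a cell only when that cell has been rounded. Since $(a_P,a_{P'})$ is $\mu$-uniform on $P\times P'$, the probability that $P\times P'$ is rounded up is $\mu(\{g=1\}\cap(P\times P'))/\mu(P\times P')$, and then the error there is $\mu((P\times P')\setminus E)=\int_{P\times P'}(1-g)\,d\mu$ (as $P\times P'$ is $\mathcal{K}_{2,1}(\mathcal{N}^\sigma)$-measurable); similarly for rounding down, while the diagonal cells, rounded the same way, together contribute at most their total measure. Writing $h^j_{\mathcal{N}}$ for the average of $h$ over the cells of $\mathcal{N}^j$, this gives
\[\mathbb{E}\,\mu(E\bigtriangleup E')\ \le\ \int_{\{g=1\}}(1-g)^j_{\mathcal{N}}\,d\mu\ +\ \int_{\{g=0\}}g^j_{\mathcal{N}}\,d\mu\ +\ \max_{P\in\mathcal{N}^j}\mu(P).\]
Because $g$ is $\mathcal{K}_{2,1}(\mathcal{N}^\sigma)$-measurable we have $g^j_{\mathcal{N}}\to g$ and $(1-g)^j_{\mathcal{N}}\to 1-g$ in $L^1$ (this $L^2$-convergence is proved inside Lemma~\ref{thm:almost_every_lebesgue}), so the first two terms tend to $\int_{\{g=1\}}(1-g)=0$ and $\int_{\{g=0\}}g=0$, and the third tends to $0$. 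Hence for $j$ large enough the expectation is less than $\epsilon$; fix such a $j$ and a choice of the points $a_P,\tilde a_P$ realizing both $\mu(E\bigtriangleup E')<\epsilon$ and the density-point conclusion of the previous paragraph.

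Now suppose $H=(W,F)$ has $t_H^{ind}(E)=0$ but $\vec x_W\in T_H^{ind}(E')$, and write $P_w=\mathcal{N}^j(x_w)$. Consider first the case that the cells $P_w$ are pairwise distinct. For $e=\{w,w'\}\in F$, $\vec x_e\in E'\cap(P_w\times P_{w'})$ shows that $P_w\times P_{w'}$ was not rounded down, so $g(a_{P_w},a_{P_{w'}})>0$; for $e\notin F$, $\vec x_e\notin E'$ shows it was not rounded up, so $g(a_{P_w},a_{P_{w'}})<1$. Let $b_w=a_{P_w}$ (these are distinct), and let $B_e=\{g>0\}$ for $e\in F$ and $B_e=\{g<1\}$ for $e\notin F$; each $B_e\in\mathcal{K}_{2,1}(\mathcal{N}^\sigma)$. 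Apply Theorem~\ref{thm:graph_counting} with $S={W\choose 2}$ and $f_e=\chi_{B_e}$: the first alternative in its hypothesis applies since each $\chi_{B_e}$ is $\mathcal{K}_{e,1}(\mathcal{N})$-measurable, and by the last paragraph each $\vec b_e$ is a point of density of $E$ lying in $B_e$, hence a positive point of density of $\chi_{B_e}$. Thus $t_S(\{\chi_{B_e}\})=\mu(T_S(\{B_e\}))>0$. On the other hand $t_H^{ind}(E)=t_S(\{g_e\})$ by Lemma~\ref{thm:reduction}, with $g_e=g$ for $e\in F$ and $g_e=1-g$ otherwise; and since the $g_e$ are nonnegative and strictly positive on $T_S(\{B_e\})$, $t_S(\{g_e\})\ge\int_{T_S(\{B_e\})}\prod_{e}g_e\,d\mu>0$. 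This contradicts $t_H^{ind}(E)=0$.

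It remains to treat the case that two of the $x_w$ share a cell --- precisely the diagonal difficulty flagged in the discussion above, and one that also covers non-injective $\vec x_W$ (if $x_w=x_{w'}$ then $P_w=P_{w'}$). If $c$ is a maximal set of vertices lying in a common cell $Q$, then, according to whether $g(a_Q,\tilde a_Q)$ is $1$, is $0$, or lies strictly between, the restriction of $H$ to $c$ is forced to be a clique, to be edgeless, or to appear as a genuine induced copy inside $E\cap(Q\times Q)$; and, exactly as in the distinct case, there is an $F$-edge (resp.\ non-edge) between two such classes only if $g$ of the corresponding representative pair is positive (resp.\ less than $1$). The intended resolution is to ``de-collapse'' $\vec x_W$: using that the representative pairs $(a_Q,a_{Q'})$ and $(a_Q,\tilde a_Q)$ are points of density, spread the vertices of each class out inside a small neighbourhood of its cell (and inside the corresponding neighbourhoods of the other cells), and then run a second, nested application of Theorem~\ref{thm:graph_counting} --- with the clique / edgeless / quasi-random alternative on each diagonal cell supplying exactly the local configuration needed --- to manufacture a positive-measure family of genuine induced copies of $H$ in $E$, once more contradicting $t_H^{ind}(E)=0$. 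Making this decoupling precise, in particular coordinating the nested counting over all the collapsed classes simultaneously, is the step I expect to be the main obstacle.
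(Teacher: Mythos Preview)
Your construction of $E'$ on off-diagonal cells, the measure estimate, and the treatment of the case where the $P_w$ are pairwise distinct are all correct and close to the paper's argument (your measure computation is in fact tidier). One small point: you claim that a point of density of $\chi_E$ lying in $B_e=\{g>0\}$ is automatically a positive point of density of $\chi_{B_e}$; this is true, but deserves a line --- if $g(\vec b_e)=\alpha>0$ then the density-point condition forces $\mu(\{g>\alpha/2\}\cap\mathcal{N}^j(\vec b_e))/\mu(\mathcal{N}^j(\vec b_e))\to 1$, which is exactly what is needed.

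The genuine gap is the diagonal case, and your sketched resolution does not work. The difficulty is this: suppose several vertices of $W$ land in a single cell $Q$ and your trichotomy gives, say, $g(a_Q,\tilde a_Q)=1$, so $E'\supseteq Q\times Q$ and $H$ restricted to this class is a clique. You now need several points $b_1,\ldots,b_m$ (which will serve as positive points of density) with $g(b_i,b_j)>0$ for \emph{every} pair, and which simultaneously interact correctly with the representatives $a_{P'}$ of the other cells. But the single datum $g(a_Q,\tilde a_Q)=1$ gives you control only over pairs in $\mathcal{N}^j(a_Q)\times\mathcal{N}^j(\tilde a_Q)$; it says nothing about pairs within $\mathcal{N}^j(a_Q)^2$, which is where your ``spread out'' points would live. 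There is no reason such a clique-like configuration need exist, and no ``nested application of Theorem~\ref{thm:graph_counting}'' can manufacture one, since that theorem already requires you to \emph{have} the witnessing tuple of positive density points in hand.

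The paper's solution is a Ramsey argument you have not identified. One first shows, via Theorem~\ref{thm:graph_counting}, that the \emph{$d$-blowup} of the representative set $A=\{a_P\}$ --- with its off-diagonal $\{0,1,1/2\}$-colouring inherited from the $E_z$ --- has positive density in $(\Omega,\{E_z\})$ for every $d$. Inside any copy of a sufficiently large blowup one then applies Ramsey's theorem once per cell to extract a sub-blowup in which, for each $P$, all intra-$P$ pairs land in the same $E_z$; call the resulting colour $v(P)$. A pigeonhole over $d$ produces a single $v$ that works for all sizes. It is \emph{this} $v$, not your randomly sampled $g(a_Q,\tilde a_Q)$, that should determine the diagonal trichotomy in the definition of $E'$. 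With $v$ in hand, any $\vec x_W\in T_H^{ind}(E')$ can be mapped into a copy of the $v$-homogeneous $|W|$-blowup, yielding a tuple $\vec z_W$ whose every pair is a positive point of density for the correct set, and Theorem~\ref{thm:graph_counting} finishes the job.
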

\begin{proof}
  Let $f=\chi_E$.  Choose $j$ large enough that the set of pairs $(x_1,x_2)$ for which $\chi^j_E$ has not converged to within $\epsilon/3$ of its limit has measure at most $\epsilon/3$, and so that $\sum_{P\in\mathcal{N}^j}\mu(P\times P)<\epsilon/3$.

  We consider a partition of ${\Omega\choose 2}$ into three sets: $E_1=\{(x,y)\mid f^+(x,y)=1\}$, $E_0=\{(x,y)\mid f^+(x,y)=0\}$, and $E_{1/2}=\{(x,y)\mid 0<f^+(x,y)<1\}$.  (There is also a set of measure $0$ where $f^+(x,y)$ is undefined.)  We may think of these as the interior of $E$, the interior of the complement of $E$, and a boundary of points near both $E$ and the complement of $E$.  

  Suppose that, for each $P\in\mathcal{N}^j$ with $\mu(P)>0$, we choose a point $a_P\in P$ uniformly at random.  Then, with positive probability:
  \begin{itemize}
  \item the set of points contained in $P\times P'$ with $P\neq P'$ and such that $|f^+(a_P,a_{P'})-f^j(a_P,a_{P'})|\geq\epsilon/3$ has measure at most $\epsilon/3$,
  \item each $(a_P,a_{P'})$ is a point of density for each of $E_1,E_0,E_{1/2}$ and is a positive point of density for the set it belongs to.
  \end{itemize}

  Next we prepare to deal with elements of the sets $P\times P$. What we want to do is choose many points near each $a_P$; when we choose one point $b_{P,i}$ near $a_P$ and one point $b_{P',j}$ near $a_{P'}$ with $P\neq P'$, we can ensure, with high probability, that $(b_{P,i},b_{P',j})$ is similar to $(a_P,a'_P)$. When we take two points near the same $a_P$, $b_{P,i}$ and $b_{P,j}$ with $i\neq j$, we have no control over what happens. However, by applying Ramsey's Theorem (many times), we can at least ensure that the behavior does not depend on the particular choice of $i$ and $j$.

Formally, we will choose these points by applying our counting lemma to a suitable graph.  We may let $A=\{a_P\mid P\in\mathcal{N}^j, \mu(P)>0\}$.  For any $d$, let us consider the \emph{colored $d$-blowup} of $A$, which we define to be the $\{0,1/2,1\}$-colored graph $(A_d,c_d)$ where:
  \begin{itemize}
  \item $A_d=A\times[d]$,
  \item $\dom(c_d)=\{((a,i),(a',j))\mid a\neq a'\}$,
  \item when $a\neq a'$ and $(a,a')\in E_z$, $c_d((a,i),(a',j))=z$.
  \end{itemize}
Observe that Theorem \ref{thm:graph_counting} applies to $(A_d,c_d)$, so $t_{(A_d,c_d)}(\{E_z\}_{z\in\{0,1/2,1\}})>0$.

  When $v:A\rightarrow\{0,1,1/2\}$, the \emph{$v$-homogeneous completion} of $(A_d,c_d)$ is the colored graph $(A_d,c^v_d)$ where $c_d\subseteq c^v_d$ and, for $i\neq j$, $c^v_d((a,i),(a,j))=v(a)$.

  Take $m$ sufficiently large and consider any copy $\vec b_{A_m}$ of the colored $m$-blowup of $A$ in $(\Omega,E)$.  (This means that for each pair $((a,i),(a',j))\in{A_d\choose 2}$ with $a\neq a'$, $(b_{(a,i)},b_{(a,j)})\in E_z$ if and only if $(a,a')\in E_z$, and we make no commitments about which of the three sets $((a,i),(a',i))$ belongs to.)  Applying Ramsey's Theorem once for each $a\in A$, there is a $v$ and a sub-copy $\vec b_{A_d}$ of $\vec b_{A_m}$ which is a copy of $(A_d,c^v_d)$.

  Since there are only finitely many $v$, this means that for each $d$ there some $v$ so that $t_{(A_d,c^v_d)}(E)>0$.  Furthermore, if $d<d'$, we have $t_{(A_{d'},c^v_{d'})}(E)\leq t_{(A_d,c^v_d)}(E)$.  Therefore there must be some $v$ so that, for all $d$, $t_{(A_d,c^v_d)}(E)>0$.
  
  Finally, we have to deal with the case where $P\in\mathcal{N}^j$ has measure $0$.  To deal with this, we assign to every element $P\in \mathcal{N}^j$ a corresponding element $Q_P\in\mathcal{N}^j$, and we will always treat elements of $P$ as if they were really in $Q_P$.  For any $P\in\mathcal{N}^j$ with $\mu(P)=0$, choose some $Q_P\in\mathcal{N}^j$ with $\mu(Q_P)>0$.  When $\mu(P)>0$, take $Q_P=P$.  So for almost every point, $Q_P=P$, but there are a measure $0$ set of exceptional points\footnote{We could have tweaked our definition of a partition to avoid this case, but when we go on to hypergraphs, this case will be unavoidable, and the exceptional points will have small but positive measure.} where $Q_P\neq P$.

   We define $E'$ as follows:
  \begin{itemize}
  \item for $P\neq P'$, if $(a_{Q_P},a_{Q_{P'}})\in E_0$, let $E'\cap(P\times P')=\emptyset$,
  \item for $P\neq P'$, if $(a_{Q_P},a_{Q_{P'}})\in E_1$, let $P\times P'\subseteq E'$,
  \item for $P\neq P'$, if $(a_{Q_P},a_{Q_{P'}})\in E_{1/2}$, let $E'\cap (P\times P')=E\cap(P\times P')$,
  \item if $v(a_{Q_P})=1$ then $(P\times P)\subseteq E'$,
  \item if $v(a_{Q_P})=0$ then $E'\cap(P\times P)=\emptyset$,
  \item if $v(a_{Q_P})=1/2$ then $E'\cap(P\times P)=E\cap(P\times P)$.
  \end{itemize}

  Consider any graph $H=(W,F)$ such that $T^{ind}_H(E')\neq\emptyset$.  Take any $\vec x_W\in T^{ind}_H(E')$.  For each $w\in W$, let $P_w\in\mathcal{N}^j=Q_{\mathcal{N}^j}(x_w)$.  Note that we may have $P_w=P_{w'}$ even when $w\neq w'$, so fix an ordering $W=\{w_1,\ldots,w_{|W|}\}$.

  We have $t_{(A_{|W|},c^v_{|W|})}(\{E_z\}_{z\in\{0,1,1/2\}})>0$, so we may choose a copy $\vec y_{A_{|W|}}$ where all pairs are points of positive density for $E$ if they belong to $E_1\cup E_{1/2}$ and for $\overline{E}$ if they belong to $E_0\cup E_{1/2}$.

  Take $\vec z_{w_i}=\vec y_{(a_{P_{w_i}},i)}$.  For each pair $w_i\neq w_j$, observe that $(z_{w_i},z_{w_j})$ is a positive point of density for $E$ if $(w_i,w_j)\in F$ and for $\overline{E}$ if $(w_i,w_j)\not\in F$---to see this, suppose $(w_i,w_j)\in F$ (the case where $(w_i,w_j)\not\in F$ is symmetric):
  \begin{itemize}
  \item if $P_{w_i}\neq P_{w_j}$ then, since $(x_{w_i},x_{w_j})\in E'$, we have $(a_{P_{w_i}},a_{P_{w_j}})\not\in E_0$, so $(y_{a_{P_{w_i}},i},y_{a_{P_{w_j},j}})\in E_1\cup E_{1/2}$ and is therefore a positive point of density for $E$,
  \item if $P_{w_i}=P_{w_j}$ then, since $(x_{w_i},x_{w_j})\in E'$, we have $v(a_{P_{w_i}})\neq 0$, so again $(y_{a_{P_{w_i}},i},y_{a_{P_{w_i},j}})\in E_1\cup E_{1/2}$, and is therefore a positive point of density for $E$.
  \end{itemize}

  Therefore we may apply Theorem \ref{thm:graph_counting} to $\vec z_{w_i}$ to show that $t_H(E)>0$.

  It remains to show that $\mu(E\bigtriangleup E')<\epsilon$.  Observe that of $(x_w,x_{w'})\in E\bigtriangleup E'$ then, letting $P=\mathcal{N}^j(x_w)$ and $P'=\mathcal{N}^j(x_{w'})$, one of the following holds:
  \begin{enumerate}
  \item $P=P'$,
  \item $\mu(P)=0$ or $\mu(P')=0$,
  \item $|f^+(a_{P},a_{P'})-f^j(a_P,a_{P'})|>\epsilon/3$,
  \item $f^j(a_P,a_{P'})\geq 1-\epsilon/3$ and $(x_w,x_{w'})\not\in E$, or
  \item $f^j(a_P,a_{P'})< \epsilon/3$ and $(x_w,x_{w'})\in E$.
  \end{enumerate}
  The first case accounts for measure at most $\epsilon/3$, the second case for measure $0$, the third case for measure at most $\epsilon/3$, and the last two can each account for at most an $\epsilon/3$ proportion of each component $P\times P'$, so at most $\epsilon/3$ in total.  So $\mu(E\bigtriangleup E')<\epsilon$.
\end{proof}

\begin{cor}
  For every finite graph $H=(W,F)$ and every $\epsilon>0$ there is a $\delta>0$ so that whenever $G=(V,E)$ is a graph with $t^{ind}_H(G)<\delta$, there is a symmetric $E'\subseteq E$ with $|E\setminus E'|<\epsilon|V|^2$ such that, taking $G'=(V,E')$, $T^{ind}_H(G')=\emptyset$.
\end{cor}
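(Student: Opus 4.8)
The plan is to obtain this finite statement from the infinitary Theorem~\ref{thm:induced_removal} by the standard ultraproduct/compactness argument, exactly as in the sketch following Theorem~\ref{thm:graph_removal} (see \cite{goldbring:_approx_logic_measure}); I will only indicate the points where the induced case differs. First I would argue by contradiction: if the corollary fails, fix a finite graph $H=(W,F)$ and $\epsilon>0$ witnessing the failure, so that for every $n$ there is a graph $G_n=(V_n,E_n)$ with $t^{ind}_H(G_n)<1/n$ for which no symmetric $E'$ with $|E_n\bigtriangleup E'|<\epsilon|V_n|^2$ satisfies $T^{ind}_H(V_n,E')=\emptyset$. As in the previous sketch one checks $|V_n|\to\infty$, since otherwise $t^{ind}_H(G_n)<1/n$ already forces $T^{ind}_H(G_n)=\emptyset$ once $n$ exceeds the (bounded) number of $W$-tuples in $V_n$.

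Next I would pass to the ultraproduct: fix a nonprincipal ultrafilter $\mathcal{U}$, let $(\Omega,E)=\prod_n G_n/\mathcal{U}$, and equip $\Omega$ with the Keisler graded probability space generated by the definable sets together with the Loeb measure, which is countably approximated and atomless with $E\in\mathcal{B}^0_2$. By the defining property of the Loeb measure, the measure of a definable set is the limit along $\mathcal{U}$ of its densities in the $G_n$; since $t^{ind}_H(G_n)<1/n\to 0$, this gives $t^{ind}_H(\Omega,E)=0$. Theorem~\ref{thm:induced_removal}, applied with this $\epsilon$, then produces a symmetric $E'\in\mathcal{B}^0_2$ with $\mu(E'\bigtriangleup E)<\epsilon$ and $T^{ind}_H(\Omega,E')=\emptyset$. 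Note that, unlike in the non-induced case, the modification is genuinely two-sided, so the appropriate measure of how much $E'$ differs from $E$ is $|E\bigtriangleup E'|$ (which in particular bounds $|E\setminus E'|$); a subgraph-only phrasing is not available once non-edges must also be corrected.

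Finally I would transfer the conclusion back down to the $G_n$. The only point requiring a moment's care — and the analogue of the $Z_m$ step in the previous sketch — is that here $E'$ is itself an element of $\mathcal{B}^0_2$, hence a definable subset of $\Omega^2$, rather than (as in Theorem~\ref{thm:graph_removal}) a modification of $E$ off a non-definable null set; this is exactly why no auxiliary approximating sets are needed. Let $E'_n\subseteq V_n^2$ be the interpretation of this definable set in $G_n$. The assertion ``$(V_n,E'_n)$ has no induced copy of $H$'' is a single first-order sentence with $|W|$ existential quantifiers, so by \L{o}\'s's theorem it holds for $\mathcal{U}$-almost every $n$; and since $\mu(E'\bigtriangleup E)=\lim_{\mathcal{U}}|E'_n\bigtriangleup E_n|/|V_n|^2<\epsilon$, we also have $|E'_n\bigtriangleup E_n|<\epsilon|V_n|^2$ for $\mathcal{U}$-almost every $n$. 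Any such $n$ contradicts the choice of $G_n$, completing the argument. (As usual, an explicit bound on $\delta$ in terms of $H$ and $\epsilon$ can in principle be recovered by unwinding the proof, but is not produced by this argument.)
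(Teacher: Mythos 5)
Your argument is correct and is exactly the standard ultraproduct transfer the paper intends: it leaves this corollary unproved, relying on the same sketch given after Theorem \ref{thm:graph_removal}, with Theorem \ref{thm:induced_removal} supplying the modification of the limit object and \L{o}\'s's theorem plus the Loeb-measure computation pushing it back to the finite graphs, just as you do. Your remark that the conclusion must be read two-sided---a symmetric $E'$ with $|E\bigtriangleup E'|<\epsilon|V|^2$ rather than $E'\subseteq E$---is the right reading; the subgraph-only phrasing in the stated corollary is an artifact of the non-induced version and cannot hold for induced removal (e.g.\ when $H$ is a non-edge), and your proof establishes the correct statement.
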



\section{Hypergraphs}

\subsection{Sequences of Neighborhoods}

In order to extend the arguments above to hypergraphs, we need to deal with an additional complication.  When $G=(\Omega,E)$ and $H=(W,F)$ are graphs and we consider the product $t_H(G)=\int\prod_{(w,w')\in F}\chi_E(x_w,x_{w'})\,d\mu$, the distict terms in the product only overlap on a single coordinate.  The crucial step is that in Lemma \ref{thm:reduction}, when we look at a single edge $e_0=(w_0,w'_0)\in F$, the ``overlaps'' with other edges in $F\setminus\{e_0\}$ share at most one coordinate, and are therefore $\mathcal{B}_{2,1}$-measurable.  This means that we are able to use Lemma \ref{thm:reduction} (in the proof of Theorem \ref{thm:graph_counting}) to replace $E$ with $\mathbb{E}(\chi_E\mid\mathcal{B}_{2,1})$.

When $G=(\Omega,E)$ and $H=(W,F)$ are $3$-graphs, however, the product $t_H(G)=\int\prod_{(w,w',w'')\in F}\chi_E(x_w,x_{w'},x_{x''})\,d\mu$ has terms which can share two coordinates.  If we try to carry out a proof analogous to the proof of Theorem \ref{thm:graph_counting}, we are only able to reduce $E$ to $\mathbb{E}(\chi_E\mid\mathcal{B}_{3,2})$.  $\mathbb{E}(\chi_E\mid\mathcal{B}_{3,2})$, however, is ``graph-like''---it is described in terms of two coordinates at a time, like a graph.

This leads us to an iterated process where, at each step, we reduce the number of coordinates by one.  This means we need to consider, not a single system of neighborhoods, but a sequence of then: we will have a sequence of systems of neighborhoods, $\mathcal{N}_d,\ldots,\mathcal{N}_1$, and we will consider not just the neighborhoods $\mathcal{N}_d^j(\vec x)$, but how these neighborhoods sit in the neighborhood $\mathcal{N}^j_d(\vec x)\cap\mathcal{N}_{d-1}^{j'}(\vec x)$ with $j'\gg j$, and so on.

In this section we will set up all the general machinery.  For concreteness, we'll focus on the case needed to prove induced hypergraph removal, which means we will focus on the case where $\vec x$ is a $k$-tuple and we consider systems of neighborhoods $\mathcal{N}_{k-1},\ldots,\mathcal{N}_1$ where $\mathcal{N}_i$ has arity $i$.  We will refer to this, throughout this section, as the \emph{standard example}.  In particular, note that this example illustrates that in the intersection $\mathcal{N}^j_d(\vec x)\cap\mathcal{N}_{d-1}^{j'}(\vec x)$, the set $\mathcal{N}^j_d(\vec x)$ is ``more complicated'' (for example, it is defined using sets of arity $d$) while the set $\mathcal{N}_{d-1}^{j'}(\vec x)$ is ``finer'' (since $j'\gg j$, we are working with a much finer partition of $\Omega^{d-1}$).  So we are looking at neighborhoods which use ``some high complexity information and a lot of low complexity information''.

We will nonetheless work, where possible, with general systems of neighborhoods, since this is the case we will use in the next section.  (In the next section, $\mathcal{N}_{i+1}$ will have arity $i$, and $\mathcal{N}_1$ will consist only of intervals.)

For this purpose, we identify the property we need a sequence of systems of neighborhoods to have to be workable.  (For instance, when $k>3$, we cannot use a sequence of neighborhoods of arity $k-1$ followed immediately by a sequence of arity $1$).  We need some property that guarantees that the $\mathcal{N}_{i+1}$ is ``not too much more complicated'' than $\mathcal{N}_i$, and it should be related to the ``computability of overlaps'' clause from Theorem \ref{thm:reduction}.  The general property we need is given by the following definition.
\begin{definition}
  If $\mathcal{D}$ is a $\sigma$-algebra of sets of $s$ tuples, $\mathcal{C}$ is a $\sigma$-algebra of sets of $r$-tuples, and $s\leq r$, we say $\mathcal{C}$ is \emph{properly aligned} in $\mathcal{D}$ if, for any $C\in\mathcal{C}$ and any $c$ with $1\leq c\leq r$, the function
  \[f(x_1,\ldots,x_r)=\int \chi_C(y_1,\ldots,y_c,x_{c+1},\ldots,x_r)\,d\mu\]
  is $\mathcal{K}_{r,s}(\mathcal{D})$-measurable.

  We say a sequence of $\sigma$-algebras $\mathcal{D}_d,\ldots,\mathcal{D}_1$ where $\mathcal{D}_i$ is a $\sigma$-algebra of sets of $r_i$-tuples, is \emph{properly aligned} if:
  \begin{itemize}
  \item $r_1=1$,
  \item $r_i\leq r_{i+1}$ for each $i<d$, and
  \item $\mathcal{D}_{i+1}$ is properly aligned in $\mathcal{D}_i$ for each $i<d$.
  \end{itemize}
\end{definition}

Of course, the standard example itself is properly aligned.
\begin{lemma}
  The sequence of $\sigma$-algebras $\mathcal{B}_d,\mathcal{B}_{d-1},\ldots,\mathcal{B}_1$ is properly aligned.
\end{lemma}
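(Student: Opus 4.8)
The plan is to verify directly the three clauses in the definition of a properly aligned sequence. The first two are immediate: $\mathcal{B}_1$ consists of sets of $1$-tuples, so $r_1=1$, and $\mathcal{B}_i$ consists of sets of $i$-tuples, so $r_i=i\leq i+1=r_{i+1}$ for each $i<d$. All the content lies in the third clause: for each $i<d$, $\mathcal{B}_{i+1}$ is properly aligned in $\mathcal{B}_i$, where the relevant arities are $s=i\leq i+1=r$ and the target $\sigma$-algebra is $\mathcal{K}_{i+1,i}(\mathcal{B}_i)=\mathcal{B}_{i+1,i}$ (equal by definition since $i<i+1$).

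So fix $i<d$, take $C\in\mathcal{B}_{i+1}$ and $c$ with $1\leq c\leq i+1$; I must show that
\[f(x_1,\ldots,x_{i+1})=\int\chi_C(y_1,\ldots,y_c,x_{c+1},\ldots,x_{i+1})\,d\mu\]
is $\mathcal{B}_{i+1,i}$-measurable. When $c=i+1$ this is trivial, since $f\equiv\mu_{i+1}(C)$. For $1\leq c\leq i$ I first invoke the symmetry condition to replace $C$ by $C^\pi$, where $\pi$ moves the coordinate block $\{c+1,\ldots,i+1\}$ to the first $i+1-c$ positions. The slicing (Fubini) condition, applied to $C^\pi$ with those first $i+1-c$ coordinates in the role of the fixed coordinates and the remaining $c$ in the role of the slice, then says that for $\mu_{i+1-c}$-almost every choice of $(x_{c+1},\ldots,x_{i+1})$ the slice $\{(y_1,\ldots,y_c)\mid(y_1,\ldots,y_c,x_{c+1},\ldots,x_{i+1})\in C\}$ is $\mathcal{B}_c$-measurable, and that its measure, as a function of $(x_{c+1},\ldots,x_{i+1})$, is $\mathcal{B}_{i+1-c}$-measurable. (This last measurability is exactly what makes the integrand in the identity $\mu_{i+1}(C^\pi)=\int\mu_c(\text{slice})\,d\mu_{i+1-c}$ meaningful; it can also be recovered by the usual monotone-class argument, first for product sets.) This slice-measure function is precisely $f$, read off the coordinates on which $f$ actually depends.

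It remains to lift back to $\Omega^{i+1}$. Write $e_0=\{c+1,\ldots,i+1\}$, so $f$ is $\mathcal{B}_{e_0}$-measurable, and $|e_0|=i+1-c\leq i$ because $c\geq 1$; hence $e_0\subseteq e':=\{2,3,\ldots,i+1\}$, an $i$-element subset of $[i+1]$. By the product and symmetry conditions, $\mathcal{B}_{e_0}\subseteq\mathcal{B}_{e'}$ (a set of $\mathcal{B}_{e_0}$ extended by a full factor on the extra coordinate lands in $\mathcal{B}_{e'}$), so $f$ is $\mathcal{B}_{e'}$-measurable. Finally, for any $B\in\mathcal{B}_{e'}$ the set $\{\vec x\mid\vec x_{e'}\in B\}$ is a $([i+1],{[i+1]\choose i})$-cylinder intersection set with all components in $\mathcal{B}_i$ (put $B$ on the block $e'$ and $\Omega^i$ on every other $i$-subset), so it belongs to $\mathcal{B}_{i+1,i}=\mathcal{K}_{i+1,i}(\mathcal{B}_i)$; hence every $\mathcal{B}_{e'}$-measurable function, $f$ included, is $\mathcal{K}_{i+1,i}(\mathcal{B}_i)$-measurable. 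This is exactly proper alignment of $\mathcal{B}_{i+1}$ in $\mathcal{B}_i$, and therefore the sequence $\mathcal{B}_d,\mathcal{B}_{d-1},\ldots,\mathcal{B}_1$ is properly aligned. The whole argument is coordinate bookkeeping together with the Keisler axioms, so I do not expect a real obstacle; the only point needing a moment's care is the measurability of the slice-measure map, which, as noted, is already built into the slicing condition.
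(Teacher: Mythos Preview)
Your proposal is correct and follows essentially the same approach as the paper: observe that $f$ depends only on the $i+1-c$ coordinates $(x_{c+1},\ldots,x_{i+1})$, is $\mathcal{B}_{i+1-c}$-measurable in those, and since $i+1-c\leq i$ this puts $f$ in $\mathcal{K}_{i+1,i}(\mathcal{B}_i)=\mathcal{B}_{i+1,i}$. The paper compresses this into a single sentence, while you spell out the symmetry permutation, the Fubini axiom, and the cylinder-set embedding explicitly; the underlying argument is the same.
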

\begin{proof}
  Since $r_i=i$, the first two conditions are immediate.  If $C\in\mathcal{B}_{i+1}$ then the function $f(x_1,\ldots,x_{i+1})=\int \chi_C(y_1,\ldots,y_c,x_{c+1},x_{i+1})\,d\mu$ depends only on $(x_{c+1},\ldots,x_{i+1})$, and is therefore $\mathcal{K}_{i+1,i+1-c}(\mathcal{B}_{i-c})\subseteq\mathcal{B}_{i+1,i}(\mathcal{B}_i)$-measurable.
\end{proof}

When dealing with graphs, although we stated things in terms of tuples $\vec x_W$, we were really interested in the collection of infinitesimal neighborhoods $\{\lim_{j\rightarrow\infty}\mathcal{N}^j(x_w)\}_{w\in W}$.  In the graph setting, however, we could ignore the distinction between a point and its infinitesimal neighborhood.

For hypergraphs, though, we need to consider multiple layers of infinitesimal neighborhoods: in the standard example, a pair $(x_1,x_2)$ has a pair of infinitesimal neighborhoods $\lim_{j\rightarrow\infty}(\mathcal{N}^j_1(x_1), \mathcal{N}^j_1(x_2))$ and then an infinitesimal neighborhood of pairs $\lim_{j\rightarrow\infty}\mathcal{N}^j_2(x_1,x_2)$.  The problem is that specifying an actual tuple of points pins down all these infinitesimal neighborhoods simultaneously.  But there could be distinct pairs $(x_1,x_2),(y_1,y_2)$ with $(\mathcal{N}^j_1(x_1), \mathcal{N}^j_1(x_2))=(\mathcal{N}^j_1(y_1), \mathcal{N}^j_1(y_2))$ for all $j$, but $\mathcal{N}^j_2(x_1,x_2)\neq\mathcal{N}^j_2(y_1,y_2)$ for some $j$---that is, a pair of infinitesimal neighborhoods of points might (and, in general, does) partition into many neighborhoods of pairs.

Now, however, we need to separate these notions properly.  We borrow model-theoretic terminology, referring to infinitesimal neighborhoods as \emph{types}.
\begin{definition}
  When $\mathcal{N}$ is a system of neighborhoods with arity $r$, an $\mathcal{N}$-type is a decreasing sequence $P_1\supseteq P_2\supseteq\cdots$ with each $P_j\in\mathcal{N}^j$ a non-empty set.  When $p=\{P_j\}$ is a type, we write $p(j)=P_j$.  For any $\vec x\in\Omega^r$, we write $tp_{\mathcal{N}}(\vec x)=\{\mathcal{N}^j(\vec x)\}$.
\end{definition}

There are two different perspectives on types which it will be useful to keep in mind below.  The simpler perspective is that a type is, essentially, a $G_\delta$-set (more precisely, a distinguished presentation of a $G_\delta$-set): the type is giving us the set of points $\bigcap_j P_j$, and dealing with $\mathcal{N}_1(x)$ rather than $x$ is a way of ``zooming out'' from $x$ to all the points infinitesimally close to it.

In particular, if we fix two $\mathcal{N}_1$-types $\mathcal{N}_1(x_1),\mathcal{N}_1(x_2)$, we are fixing two sets, and so the product $\mathcal{N}_1(x_1)\times\mathcal{N}_1(x_2)$ is itself a rectangle.  Although this rectangle has measure $0$, we can hope that it behaves like a limit of the positive measure rectangles $\mathcal{N}_1^j(x_1)\times\mathcal{N}_1^j(x_2)$.  For instance, if $E$ is a random graph, we might expect that $\mathcal{N}_1^j(x_1)\times\mathcal{N}_1^j(x_2)$ contains both pairs belonging to $E$ and pairs not belonging to $E$.  Indeed, we will see that almost all points belong to types which do behave like the limit of the positive measure types that approximate them.

There is a technical subtlety: perhaps there is a failure of compactness and the intersection $\bigcap P_j$ happens to be empty, even though each finite intersection is non-empty.  In practice, we always care more about the approximations to the set than the actual intersection: it the intersection happened to be empty, we could always fill in a point inside it.  Indeed, ultraproducts are \emph{saturated} which, in particular, ensures that each type is non-empty.

This suggests the second perspective: we can think of the types themselves as being points, in a different but related space.  That is, instead of working with the space $\Omega$ of points, we can work with a space $\Omega_1$ where an element of $\Omega_1$ is a $\mathcal{N}_1$-type, and we have a measurable function $tp:\Omega\rightarrow\Omega_1$.  We will not explicitly use this second perspective, but it may be useful to keep in mind.\footnote{This second perspective also an explicit connection to the graphon-based approaches to regularity, as in \cite{Coregliano2019SemanticLO,MR2964622}.  These approaches avoid the use of a Keisler graded probability space by taking our spaces $\Omega^r$ with $r>1$ and decomposing the ``non-product'' content into a separable factor.  For instance, where we work with $\Omega^2$, they use a ternary product $\Omega^3$, where the first two components represent copies of $\Omega$ while the third contains the part of $\Omega^2$ which is not measurable with respect to $\mathcal{B}_{2,1}$.  Types give an alternate construction of this: we can see that the map $tp_1:\Omega^2\rightarrow\Omega_1^2$ given by $tp_1(x,y)=(\mathcal{N}_1(x),\mathcal{N}_1(y))$ is inadequate---for instance, if $E$ is a random graph on $\Omega$, there is no $E_*\subseteq\Omega_1^2$ with $E=tp_1^{-1}(E_*)$.  Instead, the correct map is $tp:\Omega^2\rightarrow\Omega_1^2\times\Omega_2$, where $\Omega_2$ the space of $\mathcal{N}_2$-types; $\Omega^2$ is a Keisler graded probability space, but $\Omega_1^2\times\Omega_2$ is an ordinary measure-theoretic product.}

When $\vec x_W$ is a tuple, we want to consider the $\mathcal{N}$-type of $\vec x_W$, by which we mean the $\mathcal{N}$-types of all size $r$ subsets of $W$.  Slightly more generally, if $S\subseteq{W\choose r}$, we need to consider the collection of $\mathcal{N}$-types precisely for those $e\in S$.  (The case we will need this for is that, if $x_w=x_{w'}$, we will want to ignore those $e\in{W\choose r}$ which contain both $x_w$ and $x_{w'}$.)
\begin{definition}
  When $r\leq |W|$ is the arity of $\mathcal{N}$ and $S\subseteq{W\choose r}$, an $\mathcal{N}$-$S$-type is a tuple $\vec p_S=\{\vec p_e\}_{e\in S}$ such that for each $e\in S$, $\vec p_e$ is an $\mathcal{N}$-type and, for each $j$, $\vec p_S(j)=T_{S}(\{\vec p_e(j)\})$ is non-empty.

  For any point $\vec x_W$, letting $S=\mathcal{R}(\vec x_W)$, there is a corresponding $\mathcal{N}$-$S$-type $tp(\vec x_W)$ given by $(tp_{\mathcal{N}}(\vec x_W))_e=tp_{\mathcal{N}}(\vec x_e)$.
\end{definition}
The only case we will need is where $S={W\choose r}\setminus\mathcal{R}(\vec x_W)$ (or an analog replacing $\vec x_W$ with $\mathcal{N}_1$-types).   Since tuples with repeated coordinates are an exceptional case with measure $0$, they will not be needed until we deal with induced hypergraph removal.

Note that $f^+_{\mathcal{N}}(\vec x)$ depends only on the type of $\vec x$, not on the particular point, and so being a point of density is a property of the type: if $\vec x$ is a point of density for $f$ in $\mathcal{N}$ then so is every $\vec x$ in $tp_{\mathcal{N}}(\vec x)$.

Finally, we need our most general definition: we have a sequence of systems of neighborhoods $\mathcal{N}_d,\ldots,\mathcal{N}_1$ and want to consider the $\mathcal{N}_i$-type of a $\vec x_W$-tuple for all $i$ simultaneously.
\begin{definition}
  When $\vec p_1=\{\vec p_{1,w}\}_{w\in W}$ is a $\mathcal{N}_1$-${W\choose 1}$-type, we write $\mathcal{R}_r(\vec p_1)$, the \emph{tuples of length $r$ with repeated elements} for the set of tuples $e\in{W\choose r}$ such that there are $w,w'\in e$ with $w\neq w'$ and $p_{1,w}=p_{1,w'}$.  When there are no repeated types, we will write $\mathcal{R}(\vec p_1)=\emptyset$ (omitting the subscript $r$).
  
  When $\mathcal{N}_d,\ldots,\mathcal{N}_1$ is a sequence of systems of neighborhoods for each $i$, a $\mathcal{N}_d,\ldots,\mathcal{N}_1$-type is a set $\vec p_W=\{\vec p_{i,e}\}_{i\leq d}$ such  where $\vec p_1=\{\vec p_{1,w}\}_{w\in W}$ is an $\mathcal{N}_1$-${W\choose 1}$-type and for $i>1$, $\vec p_i=\{\vec p_{i,e}\}_{e\in{W\choose r_i}\setminus \mathcal{R}_{e_i}(\vec p_1)}$ is a $\mathcal{N}_i$-$({W\choose r_i}\setminus\mathcal{R}_{r_i}(\vec p_1))$-type.

  For any point $\vec x_W$, we write $tp_{\mathcal{N}_d,\ldots,\mathcal{N}_1}(\vec x_W)$ for the type given by $(tp_{\mathcal{N}_d,\ldots,\mathcal{N}_1}(\vec x_W))_i=tp_{\mathcal{N}_i}(\vec x_W)$.
\end{definition}
This definition really is what we should expect: a $\mathcal{N}_d,\ldots,\mathcal{N}_1$-type $\vec p$ assigns, for each $i\leq d$ and each $r_i$-sub-tuple $e$ without repeated $\mathcal{N}_1$-types, an $\mathcal{N}_i$-type $\vec p_{i,e}$. The tuples with repeated types are omitted because those tuples concentrate on diagonals, and have to be handled differently.

This is precisely a description of an infinitesimal complex: we wish to consider a subset of $\Omega^W$ where, for each $e\in{W\choose r_i}$, we restrict ourselves to the set of $\vec x_W$ such that $\vec x_e\in\vec p_{i,e}$.

The additional subtlety is that when when we have a repeated tuple $\vec p_{1,w}=\vec p_{1,w'}$, we don't want to consider more complicated types containing more than one of them.  (This is a technical point involving how we handle repeated vertices in the proof of induced hypergraph counting, but for now, observe that if $\vec p_{1,w}=\vec p_{1,w'}$ then we should expect a type $\vec p_{2,\{2,2'\}}$ containing both to concentrate on a diagonal; since the diagonal has measure $0$, this means our type concentrates on a set of measure $0$, which obstructs our ability to prove a counting lemma.)

\subsection{Dense Types}

We have already noted that being a point of density is really a property of the type, not that point.  For completeness, we restate the definition in terms of types.  Recall that, for any $\mathcal{N}$-type $p$ and any integer $j$, $p(j)$ is a set in $\mathcal{N}_j$ approximating $p$.
\begin{definition}
  Let $\mathcal{N}$ be a nested system of neighborhoods.  Let $f:\Omega^W\rightarrow[0,1]$ be given.  For any $\mathcal{N}$-type $p$ such that each $p(j)$ has positive measure, we define
  \[f^j_{\mathcal{N}}(p)=\frac{1}{\mu(p(j))}\int_{p(j)}f(\vec x)\,d\mu\]
  and
  \[f^+_{\mathcal{N}}(p)=\lim_{j\rightarrow\infty}f^j_{\mathcal{N}}(p).\]

  We say an $\mathcal{N}$-$S$-type $p$ is a \emph{dense type for $f$} if $f^+_{\mathcal{N}}(p)$ exists and
  \[\lim_{j\rightarrow\infty}\frac{1}{\mu(p(j))}\int_{p(j)}|f^+_{\mathcal{N}}(y)-f^+_{\mathcal{N}}(p)|\,d\mu(y)=0.\]

  We say $p$ is a \emph{positive dense type for $f$} if $p$ is a dense type for $f$ and $f^+_{\mathcal{N}}(p)>0$.
\end{definition}

In order to prove hypergraph removal, we will need to consider types which are ``recursively'' dense types for $f$.  That means that when we have a $\vec p_e$, we need $\{\vec p_{e'}\}_{e'\subsetneq e}$ to be a dense type for each (or at least most) of the sets $\vec p_e(j)$. In order to make the inductive step work, we need to demand that $f$ be dense at $\vec p$ in a slightly stronger way.

We need to relativize the conditional expectation.  We take a $\sigma$-algebra $\mathcal{D}$, a function $f$, and a set $B$ which we should think of as being more complicated than those in $\mathcal{D}$ (for instance, we might have $\mathcal{D}=\mathcal{B}_{2,1}$ and $B\in\mathcal{B}_2\setminus\mathcal{B}_{2,1}$), and we want to define the conditional expectation of $f$ ``around the set $B$''.  We will write this $\mathbb{E}(f\curvearrowright B\mid\mathcal{D})$, which will be precisely the $\mathcal{D}$-measurable information with the property that, when given $B$, we can reconstruct $\mathbb{E}(f\chi_B\mid\mathcal{D})$.

\begin{definition}
  Let $f$ be a function, $P$ a set, and $\mathcal{D}$ a $\sigma$-algebra.  The \emph{weighted projection} $\mathbb{E}(f\curvearrowright P\mid\mathcal{D})$ is defined to be the unique (up to $L^2$-norm) function with domain $\{\vec x\mid\mathbb{E}(\chi_P\mid\mathcal{D})(\vec x)>0\}$ such that
\[\mathbb{E}(f\curvearrowright P\mid\mathcal{D})(\vec x)=\frac{\mathbb{E}(f\chi_P\mid\mathcal{D})(\vec x)}{\mathbb{E}(\chi_P\mid\mathcal{D})(\vec x)}.\]
\end{definition}
Note that $\mathbb{E}(f\curvearrowright P\mid\mathcal{D})$ is, as the notation suggests, measurable with respect to $\mathcal{D}$.  The main fact we will need is the following.

\begin{lemma}
  If $g$ is $\mathcal{D}$-measurable then
  \[\int f\chi_Pg\,d\mu=\int \mathbb{E}(f\curvearrowright P\mid\mathcal{D})\chi_Pg\,d\mu.\]
\end{lemma}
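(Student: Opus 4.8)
The plan is to unwind the definition of the weighted projection and then reduce the desired identity to the defining property of ordinary conditional expectation. First I would note that by the definition of $\mathbb{E}(f\curvearrowright P\mid\mathcal{D})$, on the set $A=\{\vec x\mid\mathbb{E}(\chi_P\mid\mathcal{D})(\vec x)>0\}$ we have the pointwise identity
\[\mathbb{E}(f\curvearrowright P\mid\mathcal{D})\cdot\mathbb{E}(\chi_P\mid\mathcal{D})=\mathbb{E}(f\chi_P\mid\mathcal{D}).\]
Outside $A$, the function $\mathbb{E}(\chi_P\mid\mathcal{D})$ vanishes, and since $\mathbb{E}(\chi_P\mid\mathcal{D})\geq 0$ this forces $\mathbb{E}(f\chi_P\mid\mathcal{D})=0$ almost everywhere on the complement of $A$ as well (because $0\le \chi_P \le 1$, so $0\le \mathbb{E}(f\chi_P\mid\mathcal D)\le \mathbb{E}(\chi_P\mid\mathcal D)$ when $f\ge 0$; one should remark that $f$ is $[0,1]$-valued here, as everywhere else in the paper). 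Hence $\mathbb{E}(f\chi_P\mid\mathcal{D})$ is concentrated on $A$, and the displayed pointwise identity holds almost everywhere.

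Next I would compute the right-hand side. Since $g$ is $\mathcal{D}$-measurable and $\mathbb{E}(f\curvearrowright P\mid\mathcal{D})$ is $\mathcal{D}$-measurable, their product $\mathbb{E}(f\curvearrowright P\mid\mathcal{D})\,g$ is $\mathcal{D}$-measurable, so by the defining property of conditional expectation applied to the integrand $\chi_P$ (pulling the $\mathcal D$-measurable factor inside),
\[\int \mathbb{E}(f\curvearrowright P\mid\mathcal{D})\,\chi_P\,g\,d\mu=\int \mathbb{E}(f\curvearrowright P\mid\mathcal{D})\,g\,\mathbb{E}(\chi_P\mid\mathcal{D})\,d\mu=\int \mathbb{E}(f\chi_P\mid\mathcal{D})\,g\,d\mu,\]
using the pointwise identity from the previous step in the last equality. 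Finally, applying the defining property of conditional expectation once more — now to the integrand $f\chi_P$ with the $\mathcal D$-measurable factor $g$ — gives
\[\int \mathbb{E}(f\chi_P\mid\mathcal{D})\,g\,d\mu=\int f\chi_P\,g\,d\mu,\]
which is the left-hand side. Chaining these equalities proves the lemma.

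I do not expect any serious obstacle here; the only point requiring a little care is the behavior on the set where $\mathbb{E}(\chi_P\mid\mathcal{D})=0$, i.e. checking that $\mathbb{E}(f\chi_P\mid\mathcal{D})$ genuinely vanishes there so that the product identity $\mathbb{E}(f\curvearrowright P\mid\mathcal{D})\cdot\mathbb{E}(\chi_P\mid\mathcal{D})=\mathbb{E}(f\chi_P\mid\mathcal{D})$ holds a.e. on all of $\Omega^W$ (and not merely on the domain where the quotient is defined). This is handled by the nonnegativity bound $0\le \mathbb{E}(f\chi_P\mid\mathcal D)\le\mathbb{E}(\chi_P\mid\mathcal D)$ valid for $[0,1]$-valued $f$. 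Everything else is a direct application of the ``taking out what is known'' property of conditional expectation, applied twice.
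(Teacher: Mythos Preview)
Your proof is correct and follows essentially the same three-step argument as the paper's: apply the defining property of conditional expectation once to $f\chi_P$, use the pointwise identity $\mathbb{E}(f\curvearrowright P\mid\mathcal{D})\,\mathbb{E}(\chi_P\mid\mathcal{D})=\mathbb{E}(f\chi_P\mid\mathcal{D})$, then apply the defining property again to $\chi_P$; you simply chain the equalities starting from the right-hand side rather than the left. Your extra care about the set where $\mathbb{E}(\chi_P\mid\mathcal{D})=0$ is a point the paper leaves implicit.
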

\begin{proof}
  \begin{align*}
    \int f\chi_Pg\,d\mu
    &=\int\mathbb{E}(f\chi_P\mid\mathcal{D}) g\,d\mu\\
    &=\int \mathbb{E}(f\curvearrowright P\mid\mathcal{D}) \mathbb{E}(\chi_P\mid\mathcal{D})g\,d\mu\\
    &=\int \mathbb{E}(f\curvearrowright P\mid\mathcal{D}) \chi_P g\,d\mu.
  \end{align*}
\end{proof}

\begin{definition}
  Given $f:\Omega^W\rightarrow[0,1]$, a set $P\subseteq \Omega^W$, and two systems of neighborhoods $\mathcal{N}_d,\mathcal{N}_{d-1}$, we define
  \[f^{+\curvearrowright P}_{\mathcal{N}_d,\mathcal{N}_{d-1}}=\mathbb{E}(\{\vec y\mid f^+_{\mathcal{N}_d}(\vec y)>0\}\curvearrowright P\mid\mathcal{K}_{W,r_{d-1}}(\mathcal{N}^\sigma_{d-1})\}).\]
\end{definition}
This obscure definition is justified by its crucial appearance in Lemma \ref{thm:hypergraph_counting} below.  In practice, $P$ will have the form $T_{{W\choose r_d}}(\{\vec p_e(j)\})$ for some $\mathcal{N}_d$-type $\vec p$, so we will have partitioned $\Omega^W$ into sets $P$ of this form and then we can think of the functions $f^{+\curvearrowright P}_{\mathcal{N}_d,\mathcal{N}_{d-1}}$ as being a ``partition of unity'' applied to the function $\mathbb{E}(\{y\mid f^+_{\mathcal{N}_d}(\vec y)>0\}\mid\mathcal{K}_{W,r_{d-1}}(\mathcal{N}^\sigma_{d-1})\})$.

\begin{definition}

  For each $i\leq d$, let $\mathcal{N}_i$ be a system of neighborhoods and let $f:\Omega^W\rightarrow\mathbb{R}$.  We say a $\mathcal{N}_d,\ldots,\mathcal{N}_1$-type $\vec p_W$ with $\mathcal{R}(\vec p_1)=\emptyset$ is a \emph{dense type for $f$ in $\mathcal{N}_d,\ldots,\mathcal{N}_1$} if:
\begin{itemize}
\item $\vec p_d$ is a dense type for $f$ (as an $\mathcal{N}_d$-type)
\item for all $j$ and each $e\in{W\choose r_d}$, $\{\vec p_{i,e'}\}_{i<d,e'\in{e\choose r_i}}$ is a dense type for every element of $\mathcal{N}_d(j)$,
\item for all $j$ and each $e\in{W\choose r_d}$, $\{\vec p_{i,e'}\}_{i<d,e'\in{e\choose r_i}}$ is a positive dense type for $\vec p_{d,e}(j)$,
\item for every $E$, for sufficiently large $j$, $\{\vec p_i\}_{i<d}$ is a positive dense type for
  \[\{\vec z\mid f^{+\curvearrowright T_{{W\choose r_d}}(\{\vec p_{d,e}(j)\})}_{\mathcal{N}_d,\mathcal{N}_{d-1}}(\vec z)>1-\frac{1}{E}\}\]
  in $\mathcal{N}_{d-1},\ldots,\mathcal{N}_1$.
\end{itemize}
If, additionally, $f^+_{\mathcal{N}_d}(\vec p)>0$, we say $\vec p$ is a positive dense type for $f$ in $\mathcal{N}_d,\ldots,\mathcal{N}_1$.

\end{definition}




\begin{lemma}
  For any measurable $f:\Omega^k\rightarrow[0,1]$ and almost every $x$, $tp_{\mathcal{N}_d,\ldots,\mathcal{N}_1}(x)$ is a dense type for $f$, and for almost every $x$ with $f(x)>0$, $tp_{\mathcal{N}_d,\ldots,\mathcal{N}_1}(x)$ is a positive dense type for $f$.
\end{lemma}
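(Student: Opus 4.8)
The plan is to prove this by downward induction on $d$, the length of the sequence of systems of neighborhoods, peeling off $\mathcal{N}_d$ at each step and invoking the base case, which is essentially Lemma~\ref{thm:almost_every_lebesgue}. First, I would observe that for $d=1$ the statement reduces (modulo the diagonal set $\mathcal{R}(\vec p_1)\neq\emptyset$, which has measure $0$) to the assertion in Lemma~\ref{thm:almost_every_lebesgue} that $f^+_{\mathcal{N}_1}$ is defined almost everywhere, that almost every point is a point of density, and that almost every point with $f>0$ has positive density. So the real content is the inductive step.

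For the inductive step, suppose the statement holds for sequences of length $<d$. I would verify each of the four bullets in the definition of ``dense type for $f$ in $\mathcal{N}_d,\ldots,\mathcal{N}_1$'' in turn, each time on a co-null set, and then intersect. The first bullet ($\vec p_d=tp_{\mathcal{N}_d}(\vec x)$ is a dense $\mathcal{N}_d$-type for $f$) is again immediate from Lemma~\ref{thm:almost_every_lebesgue} applied to $\mathcal{N}_d$. For the second and third bullets, the key device is the last clause of the definition of a Keisler graded probability space (the Fubini-type identity): for each $j$ and each $e\in{W\choose r_d}$, the space $\Omega^W$ fibers over $\Omega^e$, and for $\mu_e$-almost every value $\vec x_e$ the fiber $\mathcal{N}_d^j(\vec x_e)$ is a genuine probability space; conversely, working inside a fixed cell $P\in\mathcal{N}_d^j$, the conditional measure on $P$ is again a Keisler-type graded space (or close enough), so I can apply the inductive hypothesis to $\chi_P$ and to $f$ restricted there, obtaining for almost every $\vec x$ that $\{\vec p_{i,e'}\}_{i<d}$ is a dense type for $P$ and a positive dense type for $\vec p_{d,e}(j)$. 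A countable intersection over all $j$, all $e$, and (for the third bullet) all cells $P$ with positive measure keeps us on a co-null set; the cells of measure $0$ contribute nothing since their union over all $j$ still has measure $0$ in the relevant sense — here one uses that $\mathcal{N}_d^j$ is a \emph{finite} partition for each $j$, so only countably many cells arise in total.

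The fourth bullet is the genuine obstacle, and it is where the weighted-projection machinery earns its keep. I need: for every $E$, for all sufficiently large $j$, the truncated sequence $\{\vec p_i\}_{i<d}$ is a \emph{positive} dense type (in $\mathcal{N}_{d-1},\ldots,\mathcal{N}_1$) for the set $Z_{j,E}=\{\vec z\mid f^{+\curvearrowright P_j}_{\mathcal{N}_d,\mathcal{N}_{d-1}}(\vec z)>1-\tfrac1E\}$, where $P_j=T_{{W\choose r_d}}(\{\vec p_{d,e}(j)\})$. The strategy is: by the first bullet, $\vec x_e$ (hence the point $\vec x_W$) is a dense point for $\chi_{\{f^+_{\mathcal{N}_d}>0\}}$, and where $f^+_{\mathcal{N}_d}(\vec x)>0$ this density is $1$; by the ``proper alignment'' hypothesis the conditioning $\sigma$-algebra $\mathcal{K}_{W,r_{d-1}}(\mathcal{N}_{d-1}^\sigma)$ is compatible with the cylinder structure of the sets $\vec p_{d,e}(j)$, so as $j\to\infty$ the cells $P_j$ shrink around $\vec x_W$ and the weighted projection $f^{+\curvearrowright P_j}_{\mathcal{N}_d,\mathcal{N}_{d-1}}$ evaluated near $\vec x_W$ tends to $1$ (a martingale / Lebesgue-differentiation argument in the style of the proof of Lemma~\ref{thm:almost_every_lebesgue}, using that $P_j$ is a decreasing sequence of sets with $\mathbb{E}(\chi_{P_j}\mid\mathcal{K}_{W,r_{d-1}}(\mathcal{N}_{d-1}^\sigma))(\vec x)>0$ eventually); hence $\vec x_W\in Z_{j,E}$ for $j$ large, and moreover $\vec x_W$ is in the interior of $Z_{j,E}$ in a suitable sense, so that (applying the inductive hypothesis once more, to the indicator of $Z_{j,E}$) $\{\vec p_i\}_{i<d}$ is a positive dense type for $Z_{j,E}$. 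The delicate points here are (i) exchanging the two limits $j\to\infty$ and the $\mathcal{N}_{d-1}$-refinement limit, which is exactly what proper alignment is designed to allow, and (ii) keeping the ``positive dense type'' conclusion rather than merely ``dense type'', which requires knowing $\vec z=\vec x_W$ actually lands in $Z_{j,E}$, not just near it — and this is why we need $f^+_{\mathcal{N}_d}(\vec x)>0$ in the hypothesis. Finally I would intersect the co-null sets obtained for each bullet, and (for the ``positive dense type'' conclusion of the lemma) intersect with the co-null set from Lemma~\ref{thm:almost_every_lebesgue} on which $f(\vec x)>0$ implies $f^+_{\mathcal{N}_d}(\vec x)>0$; this completes the induction. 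The main obstacle, as noted, is the interchange-of-limits argument verifying the fourth bullet, and I expect it to occupy the bulk of the actual proof.
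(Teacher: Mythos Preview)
Your inductive skeleton matches the paper's: induction on $d$, the diagonal set $\{\vec x:\mathcal{R}(\vec p_1)\neq\emptyset\}$ has measure $0$, the first bullet is Lemma~\ref{thm:almost_every_lebesgue}, and bullets two and three follow from the inductive hypothesis applied to the countably many cells $P\in\bigcup_j\mathcal{N}_d^j$. (One small simplification: you don't need to pass to any ``conditional Keisler space on $P$''---just apply the inductive hypothesis to $\chi_P$ as a function on all of $\Omega^k$ and note that almost every $\vec x\in P$ yields a positive dense type for $P$.)

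The divergence is at the fourth bullet. You invoke proper alignment and frame the difficulty as an interchange-of-limits problem; neither is what the paper does, and neither is needed. Proper alignment is used only later, in the counting lemma (Theorem~\ref{thm:hypergraph_counting}); it plays no role in the present lemma. The paper's argument for the fourth bullet is a direct global measure estimate rather than a pointwise convergence statement about $f^{+\curvearrowright P_j}(\vec x)$. Fix $E$ and $\delta>0$, let $A^+=\{f^+_{\mathcal{N}_d}>0\}$, and choose $j$ so large that $A^+$ is contained, up to measure $\delta/2$, in cells $P\in\mathcal{K}_{W,r_d}(\mathcal{N}_d^j)$ with $\mu(A^+\cap P)/\mu(P)>1-\delta/(2E)$ (this is just the Lebesgue-density property of the $\mathcal{N}_d^\sigma$-measurable set $A^+$). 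Inside such a $P$ one has, by the defining identity for the weighted projection,
\[
\frac{1}{\mu(P)}\int_P f^{+\curvearrowright P}_{\mathcal{N}_d,\mathcal{N}_{d-1}}\,d\mu=\frac{\mu(A^+\cap P)}{\mu(P)}>1-\frac{\delta}{2E},
\]
so Markov's inequality gives that $S_E=\{f^{+\curvearrowright P}_{\mathcal{N}_d,\mathcal{N}_{d-1}}>1-1/E\}$ occupies more than a $(1-\delta/2)$ fraction of $P$. Now apply the inductive hypothesis to $\chi_{S_E}$: almost every $\vec x\in S_E\cap P$ has $\{\vec p_i\}_{i<d}$ a positive dense type for $S_E$. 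Summing over cells, the failure set for this $E$ has measure $<\delta$; since $\delta$ was arbitrary it has measure $0$. There is no interchange of limits and no alignment hypothesis---just one Markov step sandwiched between two appeals to density.
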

\begin{proof}
The set of $x$ so that $\mathcal{R}(tp_{\mathcal{N}_d,\ldots,\mathcal{N}_1}(x))\neq\emptyset$ has measure $0$, so we may ignore these points.
  
We now proceed by induction on $d$.  $tp(x)_{\mathcal{N}_d}$ is a dense type for $f$ in $\mathcal{N}_d$ exactly when $x$ is, and we have already shown that the set of $x$ such that $x$ is dense point for $f$ has measure $1$.

For each $P\in\bigcup_j\mathcal{N}_d(j)$, by the inductive hypothesis the set of $x$ such that $tp_{\mathcal{N}_{d-1},\ldots,\mathcal{N}_1}(x)$ is a dense type for $P$ has measure $1$.  Since there are countably many elements in $\bigcup_j\mathcal{N}_d(j)$, the set of of $x$ so that $tp_{\mathcal{N}_{d-1},\ldots,\mathcal{N}_1}(x)$ is a dense type for all of them also has measure $1$.

Also, for each $P\in\bigcup_j\mathcal{N}_d(j)$, the set of $x\in P$ such that $tp_{\mathcal{N}_{d-1},\ldots,\mathcal{N}_1}(x)$ is not a positive dense type for $P$ has measure $0$, and so again, except on a set of measure $0$, $tp_{\mathcal{N}_{d-1},\ldots,\mathcal{N}_1}(x)$ will be a positive dense type for $\mathcal{N}_d^j(x)$.

  It remains to show that, for each $\delta>0$ and each $E$, the set of points failing the fourth condition above with $E$ has measure $<\delta$.

  Let $\delta,E$ be given.  Let $A^+=\{\vec y\mid f^+_{\mathcal{N}_d}(\vec y)>0\}$.  By choosing $j$ sufficiently large, we can arrange that $A^+$ is contained, except for a set of measure $<\delta/2$, in elements $P\in\mathcal{K}_{W,r_d}(\mathcal{N}^\sigma_d(j))$ such that $\frac{\mu(A^+\cap P)}{\mu(P)}>1-\frac{\delta}{2E}$.

  Within any such $P$,
  \begin{align*}
    1-\frac{\delta}{2}
    &<\frac{1}{\mu(P)}\mu(A^+\cap P)\\
    &=\frac{1}{\mu(P)}\int \chi_{A^+}\chi_P\,d\mu\\
    &=\frac{1}{\mu(P)}\int \mathbb{E}(\chi_{A^+}\curvearrowright P\mid\mathcal{K}_{W,d-1}(\mathcal{N}^\sigma_{d-1}))\chi_P\,d\mu\\
    &=\frac{1}{\mu(P)}\int f^{+\curvearrowright P}_{\mathcal{N}_d,\mathcal{N}_{d-1}}\chi_P\,d\mu
  \end{align*}
  and therefore
  \[\frac{\mu(\{\vec y\in P\mid f^{+\curvearrowright P}_{\mathcal{N}_d,\mathcal{N}_{d-1}}(\vec y)>1-\frac{1}{E}\})}{\mu(P)}>1-\frac{\delta}{2}\]
  as well.

  Let $S_E=\{\vec y\mid f^{+\curvearrowright P}_{\mathcal{N}_d,\mathcal{N}_{d-1}}(\vec y)>1-\frac{1}{E}\}$.  Inductively, the set of $\vec x\in S_E\cap P$ such that $tp_{\mathcal{N}_{d-1},\ldots,\mathcal{N}_1}(\vec x)$ is not a positive dense type for $S_E$ has measure at most $\delta\mu(P)/2$, and therefore the set of $\vec x\in S_E$ such that $tp_{\mathcal{N}_{d-1},\ldots,\mathcal{N}_1}(\vec x)$ is not a positive dense type for $S_E$ has measure at most $\delta$.


\end{proof}

\subsection{Counting and Removal}

The next result is the analog of the hypergraph counting lemma.  We suppose we have a configuration $\{f_e\}_{e\in S}$ with $S$ a set of subsets of $W$, and we have points $\vec x_W$ with $f_e(\vec x_e)>0$ which is ``sufficiently generic'', in the sense that, for each $e$, $tp_{\mathcal{N}_d,\ldots,\mathcal{N}_1}(\vec x_e)$ is a positive dense type for $f_e$, then actually we can expand this single point into a set of points of positive measure, showing that $t_S(\{f_e\})>0$.

\begin{theorem}\label{thm:hypergraph_counting}
  Let $W$ be a finite set, let $\mathcal{N}_d,\ldots,\mathcal{N}_1$ be a properly aligned sequence of systems of neighborhoods so that $\mathcal{N}_i$ is a nested system of neighborhoods with arity $r_i$, let $S$ be a set of subsets of $W$, and suppose that $\vec p_W$ is a $\mathcal{N}_d,\ldots,\mathcal{N}_1$-type such that:
  \begin{itemize}
  \item for each $e\in S$, the restriction $\vec p_W$ is a positive dense type for $f_e$, and
  \item for each $e\in S$, either:
    \begin{itemize}
    \item $f_e$ is $\mathcal{K}_{e,r_d}(\mathcal{N}^\sigma_d)$-measurable, or
    \item for every $e'\in S\setminus\{e\}$, the function $\vec x_e\mapsto \int f_{e'}(\vec x_{e\cup e'})\,d\mu(\vec x_{e'\setminus e})$ is $\mathcal{K}_{e,r_d}(\mathcal{N}^\sigma_d)$-measurable.
    \end{itemize}
  \end{itemize}
  Then $t_S(\{f_e\})>0$.
  \end{theorem}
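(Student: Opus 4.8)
The plan is to argue by induction on $d$, peeling off the top system $\mathcal{N}_d$ at each step and reducing to an instance of the theorem for the properly aligned sequence $\mathcal{N}_{d-1},\ldots,\mathcal{N}_1$. When $d=1$ we have $r_1=1$, a positive dense type is simply a positive point of density, and the two measurability alternatives are those of Theorem \ref{thm:graph_counting}; since the proof of that theorem uses only the values $f^+_e$ at the type and never an actual realizing tuple, it applies unchanged. For $d>1$, I would first apply Lemma \ref{thm:reduction} with $\mathcal{D}_e=\mathcal{K}_{e,r_d}(\mathcal{N}^\sigma_d)$: the dichotomy in the hypothesis of the present theorem is precisely the dichotomy required by Lemma \ref{thm:reduction}, so $t_S(\{f_e\})=t_S(\{(f_e)^+_{\mathcal{N}_d}\})$, using $\mathbb{E}(f_e\mid\mathcal{K}_{e,r_d}(\mathcal{N}^\sigma_d))=(f_e)^+_{\mathcal{N}_d}$ from Lemma \ref{thm:almost_every_lebesgue}. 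Every clause in the definition of ``positive dense type for $f_e$ in $\mathcal{N}_d,\ldots,\mathcal{N}_1$'' mentions $f_e$ only through $(f_e)^+_{\mathcal{N}_d}$, through the set $\{(f_e)^+_{\mathcal{N}_d}>0\}$ and the function $(f_e)^{+\curvearrowright P}_{\mathcal{N}_d,\mathcal{N}_{d-1}}$, or not at all; all of these are unchanged on replacing $f_e$ by $(f_e)^+_{\mathcal{N}_d}$, so after renaming we may assume each $f_e$ is $\mathcal{K}_{e,r_d}(\mathcal{N}^\sigma_d)$-measurable, $f_e=(f_e)^+_{\mathcal{N}_d}$, and $A_e:=\{f_e>0\}\in\mathcal{K}_{e,r_d}(\mathcal{N}^\sigma_d)$.

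Now set $\epsilon:=\min_{e\in S}(f_e)^+_{\mathcal{N}_d}(\vec p_W)>0$ (the value being taken, as in the theorem statement, at the restriction of $\vec p_W$ to $e$), and fix a large integer $E$. For $e\in S$ and $j\in\mathbb{N}$ write $P_e^{(j)}:=T_{{e\choose r_d}}(\{\vec p_{d,a}(j)\}_{a\in{e\choose r_d}})$. Applying the fourth clause of ``$\vec p_W$ is a dense type for $f_e$'' to each of the finitely many $e\in S$, I would choose a single $j$ so that, for all $e\in S$, the restriction of $\{\vec p_i\}_{i<d}$ to $e$ is a positive dense type for
\[Z_e:=\{\vec z\in\Omega^e\mid (f_e)^{+\curvearrowright P_e^{(j)}}_{\mathcal{N}_d,\mathcal{N}_{d-1}}(\vec z)>1-\tfrac{1}{E}\}\]
in $\mathcal{N}_{d-1},\ldots,\mathcal{N}_1$; enlarging $j$ if necessary (using the first clause of dense type, whereby $\tfrac{1}{\mu(P_e^{(j)})}\int_{P_e^{(j)}}|f_e-(f_e)^+_{\mathcal{N}_d}(\vec p_W)|\,d\mu\to 0$), I also ensure that $D_e:=\{\vec y\in P_e^{(j)}\mid f_e(\vec y)>\epsilon/2\}$ occupies all but a negligible fraction of $P_e^{(j)}$. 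Since $(f_e)^{+\curvearrowright P_e^{(j)}}_{\mathcal{N}_d,\mathcal{N}_{d-1}}$ is by construction $\mathcal{K}_{e,r_{d-1}}(\mathcal{N}^\sigma_{d-1})$-measurable, each $Z_e$ lies in $\mathcal{K}_{e,r_{d-1}}(\mathcal{N}^\sigma_{d-1})$, so $\chi_{Z_e}$ satisfies the first measurability alternative. The sequence $\mathcal{N}_{d-1},\ldots,\mathcal{N}_1$ is again properly aligned, so the inductive hypothesis applies to $\{\chi_{Z_e}\}_{e\in S}$ with the type $\{\vec p_i\}_{i<d}$ and yields
\[\mu\big(T_S(\{Z_e\})\big)=t_S(\{\chi_{Z_e}\})>0.\]

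It remains to turn this positive-measure set of ``$\mathcal{N}_{d-1}$-level configurations'' into a positive lower bound for $t_S(\{f_e\})$. Restricting the integral to the $\mathcal{N}_d$-cell $\Pi:=T_{{W\choose r_d}}(\{\vec p_{d,a}(j)\}_{a\in{W\choose r_d}})$ and using $f_e\ge(\epsilon/2)\chi_{D_e}$ gives $t_S(\{f_e\})\ge(\epsilon/2)^{|S|}\,\mu\big(\Pi\cap T_S(\{D_e\})\big)$, so it suffices to prove $\mu(\Pi\cap T_S(\{D_e\}))>0$. Here I would apply the weighted-projection lemma with $P=\Pi$ and $\mathcal{D}=\mathcal{K}_{W,r_{d-1}}(\mathcal{N}^\sigma_{d-1})$, together with a conditional reduction step showing that, inside the fixed cell $\Pi$, the conditional expectation of $\chi_{T_S(\{D_e\})}$ relative to $\mathcal{K}_{W,r_{d-1}}(\mathcal{N}^\sigma_{d-1})$ is — up to errors controlled by the negligible measure of $P_e^{(j)}\setminus D_e$ — bounded below by the product over $e$ of the marginal weighted projections $(f_e)^{+\curvearrowright P_e^{(j)}}_{\mathcal{N}_d,\mathcal{N}_{d-1}}(\vec x_e)$, hence by $(1-1/E)^{|S|}$ on $T_S(\{Z_e\})$ (modulo a small set). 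This conditional reduction is exactly where proper alignment of $\mathcal{N}_d$ in $\mathcal{N}_{d-1}$, and clauses two and three of the dense type (which say the cells $\vec p_{d,a}(j)$ themselves carry dense $\mathcal{N}_{d-1},\ldots,\mathcal{N}_1$-types), get used: they guarantee that when one integrates out the coordinates outside a given $e$ all the ``overlap'' factors that arise remain $\mathcal{K}_{e,r_{d-1}}(\mathcal{N}^\sigma_{d-1})$-measurable, so the weighted-projection lemma can be iterated one coordinate at a time inside $\Pi$. Finally $\mu(\Pi\cap T_S(\{Z_e\}))>0$, because $\Pi$ has positive measure and each $Z_e$ lies in the domain of $(f_e)^{+\curvearrowright P_e^{(j)}}_{\mathcal{N}_d,\mathcal{N}_{d-1}}$, i.e. $\mathbb{E}(\chi_{P_e^{(j)}}\mid\mathcal{K}_{e,r_{d-1}}(\mathcal{N}^\sigma_{d-1}))>0$ there; choosing $E$ and then $j$ large makes the error terms lose to this positive quantity, completing the induction.

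The step I expect to be the main obstacle is this last conversion: showing that a fixed $\mathcal{N}_d$-cell behaves, relative to $\mathcal{N}_{d-1}$, like a quasirandom object, so that the events $\{\vec x_e\in A_e\}$ for $e\in S$ are conditionally independent with the predicted marginals — the higher-arity analogue of the fact that in Theorem \ref{thm:graph_counting} the neighborhood $\mathcal{N}^j(\vec x_W)$ factors as a product. Making this precise requires pushing the cylinder-intersection structure of $\Pi$ through the proper-alignment hypothesis so that every overlap encountered is measurable at level $d-1$, and then tracking the quantitative losses carefully enough that $E$ and $j$ can be chosen to close the argument; this is the one part of the proof with no counterpart in the graph case.
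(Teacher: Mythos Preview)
Your overall plan---induction on $d$, the base case being Theorem \ref{thm:graph_counting}, the reduction via Lemma \ref{thm:reduction} to $(f_e)^+_{\mathcal{N}_d}$, and the definition of the sets $Z_e$ using the weighted projection $(f_e)^{+\curvearrowright P_e^{(j)}}_{\mathcal{N}_d,\mathcal{N}_{d-1}}$---matches the paper exactly. The gap is in how you invoke the inductive hypothesis.

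You apply the inductive hypothesis to the family $\{\chi_{Z_e}\}_{e\in S}$ alone, obtaining $\mu(T_S(\{Z_e\}))>0$, and then try to argue separately that the intersection with the cell $\Pi=T_{{W\choose r_d}}(\{\vec p_{d,a}(j)\})$ is still positive. That separate step is where your argument becomes vague (you yourself flag it as ``the main obstacle''), and it does not follow from what you have: $\mu(T_S(\{Z_e\}))>0$ and $\mu(\Pi)>0$ do not by themselves imply $\mu(\Pi\cap T_S(\{Z_e\}))>0$. The conditional-independence heuristic you describe is not what is actually needed.

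The paper avoids this entirely by enlarging the index set before applying induction: set $S'=S\uplus{W\choose r_d}$, take $g_e=\chi_{Z_e}$ for $e\in S$ and $g_e=\chi_{\vec p_{d,e}(j)}$ for $e\in{W\choose r_d}$. The second and third clauses of ``dense type'' say precisely that $\{\vec p_i\}_{i<d}$ is a positive dense type for each $\vec p_{d,e}(j)$; proper alignment of $\mathcal{N}_d$ in $\mathcal{N}_{d-1}$ says exactly that each $\chi_{\vec p_{d,e}(j)}$ satisfies the \emph{second} measurability alternative at level $d-1$. So the inductive hypothesis applies to $S'$ and yields directly
\[
\mu\bigl(T_S(\{Z_e\})\cap\Pi\bigr)=t_{S'}(\{g_e\})>0.
\]
Once you have this, the ``conversion'' step you worry about is a one-line union bound: for each $e_0\in S$, the definition of $Z_{e_0}$ together with the weighted-projection lemma gives
\[
\mu\bigl(\{\vec y_W\in T_S(\{Z_e\})\cap\Pi\mid \vec y_{e_0}\notin A^+_{e_0}\}\bigr)<\tfrac{1}{|S|+1}\,\mu\bigl(T_S(\{Z_e\})\cap\Pi\bigr),
\]
so summing over $e_0$ leaves a positive-measure remainder in $T_S(\{A^+_e\})$. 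No conditional-independence argument is required. (Your detour through the sets $D_e=\{f_e>\epsilon/2\}$ is also unnecessary: the paper works directly with $A^+_e=\{(f_e)^+_{\mathcal{N}_d}>0\}$, since $t_S(\{\chi_{A^+_e}\})>0$ already forces $t_S(\{(f_e)^+_{\mathcal{N}_d}\})>0$.)
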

\begin{proof}
  We proceed by induction on $d$.  When $d=1$, this is exactly Theorem \ref{thm:graph_counting}.

So suppose $d>1$.  By Lemma \ref{thm:reduction} with $\mathcal{D}_e=\mathcal{K}_{e,r_d}(\mathcal{N}_d^\sigma)$ for all $e$, we have $t_S(\{f_e\})=t_S(\{(f_e)^+_{\mathcal{N}_d}\})$.  Since $\vec p_{e}$ is a positive dense type of $f_e$, also $(f_e)^+_{\mathcal{N}_d}(\vec p_{e})>0$ for each $e$.  Let $A^+_e=\{\vec y_e\mid (f_e)^+_{\mathcal{N}_d}(\vec y_e)>0\}$.  It suffices to show that $t_S(\{\chi_{A^+_e}\})>0$.

Choose $j$ sufficiently large.  For each $e\in S$, let
\[A^\flat_e=\{\vec z\mid (\chi_{A^+_e})^{+\curvearrowright T_{{e\choose r_d}}(\{\vec p_{d,e'}(j)\})}_{\mathcal{N}_d,\mathcal{N}_{d-1}}(\vec z)>1-\frac{1}{|S|+1}\},\]
so $\{\vec p_{i,e}\}_{i<d}$ is a positive dense type for $A^\flat_e$ in $\mathcal{N}_{d-1},\ldots,\mathcal{N}_1$.  Let $S'=S\uplus{W\choose r_d}$; for $e\in S$, we let $g_e=\chi_{A^\flat_e}$, and for $e\in{W\choose r_d}$, we let $g_e=\chi_{\vec p_{d,e}(j)}$.

Then, for $e\in S$, $g_e$ is $\mathcal{K}_{e,r_{d-1}}(\mathcal{N}^\sigma_{d-1})$-measurable.  Therefore, since $\mathcal{N}_d$ is properly situated in $\mathcal{N}_{d-1}$, the inductive hypothesis applies, so $t_{S'}(\{g_e\})>0$.

Therefore 
\[\mu(T_S(\{A^\flat_e\})\cap T_{{W\choose r_d}}(\{\vec p_{d,e}(j))\})>0.\]
Consider some $\vec y_W\in \left[T_S(\{A^\flat_e\})\setminus T_S(\{A^+_e\})\right]\cap T_{{W\choose r_d}}(\vec p_{d,e}(j))$.  There must be some $e_0\in S$ such that $\vec y_{e_0}\in A^\flat_{e_0}\setminus A^+_{e_0}$.  For each $e_0\in S$,
\begin{align*}
  &\mu(\{\vec y_W\in T_{{W\choose r_d}}(\vec p_{d,e}(j))\mid \vec y_{e_0}\in T_S(\{A^\flat_e\})\setminus A^+_{e_0}\})\\
  =&\int \chi_{A^\flat_{e_0}}(1-\chi_{A^+_{e_0}})\prod_{e\in{W\choose r_d}}\chi_{\vec p_{d,e}(j)}\cdot\prod_{e\in S\setminus\{e_0\}}\chi_{A^\flat_e}\,d\mu\\
  =&\int \chi_{A^\flat_{e_0}}(1-\chi_{A^+_{e_0}})\prod_{e'\in{e_0\choose r_d}}\chi_{\vec p_{d,e'}(j)}\cdot\prod_{e\in S\setminus\{e_0\}}\chi_{A^\flat_e}\prod_{e\in{W\choose r_d}\setminus{e_0\choose r_d}}\chi_{\vec p_{d,e}(j)}\,d\mu\\
  =&\int \chi_{A^\flat_{e_0}}(1-(\chi_{A^+_e})^{+\curvearrowright T_{{e_0\choose r_d}}(\{\vec p_{d,e'}\})}) \prod_{e'\in{e_0\choose r_d}}\chi_{\vec p_{d,e'}(j)}\cdot\prod_{e\in S\setminus\{e_0\}}\chi_{A^\flat_e}\prod_{e\in{W\choose r_d}\setminus{e_0\choose r_d}}\chi_{\vec p_{d,e}(j)}\,d\mu\\
  &<\frac{1}{|S|+1}\mu(T_S(\{A^\flat_e\})\cap T_{{W\choose r_d}}(\{\vec p_{d,e}(j)\})).
\end{align*}
Therefore
\[\mu(\left[T_S(\{A^\flat_e\})\setminus T_S(\{A^+_e\})\right]\cap T_{{W\choose r_d}}(\{\vec p_{d,e}(j)\})))<\frac{|S|}{|S|+1}\mu(T_S(\{A^\flat_e\})\cap T_{{W\choose r_d}}(\{\vec p_{d,e}(j)\}))),\]
so
\[t_S(\{A^+_e\})\geq\mu(T_S(\{A^\flat_e\})\cap \vec p_d(j))>\frac{1}{|S|+1}\mu(T_S(\{A^\flat_e\})\cap T_{{W\choose r_d}}(\{\vec p_{d,e}(j)\})))>0.\]
\end{proof}

\begin{theorem}
  Suppose $H=(W,F)$ is a finite $k$-graph and $G=(\Omega,E)$ is a $k$-graph with a countably approximated atomless Keisler graded probability space on $\Omega$ with $E\in\mathcal{B}^0_k$.  If $t_H(G)=0$ then there is a symmetric $E'\subseteq E$ such that $E\setminus E'$ is a measure $0$ set contained in an intersection of sets in $\mathcal{B}^0_k$ and, taking $G'=(\Omega,E')$, $T_H(G')=\emptyset$.
\end{theorem}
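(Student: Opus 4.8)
The plan is to repeat the proof of Theorem~\ref{thm:graph_removal} one arity higher, using Theorem~\ref{thm:hypergraph_counting} in place of Theorem~\ref{thm:graph_counting} together with the \emph{standard example} of a properly aligned sequence of systems of neighborhoods --- a sequence $\mathcal{N}_{k-1},\ldots,\mathcal{N}_1$ with $\mathcal{N}_i$ of arity $i$ --- and defining $E'$ to be the set of $k$-tuples of $E$ whose type is a \emph{positive dense type} for $\chi_E$. The density lemma for types will give $\mu(E\setminus E')=0$, and Theorem~\ref{thm:hypergraph_counting} will show that a copy of $H$ inside $E'$ forces $t_H(G)>0$.

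Concretely, I would first build a symmetric system of neighborhoods $\mathcal{N}_i$ of arity $i$ for each $1\le i\le k-1$ so that the sequence $\mathcal{N}_{k-1},\ldots,\mathcal{N}_1$ (with $r_i=i$) is properly aligned and each $\mathcal{N}_i^\sigma$ contains, modulo null sets, the given countable algebra $\mathcal{B}_i^0$; such a sequence is obtained by a dovetailed refinement in which, at each stage, one captures one more set of each $\mathcal{B}_i^0$, one more overlap integral of an already-constructed cell $C\in\mathcal{N}_{i+1}$, and --- using atomlessness --- splits the largest remaining cell, so that $\max_{P\in\mathcal{N}_i^j}\mu_i(P)\to 0$. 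Then, identifying $\Omega^k$ with $\Omega^{[k]}$, let $E'\subseteq E$ be the symmetric set of those $\vec x\in E$ whose $k$ coordinates have pairwise distinct $\mathcal{N}_1$-types and for which $tp_{\mathcal{N}_{k-1},\ldots,\mathcal{N}_1}(\vec x)$ is a positive dense type for $\chi_E$; symmetry is automatic since each $\mathcal{N}_i$ is symmetric. The set of $\vec x$ with two coordinates of equal $\mathcal{N}_1$-type is contained (after relabeling) in $\bigcap_j\bigcup_{P\in\mathcal{N}_1^j}(P\times P\times\Omega^{k-2})$, of measure at most $\lim_j\max_{P\in\mathcal{N}_1^j}\mu_1(P)=0$, and by the density lemma for types almost every $\vec x$ with $\chi_E(\vec x)>0$ has a positive dense type for $\chi_E$; hence $\mu(E\setminus E')=0$.

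Now suppose $G'=(\Omega,E')$ had a copy of $H$, say $\vec x_W\in T_H(G')$, and apply Theorem~\ref{thm:hypergraph_counting} with $\vec p_W=tp_{\mathcal{N}_{k-1},\ldots,\mathcal{N}_1}(\vec x_W)$, $S=F$, and $f_e=\chi_E$ for $e\in F$. For each $e\in F$ we have $\vec x_e\in E'$, so the coordinates of $\vec x_e$ have distinct $\mathcal{N}_1$-types and the restriction of $\vec p_W$ to $e$ is $tp_{\mathcal{N}_{k-1},\ldots,\mathcal{N}_1}(\vec x_e)$, a positive dense type for $\chi_E$ --- the first hypothesis. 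For the measurability hypothesis, note that for $e\ne e'$ in $F$ the function $\vec x_e\mapsto\int\chi_E(\vec x_{e'})\,d\mu(\vec x_{e'\setminus e})$ depends only on the $\le k-1$ coordinates $\vec x_{e\cap e'}$ and has the form $\vec x\mapsto\mu(E_{\vec x})$; by the last clause of the definition of \emph{countably approximated} its level sets lie in $\mathcal{B}^0_{|e\cap e'|}$ up to null sets, so it is $\mathcal{K}_{e,k-1}(\mathcal{N}_{k-1}^\sigma)$-measurable. Theorem~\ref{thm:hypergraph_counting} then yields $t_H(G)=t_F(\{\chi_E\})>0$, contradicting the hypothesis; hence $T_H(G')=\emptyset$. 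Finally, exactly as in the proof of Theorem~\ref{thm:graph_removal}, $E\setminus E'$ is a null set contained in an intersection of finite unions of cells of the $\mathcal{N}_i^j$; each such finite union lies in $\mathcal{K}_{k,i}(\mathcal{B}^0_i)\subseteq\mathcal{B}^0_k$, and since $E\in\mathcal{B}^0_k$, this shows $E\setminus E'$ is contained in an intersection of sets in $\mathcal{B}^0_k$.

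The step I expect to be the main obstacle is the dovetailed construction of the neighborhood sequence: one must simultaneously arrange proper alignment (so that the inductive step of Theorem~\ref{thm:hypergraph_counting} applies at every level) and the capture of the $\mathcal{B}_i^0$ (so that the hypotheses above hold), while keeping the cells shrinking. This is routine but genuinely fiddly; once it is in place the rest is a line-by-line transcription of the graph case, since all of the substantive new content has already been absorbed into Theorem~\ref{thm:hypergraph_counting} and the density lemma for types.
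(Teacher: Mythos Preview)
Your proposal is correct and follows essentially the same approach as the paper: choose the standard properly aligned sequence $\mathcal{N}_{k-1},\ldots,\mathcal{N}_1$, let $E'$ be the points of $E$ whose type is a positive dense type for $\chi_E$, and then invoke Theorem~\ref{thm:hypergraph_counting} on any witness in $T_H(G')$. The paper's own proof is a three-line sketch (``Nearly identical to the proof of Graph Removal\ldots''), and you have simply supplied the details it omits---the neighborhood construction, the verification of the measurability clause of Theorem~\ref{thm:hypergraph_counting} via the countably approximated hypothesis, and the containment of $E\setminus E'$ in an intersection of $\mathcal{B}^0_k$ sets.
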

\begin{proof}
  Nearly identical to the proof of Graph Removal, Theorem \ref{thm:graph_removal}.  Let $\mathcal{N}_{k-1},\ldots,\mathcal{N}_1$ be a sequence of systems of neighborhoods for the main example.  Then let $E'\subseteq E$ consist of the points in $E$ whose type is positive dense for $\chi_E$.  If $T_H(E')\neq\emptyset$ then any $\vec x_W\in T_H(E')$ satisfies the conditions of the previous lemma, and therefore $t_H(E)>0$.
\end{proof}

\begin{cor}
    For every finite $k$-graph $H=(W,F)$ and every $\epsilon>0$ there is a $\delta>0$ so that whenever $G=(V,E)$ is a $k$-graph with $t_H(G)<\delta$, there is a symmetric $E'\subseteq E$ with $|E\setminus E'|<\epsilon|V|^k$ such that, taking $G'=(V,E')$, $T_H(G')=\emptyset$.
\end{cor}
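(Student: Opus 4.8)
The plan is to deduce this finitary statement from the infinitary hypergraph removal theorem just proved, by the same compactness argument used for the graph removal corollary above, so I will only indicate the main steps. First I would assume the statement fails: fix a finite $k$-graph $H=(W,F)$ and $\epsilon>0$ for which no $\delta$ works, so for each $n$ there is a $k$-graph $G_n=(V_n,E_n)$ with $t_H(G_n)<1/n$ admitting no symmetric $H$-free $E'\subseteq E_n$ with $|E_n\setminus E'|<\epsilon|V_n|^k$. I would first observe that $|V_n|\to\infty$, since otherwise $t_H(G_n)<1/n$ already forces $T_H(G_n)=\emptyset$ for $n$ large.

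Next I would pass to an ultraproduct $(\Omega,E)$ of the $G_n$ along a nonprincipal ultrafilter, equipping $\Omega$ with the Keisler graded probability space generated by the internal (definable) sets together with the Loeb measure. The needed input, which I would cite from \cite{goldbring:_approx_logic_measure} rather than reprove, is that this is a countably approximated atomless Keisler graded probability space with $E\in\mathcal{B}^0_k$, the approximating algebras being (roughly) the definable sets. By \L{o}\'s's theorem, $t_H(\Omega,E)=\lim_n t_H(G_n)=0$, so the infinitary theorem applies and yields a symmetric $E'\subseteq E$ with $T_H(E')=\emptyset$ and with $E\setminus E'$ a null set contained in an intersection of sets from $\mathcal{B}^0_k$. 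Arranging these in a decreasing sequence of definable sets $Z_m$ with $\mu(Z_m)\downarrow 0$, I would pick an $m$ with $\mu(Z_m)<\epsilon$; then $E\setminus Z_m$ is symmetric, $H$-free, and obtained from $E$ by deleting a definable set of measure $<\epsilon$.

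Finally I would push the two first-order statements ``$\mu(Z_m)<\epsilon$'' and ``$(\Omega,E\setminus Z_m)$ contains no copy of $H$'' back through \L{o}\'s's theorem: for ultrafilter-many $n$, the interpretation of $Z_m$ in $G_n$ satisfies $|Z_m|<\epsilon|V_n|^k$ while $(V_n,E_n\setminus Z_m)$ is $H$-free, contradicting the choice of $G_n$. I do not expect a real obstacle here: the only step that is not pure bookkeeping is the passage from ``$E\setminus E'$ lies in an intersection of $\mathcal{B}^0_k$ sets'' to ``$E\setminus E'$ lies in a single small definable set,'' and that is immediate from countable additivity once the sets are made decreasing. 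This corollary is genuinely just the standard finitization of the infinitary theorem, exactly as in the graph case.
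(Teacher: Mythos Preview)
Your proposal is correct and follows exactly the approach the paper intends: the paper does not even write out a separate proof for this corollary, relying instead on the identical ultraproduct argument already sketched for the graph case (the corollary after Theorem~\ref{thm:graph_removal}), with $|V|^2$ replaced by $|V|^k$ and $\mathcal{B}^0_2$ by $\mathcal{B}^0_k$. Your outline matches that sketch step for step.
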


\subsection{Conditioning on Sets of Measure $0$}

Before going on, it will be convenient to consider the notion of picking a type ``uniformly at random''.  The natural way to pick a random type is to pick a random point $\vec x$ and take $tp_{\mathcal{N}_d,\ldots,\mathcal{N}_1}(\vec x)$.  (Note that, with probability $1$, such a type has no repeated $\mathcal{N}_1$-types, so we do not need to worry about that complication here.)

However what we will need later is to first pick $\mathcal{N}_1$-types, then the $\mathcal{N}_2$-type, and so on, and we will need to describe what it means to pick a $\mathcal{N}_2$-type randomly among the extensions of a given $\mathcal{N}_1$-type.

Because the types represent sets of measure $0$, we do not generally expect to be able to make sense of the probability of an event conditioned on being in a type $\vec p$.  However because these events are intersections of a well-defined family of positive measure events, we can make sense of conditioning on them as long as the right limits converge.

Say we have two systems of neighborhoods, $\mathcal{N}$ and $\mathcal{M}$ of arity $r\leq s$, respectively.  (For instance, $\mathcal{M}=\mathcal{N}_{i+1}$ while $\mathcal{N}=\mathcal{N}_i$.)  Then almost every $\mathcal{N}$-$s$-type $\vec p$ is a dense type for every $P$ in every $\mathcal{M}(c)$, since there are only countably many such sets.



For instance (in the standard example) choosing $\mathcal{N}_1$ types $p$ and $q$ gives us a measure $0$ rectangle $p\times q$; despite being measure $0$, for almost all $p$ and $q$ we can make sense of choosing a pair $(x,y)\in p\times q$ randomly and taking $tp_{\mathcal{N}_2}(x,y)$: the probability that $tp_{\mathcal{N}_2}(x,y)(j)=P$ is precisely the density of $P$ in the type $p\times q$.

So we may choose a $\mathcal{N}_d,\ldots,\mathcal{N}_1$-type by first choosing $\mathcal{N}_1$-types randomly, and then inductively using this process to choose the $\mathcal{N}_2$-types, then the $\mathcal{N}_3$-types, and so on.

The only thing we need to check is that this gives the same distribution as if we had simply chosen the type of a random point.

\begin{lemma}
The inductive method of choosing $\mathcal{N}_d,\ldots,\mathcal{N}_1$-types has the same distribution as choosing $tp_{\mathcal{N}_d,\ldots,\mathcal{N}_1}(\vec x)$ for a uniformly chosen $\vec x$.
\end{lemma}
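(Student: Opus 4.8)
The plan is to show that both procedures realize the same push‑forward of the ambient measure $\mu_W$ on the space of $\mathcal{N}_d,\ldots,\mathcal{N}_1$‑types, and for this it is enough to check that the two resulting distributions assign the same mass to every event of the form
\[\mathcal{E}=\{\vec p_W : \vec p_{i_\ell,e_\ell}(j_\ell)=P_\ell\text{ for }\ell=1,\ldots,L\},\]
since these events (cells of finite products of the partitions $\mathcal{N}_i^j$) generate the relevant $\sigma$‑algebra; write $P_A(\mathcal{E})$ and $P_B(\mathcal{E})$ for the masses assigned to $\mathcal{E}$ by the uniform‑point distribution and by the inductive procedure. Throughout I discard the $\mu_W$‑null set of points whose $\mathcal{N}_1$‑type has a repetition. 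For $i\le d$ let $\mathcal{G}_i$ be the $\sigma$‑algebra $\bigvee_{i'\le i}\mathcal{K}_{W,r_{i'}}(\mathcal{N}_{i'}^\sigma)$, the increasing union of the finite $\sigma$‑algebras $\mathcal{G}_i^n$ generated by the level‑$n$ cells $\bigcap_{i'\le i}\mathcal{N}_{i'}^n(\vec x)$; recording the level‑$\le i$ part of $tp_{\mathcal{N}_d,\ldots,\mathcal{N}_1}(\vec x)$ is the same as recording, for $\vec x$, the decreasing sequence of $\mathcal{G}_i^n$‑atoms containing it. In this language ``read off $tp$ of a uniform point'' is the push‑forward of $\mu_W$, while the inductive procedure disintegrates $\mu_W$ successively along $\mathcal{G}_1\subseteq\mathcal{G}_2\subseteq\cdots\subseteq\mathcal{G}_d$: the passage from $\mathcal{G}_{i-1}$ to $\mathcal{G}_i$ replaces a $\mathcal{G}_{i-1}$‑type $\vec q$ by a random $\mathcal{G}_i$‑type drawn from the law $\nu_{\vec q}$ that assigns a level‑$n$ cell $P$ the mass $\lim_{n'}\mu_W(P\cap\vec q(n'))/\mu_W(\vec q(n'))$ — well defined for $\mu_W$‑a.e.\ $\vec q$, as discussed above, because there are only countably many cells to handle. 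I then induct on $d$; the case $d=1$ is immediate, since there the inductive procedure \emph{is} ``choose a uniform $\vec x$ and output $tp_{\mathcal{N}_1}(\vec x)$'' (equivalently, pick the coordinates independently from $\mu_1$, using that $\mu_W$ restricts to the product measure on $\mathcal{B}_{W,1}$).

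For the inductive step, fix such an $\mathcal{E}$ and split its conditions into the \emph{lower} ones ($i_\ell<d$), which cut out a set $B\subseteq\Omega^W$, and the \emph{top} ones ($i_\ell=d$), which cut out a set $T\subseteq\Omega^W$. Each lower condition is a union of atoms of some $\mathcal{G}_{d-1}^{j_\ell}$, so $B\in\mathcal{G}_{d-1}$. The first $d-1$ stages of the procedure for $\mathcal{N}_d,\ldots,\mathcal{N}_1$ are exactly the procedure for $\mathcal{N}_{d-1},\ldots,\mathcal{N}_1$, so by the inductive hypothesis they produce a $\mathcal{G}_{d-1}$‑type distributed as the push‑forward of $\mu_W$ under $\vec x\mapsto \vec q(\vec x):=tp_{\mathcal{N}_{d-1},\ldots,\mathcal{N}_1}(\vec x)$, and the $d$‑th stage contributes the factor $\nu_{\vec q(\vec x)}(\text{top conditions})=\lim_n\mu_W(T\cap\vec q(\vec x)(n))/\mu_W(\vec q(\vec x)(n))$, where $\vec q(\vec x)(n)=\bigcap_{i<d}\mathcal{N}_i^n(\vec x)$ is the atom of $\vec x$ in $\mathcal{G}_{d-1}^n$. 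Hence
\[P_B(\mathcal{E})=\int_{B}\Big(\lim_{n\to\infty}\frac{\mu_W(T\cap\vec q(\vec x)(n))}{\mu_W(\vec q(\vec x)(n))}\Big)\,d\mu_W(\vec x).\]
Now $\mu_W(T\cap\vec q(\vec x)(n))/\mu_W(\vec q(\vec x)(n))=\mathbb{E}(\chi_T\mid\mathcal{G}_{d-1}^n)(\vec x)$ and $\mathcal{G}_{d-1}^n\uparrow\mathcal{G}_{d-1}$, so by the increasing martingale convergence theorem — the same computation that underlies Lemma \ref{thm:almost_every_lebesgue} — the bracketed limit equals $\mathbb{E}(\chi_T\mid\mathcal{G}_{d-1})(\vec x)$ for a.e.\ $\vec x$. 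Since $\chi_B$ is $\mathcal{G}_{d-1}$‑measurable, the tower property gives
\[P_B(\mathcal{E})=\int\chi_B\,\mathbb{E}(\chi_T\mid\mathcal{G}_{d-1})\,d\mu_W=\int\mathbb{E}(\chi_B\chi_T\mid\mathcal{G}_{d-1})\,d\mu_W=\mu_W(B\cap T).\]
But $B\cap T$ is exactly the set of $\vec x$ meeting all the conditions defining $\mathcal{E}$, so $\mu_W(B\cap T)=P_A(\mathcal{E})$. As $\mathcal{E}$ ranges over the generating events, $P_A=P_B$.

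I expect the real work to lie entirely in making the inductive procedure literally precise: that for a.e.\ $\mathcal{G}_{d-1}$‑type $\vec q$ the conditional law $\nu_{\vec q}$ exists and is a probability measure (the density statement for types quoted above, together with countability of the cell family); that an arbitrary finite conjunction of cell conditions factors as ``lower conditions on $\vec q$'' times ``top conditions under $\nu_{\vec q}$'', so that the displayed formula for $P_B(\mathcal{E})$ is justified; and that $\vec q(\vec x)(n)$ may be read interchangeably as $\bigcap_{i<d}\mathcal{N}_i^n(\vec x)$ and as the atom of $\vec x$ in the finite $\sigma$‑algebra $\mathcal{G}_{d-1}^n$. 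The $\mu_W$‑null set of tuples with repeated $\mathcal{N}_1$‑types must also be carried along harmlessly throughout. None of this is deep: the analytic content is confined to the $L^1$‑martingale convergence already packaged in Lemma \ref{thm:almost_every_lebesgue} together with the tower property of conditional expectation.
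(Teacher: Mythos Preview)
Your proof is correct and follows the same inductive strategy as the paper's: induct on the number of levels, identify the transition law at each stage with a conditional expectation (via the density/martingale limit already established in Lemma~\ref{thm:almost_every_lebesgue}), and integrate back. The paper's own proof is a three-line sketch that only verifies the single-cell marginals $\Pr[\vec p_{i+1}(j)=P]=\mu(P)$ at each stage, leaving the agreement of joint distributions implicit; your version is more thorough in that it checks equality on a generating family of \emph{joint} events across all levels and invokes the tower property explicitly, but the underlying idea is identical.
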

\begin{proof}
  By induction on $i\leq d$.  For $i=1$ these distributions have the same definition.  Suppose the claim holds for $i$.  The probability that we choose a $\mathcal{N}_{i+1}$-type $\vec p_{i+1}$ with $\vec p_{i+1}(j)=P$ is the integral, over all choices of $\vec p_i$ of $(\chi_P)^+_{\mathcal{N}_i}(\vec p_i)$.  By the inductive hypothesis, this is the integral over a random $\vec x$ of $(\chi_P)^+_{\mathcal{N}_i}(tp_{\mathcal{N}_i}(\vec x))$, which is equal to $\mu(P)$.
\end{proof}

\subsection{Induced Hypergraph Removal}

  

To prove induced hypergraph removal, we need a hypergraph counting lemma that allows repeated elements in tuples.  To do that, we need to generalize the notion of a likely configuration.

Suppose we have a tuple of points $\vec a_W$ where some of the points are repeated\footnote{Really, this should be talking about types rather than points, but we can more or less equate $a_w$ with $tp_{\mathcal{N}_1}(a_w)$, and this will be clearer without the added abstraction of talking about types.}.  Once again, we want to be able to ``wiggle the points'' so that we can replace them with nearby points which we will be able to apply Theorem \ref{thm:hypergraph_counting} to.  The complication is that now, in addition to the types of the points themselves, we need to be concerned with the types of the tuples they belong to: if we ``wiggle'' $a_1$, this also affects the neighborhood of $tp_{\mathcal{N}_2}(a_1,a_2)$.  Indeed, we can't really ``wiggle'' $a_1$ while holding $tp_{\mathcal{N}_2}(a_1,a_2)$ constant, because $tp_{\mathcal{N}_2}(a_1,a_2)$ completely determines $tp_{\mathcal{N}_1}(a_1)$.

So we have to wait until later in the counting process: the proof of Theorem \ref{thm:hypergraph_counting} inductively reduces a hypergraph counting problem to a problem about counting graphs.  In particular, prior to the last step of that process, we replace $tp_{\mathcal{N}_2}(a_1,a_2)$ with a positive measure approximation to it.  (Specifically, the set $A^\flat_{1,2}$ which we construct in that proof.)  At \emph{that} point, we can safely ``wiggle'' $a_1$, since we can just promise to remain within various sets which have positive measure.

With that in mind, we can prove our infinitary version of removal.  We state it in a general form, allowing a coloring $\rho$ of $k$-tuples and showing that, with a small change to $\rho$, we can simultaneously remove all copies of small colorings $(W,c)$ which appear with $0$ density in $(\Omega,\rho)$.  As always, the induced hypergraph case is when $\Sigma=\{0,1\}$.

\begin{theorem}\label{thm:induced_hypergraph_removal}
  Let $\Sigma$ be a finite set and let $(\Omega,\rho)$ be a coloring with a countably approximated atomless Keisler graded probability space on $\Omega$ and let $\rho:{\Omega\choose k}\rightarrow\Sigma$ be such that each $\rho^{-1}(\sigma)\in\mathcal{B}^0_2$.  For each $\epsilon>0$ there is a $\rho':{\Omega\choose k}\rightarrow\Sigma$ such that
  \[\mu(\{\vec x\mid\rho(\vec x)\neq \rho'(\vec x)\})<\epsilon,\]
  each $(\rho')^{-1}(\sigma)\in\mathcal{B}^0_2$, and, for all $(W,c)$, if $t_{W,c}(\Omega,\rho)=0$ then $T_{W,c}(\Omega,\rho')=\emptyset$.
\end{theorem}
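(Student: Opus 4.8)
The plan is to reproduce the proof of induced graph removal (Theorem~\ref{thm:induced_removal}) with two substitutions: the single arity-$1$ system of neighborhoods is replaced by a properly aligned sequence $\mathcal{N}_{k-1},\dots,\mathcal{N}_1$ (the standard example, $\mathcal{N}_i$ of arity $i$), and the arity-$1$ counting lemma is replaced by the hypergraph counting lemma, Theorem~\ref{thm:hypergraph_counting}. Fix such a sequence, with each $\mathcal{B}^0_i$ generated up to measure $0$ by the pieces of the $\mathcal{N}_i^j$, and set $f_\sigma=\chi_{\rho^{-1}(\sigma)}$, so $\sum_\sigma f_\sigma=1$. For a layered $\mathcal{N}_{k-1},\dots,\mathcal{N}_1$-type $\vec p$ of a $k$-tuple, let $\Sigma(\vec p)$ be the set of colors $\sigma$ for which $\vec p$ is a positive dense type for $f_\sigma$; for almost every point this set contains that point's own color, and its value is (up to small measure) governed by the $\mathcal{K}_{k,k-1}(\mathcal{N}_{k-1}^\sigma)$-measurable functions $(f_\sigma)^+_{\mathcal{N}_{k-1}}$, so it plays the role of the $E_0/E_1/E_{1/2}$ trichotomy of Theorem~\ref{thm:induced_removal}. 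Choose $j$ large, let $\mathcal{P}^j$ be the common refinement on $\Omega^k$ of the partitions induced by $\mathcal{N}_{k-1}^j,\dots,\mathcal{N}_1^j$, and --- as in the graph case --- choose independently and uniformly a representative for each piece of $\mathcal{N}_1^j$ (with a positive-measure surrogate fixed for the measure-$0$ pieces), inducing a representative $k$-tuple $\vec a_B$ for each $B\in\mathcal{P}^j$. With probability $1$ every tuple of distinct representatives is a positive dense type for each $f_\sigma$ in its $\Sigma$ and a dense type for the finitely many other relevant sets, and by averaging over the random choice we may also assume that for most choices the level-$j$ averages at the representatives agree with their limits to within $\epsilon/C$. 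Now define $\rho'$: on a piece $B$ whose vertices lie in distinct arity-$1$ pieces, set $\rho'\equiv\sigma$ on $B$ if $\Sigma(tp(\vec a_B))=\{\sigma\}$, and $\rho'=\rho$ on $B$ otherwise; pieces with a repeated arity-$1$ piece are handled by the blowup-and-Ramsey device below.

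The one genuinely new ingredient is a version of Theorem~\ref{thm:hypergraph_counting} that tolerates repeated $\mathcal{N}_1$-types among the vertices of the witnessing tuple, since a copy of $(W,c)$ in $\rho'$ may send several vertices of $W$ into a common arity-$1$ piece. I would obtain it by re-running the induction in the proof of Theorem~\ref{thm:hypergraph_counting} and splicing in the colored-blowup construction of Theorem~\ref{thm:induced_removal} at the right point. The difficulty is the one flagged just before that theorem: one cannot ``wiggle'' a vertex $a_w$ while holding $tp_{\mathcal{N}_2}(a_w,a_{w'})$ fixed, because a higher-arity type determines the lower-arity ones it refines. So the wiggle must be postponed to the last step of the reduction --- once the top-arity types $\vec p_{d,e}$ have been replaced by the positive-measure sets $A^\flat_e$ that appear in the proof of Theorem~\ref{thm:hypergraph_counting} --- at which point a vertex need only be kept inside finitely many positive-measure sets and is free to move. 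One then forms, over the set $A$ of representatives, the colored $m$-blowup $A\times[m]$, whose colors on distinct-copy subtuples record which sets $A^\flat_e$ the relevant subtuples belong to; Theorem~\ref{thm:hypergraph_counting} gives a positive count in the blowup, and applying Ramsey's theorem once per fiber $\{a\}\times[m]$ over a repeated type passes to a homogeneous sub-blowup realising any prescribed pattern of vertex repetitions. As in the graph case there are only finitely many homogeneity patterns, so one survives for blowups of every size, and that is the data fed back into the counting lemma. This is the step I expect to be the main obstacle.

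With the repeated-vertex counting lemma in hand, the removal step is essentially the graph argument. Suppose $T_{W,c}(\Omega,\rho')\ne\emptyset$ and take $\vec x_W$ in it; let $P_w\in\mathcal{N}_1^j$ be the piece of $x_w$ (replaced by its surrogate where necessary) and fix an ordering of $W$. For each $e\in{W\choose k}$, because $\rho'$ respects the $\Sigma$-trichotomy exactly as $E'$ respected $E_0/E_1/E_{1/2}$, the representative --- or, when $e$ has a repeated piece, the blown-up --- tuple attached to $e$ is a positive dense type for $f_{c(e)}$: if $B$ was rounded to $\sigma$ then $c(e)=\sigma\in\Sigma(tp(\vec a_B))$; if $B$ kept $\rho$ then $\rho(\vec x_e)=\rho'(\vec x_e)=c(e)$ together with the fact that $\vec x_e$'s type is (a.e.) a positive dense type for its color puts $c(e)$ in $\Sigma(tp(\vec a_B))$; and the repeated-piece case is absorbed by the homogeneity pattern from the Ramsey step, exactly as in Theorem~\ref{thm:induced_removal}. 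Feeding these positive dense types into the repeated-vertex counting lemma gives $t_{W,c}(\Omega,\rho)>0$, contradicting $t_{W,c}(\Omega,\rho)=0$; hence $T_{W,c}(\Omega,\rho')=\emptyset$ for every $(W,c)$ of density $0$.

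It remains to bound $\mu(\{\vec x\mid\rho(\vec x)\ne\rho'(\vec x)\})$ and to check measurability. A disagreeing tuple lies in a piece $B\in\mathcal{P}^j$ of one of a bounded number of exceptional kinds: a piece with a repeated arity-$1$ piece (total measure $\to0$ as $j\to\infty$, exactly as the diagonals are controlled in the graph and hypergraph proofs); a piece meeting an arity-$1$ piece of measure $0$ (measure $0$); a piece whose representative's level-$j$ averages have not converged to their limits within $\epsilon/C$ (total measure $<\epsilon/C$, by Lemma~\ref{thm:almost_every_lebesgue}, the dense-type lemma above, and the averaging over the random representatives); and a piece rounded to a color $\sigma$, over all of which the total measure of the set where $\rho\ne\sigma$ is $<\epsilon/C$, since $\Sigma(tp(\vec a_B))=\{\sigma\}$ forces $(f_\sigma)^+_{\mathcal{N}_{k-1}}\equiv1$ on the surrounding top-arity piece, so $f_\sigma$ is nearly $1$ there on average. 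Taking $C$ to be the number of cases and $j$ sufficiently large gives $\mu(\{\rho\ne\rho'\})<\epsilon$. Finally each $(\rho')^{-1}(\sigma)$ is a finite Boolean combination of pieces of the $\mathcal{N}_i^j$ and of the sets $\rho^{-1}(\tau)$, so it lies in the countable algebra $\mathcal{B}^0_k$.
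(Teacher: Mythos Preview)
Your outline tracks the paper's structure, but the step ``choose independently and uniformly a representative for each piece of $\mathcal{N}_1^j$, inducing a representative $k$-tuple $\vec a_B$ for each $B\in\mathcal{P}^j$'' is a genuine gap once $k\ge 3$. A cell $B\in\mathcal{P}^j$ is specified by pieces at every arity $1,\ldots,k-1$, but your $\vec a_B$ depends only on the arity-$1$ data; the tuple $(a_{P_1},\ldots,a_{P_k})$ lands in some arity-$2,\ldots,k-1$ cells that have no reason to match those of $B$. Since the quantity $\Sigma(tp(\vec a_B))$ you use to define $\rho'$ is governed by $(f_\sigma)^+_{\mathcal{N}_{k-1}}$, which lives at arity $k-1$, you are reading it off the wrong cell. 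Both halves of the argument then break: in the measure bound, ``$\Sigma(tp(\vec a_B))=\{\sigma\}$ forces $(f_\sigma)^+_{\mathcal{N}_{k-1}}\equiv 1$ on the surrounding top-arity piece'' controls only the arity-$(k-1)$ cell containing $\vec a_B$, not all $B$ with those singletons; in the removal check, when $\rho'=\rho$ on $B$ you have $c(e)=\rho(\vec x_e)$, which lies in $\Sigma(tp(\vec x_e))$ but not necessarily in $\Sigma(tp(\vec a_B))$, since the two types differ above arity $1$. (Relatedly, your $\rho'$ should force values into $\Sigma(tp(\vec a_B))$ even when that set has size $\ge 2$, not simply leave $\rho$ unchanged.)

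The paper resolves this by choosing, not points, but an inductive tower of types: first an $\mathcal{N}_1$-type in each arity-$1$ piece, then for each arity-$2$ piece over an already-chosen pair of $\mathcal{N}_1$-types a random $\mathcal{N}_2$-type refining both (via the random-extension procedure you skipped), and so on through arity $k-1$. This forces coherence --- configurations agreeing on a downward-closed set of subtuples receive the same sub-types there --- and is exactly why a staircase of fineness levels $c_{d-1}<\cdots<c_1$ (not a single $j$) is used, and why one has to introduce and redirect ``defective'' configurations where some higher piece has density $0$ over the lower types already chosen. Your Ramsey/blowup treatment of repeated singletons is essentially right in spirit, but it too rests on this coherent tower (the homogeneity function $\nu$ in the paper is indexed by configurations, not by arity-$1$ boxes), so you would need to build that first.
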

\begin{proof}
To emphasize the way the proof generalizes, we will state it in terms of a general sequence of systems of neighborhoods, $\mathcal{N}_{d-1},\ldots,\mathcal{N}_1$.  For this precise result, we can take $d=k$ and $r_i=i$ for all $i<d$, but nothing changes to consider a longer sequence of systems of neighborhoods, and we will need this case in the next section.
  
  \textbf{General Setup}:
  For each $\sigma\in\Sigma$, let $P_\sigma=\{\vec y\mid \rho(\vec y)=\sigma\}$ and $f_\sigma=\chi_{P_\sigma}$, so $T_{W,c}(\Omega,\rho)=T_{{W\choose k}}(\{P_{\sigma(e)}\}_{e\in{W\choose k}})$.  Let $\mathcal{P}=\{P_\sigma\mid\sigma\in\Sigma\}$, which is a partition of $k$-tuples from $\Omega$.  Choose a sequence $c_{d-1}<\cdots<c_1$ where each $c_j$ is sufficiently large relative to the sizes of $|\mathcal{N}_{j'}^{c_{j'}}|$ for $j'>j$ and so $c_1$ is large enough that the set of $k$-tuples with more than one point in the same element of $\mathcal{N}_1^{c_1}$ has measure $<\epsilon/3$.

  We will sometimes think of $\mathcal{P}$ as being analogous to $\mathcal{N}_d$; in particular, we define $r_d=k$.

  Our plan is this.  We have partitioned $\Omega^{r_{d-1}}$ into the elements of $\mathcal{N}_{d-1}^{c_{d-1}}$, then partitioned $\Omega^{r_{d-2}}$ into the elements of $\mathcal{N}_{d-2}^{c_{d-2}}$, which are much smaller, and so on.  By analogy to the proof of Theorem \ref{thm:induced_removal}, we will want to consider the partition of $\Omega^k$ into sets of the form $T_{\biguplus_{i<d}{[k]\choose r_i}}$ where, for each $e\in{[k]\choose r_i}$, we have $P_e\in\mathcal{N}_{i}^{c_{i}}$.

  What we would like to do is choose, from each of the components $\vec P=\{P_e\}$, a single element $\vec x^{\vec P}\in T_{\biguplus_{i<d}{[k]\choose r_i}}(\{P_e\})$ so that $tp_{\mathcal{N}_{k-1},\ldots,\mathcal{N}_1}(\vec x)$ is a dense type for each $P_\sigma$.  Then we will define $\rho'$ so that, on the set $T_{\biguplus_{i<d}{[k]\choose r_i}}(\{P_e\})$, $\rho'$ only takes values $\sigma$ such that $\chi_{P_\sigma}^+(\vec x^{\vec P})>0$.

  There are two complications.  In the proof of Theorem \ref{thm:induced_removal}, we had to deal separately with components of the form $P\times P$.  Here, analogously, we have to deal separately with the case where $\vec P_{\{i\}}=\vec P_{\{j\}}$ for $i\neq j$---that is, the case where two of the singleton components of $\vec P$ are the same element of $\mathcal{N}_1^{c_1}$.  As in Theorem \ref{thm:induced_removal}, these components account for a small amount of measure, so we have a great deal of freedom in how we define $\rho'$ on them.  We address this later, after dealing with the other components.

  The second complication is that when we choose $\vec x^{\vec P}$, we need the choices to be ``coherent'': if we have two components $\vec P$ and $\vec Q$ such that $\vec P_{\{i\}}=\vec Q_{\{j\}}$, we need $tp_{\mathcal{N}_1}(\vec x^{\vec P}_i)=tp_{\mathcal{N}_1}(\vec x^{\vec Q}_j)$.  More generally, if there is an $e_0$ such that $P_e=Q_e$ for $e\subseteq e_0$, we need $tp_{\mathcal{N}_{|e|}}(\vec x^{\vec P}_e)=tp_{\mathcal{N}_{|e|}}(\vec x^{\vec Q}_e)$ for $e\subseteq e_0$.  (Really, we need something slightly more general: if there is a bijection $\pi:[k]\rightarrow[k]$ and an $e_0$ such that $P_e=Q_{\pi(e)}$ for $e\subseteq e_0$ then we should have $tp_{\mathcal{N}_{|e|}}(\vec x^{\vec P}_e)=tp_{\mathcal{N}_{|e|}}(\vec x^{\vec Q}_{\pi(e)})$.)

  This is why we introduced the alternate method of selecting types in the previous section.  For each $P\in\mathcal{N}_1^{c_1}$ with positive measure, we will choose a type $p^P\subseteq P$.  (That is, so that $p^P(c_1)=P$.)  Then we will turn to pairs: given a triple $\vec P=\{P_{1,2},P_1,P_2\}$ we will take $\vec p^{\vec P}_1=p^{P_1}$, $\vec p^{\vec P}_2=p^{P_2}$, and as long as $P_{1,2}$ has positive density at $p^{P_1}\times p^{P_2}$, we will choose a $p^{\vec P}_{1,2}\subseteq P^{1,2}\cap(p^{P_1}\times p^{P_2})$.

  \textbf{Configurations}:
  Let us make all this precise.  As the discussion above suggests, we will want to work inductively, starting with the partition of $\Omega$ and working our way up to the partition of $\Omega^k$.  We will call the components of these partitions \emph{configurations}.  For $j\leq d$, 
  let us define a \emph{${\leq}j$-configuration} to be a collection $\{P_e\}_{e\in\biguplus_{[j']\leq \max\{j,d-1\}}{[r_j]\choose r_{j'}}}$ such that for each $e\in\biguplus_{j'\leq \max\{j,d-1\}}{[r_j]\choose r_{j'}}$, $P_e\in\mathcal{N}_{|e|}^{c_{|e|}}$.  (Note that ${\leq}d$-configurations are slightly disanalogous, since they do not have a ``top level'' component from $\mathcal{P}$.)

It will be convenient to restrict ${\leq}j$-configurations in the natural way: if $\vec P=\{P_e\}_{e\in\biguplus_{j'\leq \max\{j,d-1\}}{[r_j]\choose r_{j'}}}$ is a ${\leq}j$-configuration, $j_0<j$, and $e_0\in{[j]\choose j_0}$, we can define $\vec P\upharpoonright e_0$ to be the ${\leq}j'$-configuration $\{P_e\}_{e\in\biguplus_{j'\leq j_0}{[e_0]\choose r_{j'}}}$.

  For each $j\leq d$, 
  the ${\leq} j$-configurations partition $\Omega^{r_j}$ into sets of the form $T_{\biguplus_{j'\leq \max\{j,d-1\}}{[r_j]\choose r_{j'}}} (\{P_e\}_{e\in\biguplus_{j'\leq \max\{j,d-1\}}{[r_j]\choose r_{j'}}})$.

  We say a configuration $\{P_e\}$ \emph{has distinct singletons} if whenever $i,i'\in[j]$ with $i\neq i'$, $P_{\{i\}}\neq P_{\{i'\}}$.  We will first deal with the configurations with distinct singletons, and then deal with the remaining configurations.  Note that the configurations without distinct singletons account for a small amount of measure.

  We ultimately want to associate each ${\leq}j$-configuration $\vec P$ with a type $\vec p^{\vec P}$, which we do by induction on $j$.  However there is a technical issue we must address first.  If $\mu(T_{j'\leq \max\{j,d-1\}}(\{P_e\}))=0$ then we cannot choose a random type refining this configurations.  Slightly more generally, if $j>1$ then our choice of types for $j-1$-tuples may lead to $P_{[j]}$ having density $0$ at the corresponding type of $j$-tuples, which will also lead to us being unable to continue the process.  In both these cases, we will consider $\vec P$ a \emph{defective} configuration.  We therefore assign, to each configuration $\vec P$, an associated configuration $\vec Q^{\vec P}$, which is always non-defective.  In most cases, we will have $\vec Q^{\vec P}=\vec P$, but when $\vec P$ is defective, $\vec Q^{\vec P}$ will be a different configuration.  (As this name suggests, only a small amount of measure will be contained in defective configurations.)  We will then choose the type $\vec p^{\vec P}$ to refine $\vec Q^{\vec P}$.

  \textbf{Choosing representative types}:
  By induction on $j$, for each $j<d$ and each ${\leq}d$-configuration $\vec P=\{P_e\}$ with distinct singletons we will choose a ${\leq}j$-configuration $\vec Q^{\vec P}$ and a $\mathcal{N}_j,\ldots,\mathcal{N}_1$-type $\vec p^{\vec P}$.

  Once we have completed this definition for $j$, it is natural to define $\mathcal{N}_j,\ldots,\mathcal{N}_1$-$[r_{j'}]$-types for any ${\leq}j'$ configuration: when $\vec P$ is a ${\leq}j'$-configuration with $j'>j$, we define $\vec p^{\vec P\upharpoonright j}$ to be the type with $\vec p^{\vec P\upharpoonright j}_e=\vec p^{\vec P\upharpoonright e}_e$ for all $e\in\biguplus_{j_0\leq j}{[j']\choose j_0}$.

  We inductively arrange that:
  \begin{enumerate}
  \item the choices are cumulative: for $j'<j$ and $e_0\in{[j]\choose r_{j'}}$, $\vec Q^{\vec P}_{e_0}=\vec Q^{\vec P\upharpoonright e_0}_{e_0}$ and $\vec p^{\vec P}_{e_0}=\vec p^{\vec P\upharpoonright e_0}_{e_0}$,
  \item the choices are symmetric: if $\pi:[j]\rightarrow[j]$ is a permutation and $\vec P$, $(\vec Q^{\vec P^\pi})^{\pi^{-1}}=\vec Q^{\vec P}$ and $(\vec p^{\vec P^\pi})^{\pi^{-1}}=\vec p^{\vec P}$,
  \item the types $\vec p^{\vec P}$ refine the configurations: for each $\vec P$, each $j'\leq j$, and each $e\in{[j]\choose r_{j'}}$, $\vec p^{\vec P}(c_{j'})=\vec Q^{\vec P}_e$,
  \item $\vec p^{\vec P}$ is a positive dense type for $\vec Q^{\vec P}_{[j]}$,
  \item for every $j'\in(j,d]$ and every ${\leq}j'$-configuration $\vec P$ with distinct singletons, the type $\vec p^{\vec P\upharpoonright j}$ is a dense type for $\vec P_{[j']}$,
  \item few points belong to configurations represented by types which make the sets in the configuration very sparse: the set of points belonging to ${\leq }d$-configurations $\vec P$ with distinct singletons such that, for some $e_0\in\biguplus_{j'\in(j,d)}{[d]\choose r_{j'}}$, the density of $P_{e_0}$ at the $\mathcal{N}_j,\ldots,\mathcal{N}_1$-$e_0$-type $\vec p^{P\upharpoonright e_0}$ is $<\frac{\epsilon}{3k}$, has measure at most $\frac{\epsilon}{3}$.
  \end{enumerate}
We will describe a random construction of the entire sequence and then argue that, with positive probability, we can find a choice satisfying these conditions.
  
  For $j=1$, a ${\leq}1$-configuration is just a set $P\in\mathcal{N}_1^{c_1}$.  If $\mu(P)=0$, we take $Q^P$ to be some set in $\mathcal{N}_1^{c_1}$ with positive measure, otherwise we take $Q^P=P$.  We then take $\vec p^{P}$ to be $tp_{\mathcal{N}_1}(x)$ for a randomly chosen element $x\in Q^P$.  

  Suppose we have completed the construction for $j$.  Consider the equivalence classes consisting of ${\leq}j+1$-configurations $\vec P$ where we identify configurations under permutations of $[j+1]$; we will choose a single representative from each such equivalence class.

  We can choose $\vec Q^{\vec P}$ and $\vec p^{\vec P}$ as follows.  We first consider the non-defective configurations.  Consider a ${\leq}j+1$-configuration $\vec P$ with distinct singletons such that:
  \begin{enumerate}
  \item for each $e\subsetneq[r_{j+1}]$, we have $\vec Q^{\vec P}_e=\vec P_e$, and
  \item $\chi_{\vec P_{[r_{j+1}]}}^+(\{\vec p^{\vec P}_e\}_{e\subsetneq [r_{j+1}]})>0$.
  \end{enumerate}
  (Note that we will arrange to have $\chi_{\vec P_{[r_{j+1}]}}^+(\{\vec p^{\vec P}_e\}_{e\subsetneq [r_{j+1}]})$ exist because we will arrange for (5) to hold).  Then we set $\vec Q^{\vec P}_{[r_{j+1}]}=\vec P_{[r_{j+1}]}$ and choose $\vec p^{\vec P}_{[r_{j+1}]}$ to be a random refinement of $\vec Q^{\vec P}_{[r_{j+1}]}$ at $\{\vec p^{\vec P}_e\}_{e\subsetneq [r_{j+1}]}$, as described in the previous subsection.  (The choice of the $\vec p^{\vec P}$ is the only non-deterministic part of the construction.)


  Consider a defective configuration $\vec P$ such that for each $e\subsetneq [r_{j+1}]$, we have $\vec Q^{\vec P}_e=\vec P_e$ but $\chi_{\vec P_{[r_{j+1}]}}^+(\{\vec p^{\vec P}_e\}_{e\subsetneq [r_{j+1}]})=0$.  (That is, $\vec P\upharpoonright j$ was fine, but the top level component $\vec P_{[r_{j+1}]}$ makes it defective defective.)  Since $\{\vec p^{\vec P}_e\}_{e\subsetneq [r_{j+1}]}$ is a dense type for all $P\in\mathcal{N}_{j+1}^{c_{j+1}}$ and these sets are a partition, there is some $P\in\mathcal{N}_{j+1}^{c_{j+1}}$ with $\chi_P^+(\{\vec p^{\vec P}_e\}_{e\subsetneq [r_{j+1}]})>0$, and we take $\vec Q^{\vec P}_{[r_{j+1}]}=P$ and $\vec p^{\vec P}_{[r_{j+1}]}=\vec p^{\vec Q^{\vec P}}_{[r_{j+1}]}$.

  Finally, consider a defective configuration such that, for some $e\subsetneq [r_{j+1}]$, $\vec Q^{\vec P}_e\neq \vec P_e$.  Then we wish to simply follow along with the ``corrected'' configuration: define $\vec P'$ by $\vec P'_e=\vec Q^{\vec P}_e$ for $e\subsetneq[j+1]$ and $\vec P'_{[r_{j+1}]}=\vec P_{[r_{j+1}]}$, and set $\vec Q^{\vec P}_{[r_{j+1}]}=\vec Q^{\vec P'}_{[r_{j+1}]}$ (which was already defined in one of the previous cases) and $\vec p^{\vec P}_{[r_{j+1}]}=\vec p^{\vec P'}_{[r_{j+1}]}$.

  To satisfy symmetry, we define $\vec Q$ and $\vec p$ for permutations of $\vec P$ in the unique way determined by symmetry.  Note that we need to use the fact that $\vec P$ has distinct singletons to make sure that no permutation other than the identity maps $\vec P$ to itself, so the symmetry requirement imposes no further restrictions on our choices.

  We need to check that, with positive probability, the choice of the $\vec p^{\vec P}$ satisfies the six conditions above.  The first three follow immediately from the construction.

  The fourth property and fifth properties hold with probability $1$, so we can certainly choose the types $\vec p^{\vec P}$ to satisfy these properties.

  For the sixth property, note that the ${\leq} k$-configurations $\vec P$ such that there exists an $e_0\subseteq k$ and a $j<|e_0|$ such that the density of $P_{e_0}$ in $T_{\biguplus_{j'}{[k]\choose j'}}(\vec P)$ is $<\frac{\epsilon}{3k}$ have measure at most $\frac{\epsilon}{3}$.  Consider a ${\leq} k$-configuration such that this does not happen.  As we observed in the previous section, the corresponding $\mathcal{N}_j,\ldots,\mathcal{N}_1$-$e_0$-type $\vec p^{\vec P}$ is chosen with the same distribution as choosing the type of a random point in $T_{\biguplus_{j'}{[k]\choose j'}} (\vec P)$.  By our choice of $c_j$, for each ${\leq}k$-configuration $\vec P$, the probability that there is any $e_0,j$ so that $P_{e_0}$ is has density $0$ in $\vec p^{\vec P\upharpoonright j}$ is at most $\epsilon/3$.  By averaging over all $\leq k$-configurations (weighted by their size), there is positive probability we choose the $\vec p^{\vec P}$ that the set of $\leq k$-configurations failing the condition in (6) has measure at most $\frac{\epsilon}{3}$.  

  This last condition implies that most points belong to non-defective configurations: the only way there is an $e$ with $\vec P_e\neq \vec Q^{\vec P}_e$ is if there is an $e$ so that $P_e$ has density $0$ in the corresponding type of lower arity, which means all such configurations are contained in the set of exceptional configurations.

\textbf{Defining $\rho'$ for most tuples}:
At this point, we have done enough to define $\rho'$ on ${\leq k}$-configurations $\vec P$ with distinct singletons.

Let $\Xi$ be the set of non-empty subsets of $\Sigma$, and for each ${\leq} k$-configuration $\vec P$ with distinct singletons, let $\xi(\vec P)=\{\sigma\in\Sigma\mid (f_\sigma)^+_{\mathcal{N}_{k-1}}(\vec p^{\vec P})>0\}$.  (Since the $f_\sigma$ add to $1$, $\xi(\vec P)$ is always non-empty, and therefore in $\Xi$.)  We will define $\rho'$ on $T_{\biguplus_{j'}{[k]\choose j'}}(\vec P)$ by setting
  \[\rho'(\vec x)=\left\{\begin{array}{ll}
                      \rho(\vec x)&\text{if }\rho(\vec x)\in\xi(\vec P)\\
                      \text{some }\sigma\in\xi(\vec P)&\text{otherwise}.
                    \end{array}\right.\]

  Note that if $\rho(\vec x)\neq\rho'(\vec x)$, we must have one of:
  \begin{itemize}
  \item the ${\leq}k$-configuration containing $\vec x$ does not have distinct singletons,
  \item the ${\leq}k$-configuration containing $\vec x$ is defective, or
  \item $\rho(x)\not\in\xi(\vec P)$.
  \end{itemize}

  The first two conditions account for points of measure at most $\epsilon/3$ each.  If $\rho(x)\not\in\xi(\vec P)$ then we have $(f_{\rho(x)})^+_{\mathcal{N}_{k-1}}(\vec p^{\vec P})=0$.  Since the $\vec p^{\vec P}$ are distributed uniformly at random, except on a set of configurations of measure at most $\epsilon/6$, $(f_{\rho(x)})^+_{\mathcal{N}_{k-1}}(\vec p^{\vec P})=0$ implies that the set of points in $\vec P$ with color $\rho(x)$ has measure at most $(\epsilon/6)\mu(T_{\biguplus_{j'}{[k]\choose j'}}(\vec P))$.  Therefore (regardless of how we define $\rho'$ on the ${\leq}k$-configurations which do not have distinct singletons),
  \[\mu(\{\vec x\mid\rho(\vec x)\neq\rho'(\vec x)\})<\epsilon.\]

  \textbf{Dealing with tuples with distinct singletons}:
  Next, again as in Lemma \ref{thm:induced_removal}, we need to decide what to do with the configurations which do not have distinct singletons.

  To motivate the construction, it is useful to look at how we will use our definition.  Suppose that, after finishing the definition of $\rho'$, we have some $\vec x_W\in T_{W,c}(\Omega,\rho')$.  Then $\vec x_W$ induces some maps into our partition: for $j< d$ we can take $\theta_j:{W\choose r_j}\rightarrow\mathcal{N}_j^{c_j}$ given by $\theta_j(e)=\mathcal{N}_j^{c_j}(\vec x_e)$.  Then for $j\leq d$, whenever $e\in{W\choose r_j}$ and $\theta_1$ is injective on $e$, we can define $\Theta(e)$ to be the ${\leq}d$-configuration $\{\theta_j(e')\}_{e'\in\bigcup_{j'<d}{e\choose r_{j'}}}$.  When $e\in{W\choose k}$ and $\theta_1$ is injective on $e$, we must have $c(e)\in\xi(\Theta(e))$.

  Let us isolate this definition: a \emph{strict blow up of the partition} is a tuple $(W,\zeta,\{\theta_j\})$ where:
  \begin{itemize}
  \item each $\theta_j:{W\choose j}\rightarrow\mathcal{N}_j^{c_j}$,
  \item whenever $e\in{W\choose k}$ and $\theta_1$ is injective on $e$, $\zeta(e)$ is defined and equal to $\xi(\Theta(e))$.
  \end{itemize}
  
  Whenever we have a strict blow up $(W,\zeta,\{\theta_j\})$, we can take the type $\vec q_W$ where, for each $e\in \bigcup_{j<d}{W\choose r_j}$, $\vec q_e=\vec p^{\Theta(e)}_e$.  Theorem \ref{thm:hypergraph_counting} applies to $\vec q_W$, so $t_{W,\zeta}(\Omega,\xi)>0$.

  We need to consider how the remaining tuples in ${W\choose k}$ are mapped.  Let us consider tuples $(W,\zeta,\iota,\{\theta_j\})$ where $(W,\zeta,\{\theta_j\})$ is a strict blow up and $\iota:({W\choose k}\setminus\dom(\zeta))\rightarrow\Sigma$.  (We make the choice here to have $\zeta$ take values in $\Xi$ while $\iota$ only takes values in $\Sigma$; it would cause no harm, except perhaps additional complication, to instead let $\iota$ be $\Xi$-valued as well.)

  We want to consider tuples $(W,\zeta,\iota,\{\theta_j\})$ where $\iota$ is ``homogeneous'', in the sense that $\iota(e)$ only depends on the configuration $e$ is mapped to.  To make this precise, let us say $\vec P=\{P_e\}$ is a \emph{${\leq}k$-configuration with repeated singletons} if each $P_e\in\mathcal{N}_{|e|}^{c_e}$ and $P_e$ is defined for all $e$ such that, for $i,i'\in e$, $P_i\neq P_{i'}$.  (That is, when $e$ contains repeated elements of $\mathcal{N}_1^{c_1}$, we simply do not define $P_e$.)  Let us define $\mathcal{Z}$ to be the set of ${\leq}k$-configurations with repeated singletons.

  We can extend the definition of $\Theta$ to those $e\in{W\choose r_j}$ where $\theta_1$ is not injective on $e$ by defining $\Theta(e)\in\mathcal{Z}$ to be $\{\theta_j(e')\}_{e'\in\bigcup_{j'<d}{[r_j]\choose r_{j'}}\text{ and }\theta_1\text{ is injective on }e'}$.  When $\nu:\mathcal{Z}\rightarrow\Sigma$, let us say $(W,\zeta,\iota,\{\theta_j\})$ is \emph{$\nu$-homogeneous} if, for all $e\in{W\choose k}\setminus\dom(\zeta)$, $\iota(e)=\nu(\Theta(e))$.

  Given $\vec x_W\in\Omega^W$, we can of course induce a function $\iota:{W\choose k}\rightarrow\Sigma$ by taking $\iota(e)=\rho(\vec x_e)$.  So what we need to do is find such $\vec x_W$ which are homogeneous.

  Let us say $(W,\zeta,\{\theta_j\})$ has \emph{size at least $d$} if for every non-defective ${\leq k}$-configuration $\vec P$ with distinct singletons, there are at least $d$ $k$-tuples in ${W\choose k}$ with $\Theta(e)=\vec P$.

  Observe that, for every $(W,\zeta,\{\theta_j\})$, there is some $d$ so that whenever $(W',\zeta',\{\theta'_j\})$ has size at least $d$, there is an embedding $\pi:W\rightarrow W'$ so that, for all $e$, $\theta_j(e)=\theta'_j(\pi(e))$.  Furthermore, for every $d$, there is an $m$ so that for any $(W,\zeta,\iota,\{\theta_j\})$ with size at least $m$, there is a $W_0\subseteq W$ so that $(W_0,\zeta,\iota,\{\theta_j\})$ has size at least $d$ and is homogeneous.

  So, for each $d$, we can take this large enough $m$ and fix a $(W,\zeta,\{\theta_j\})$ of size at least $m$.  We have $t_{W,\zeta}(\Omega,\xi)>0$ and, for each $\vec x_W\in T_{W,\zeta}(\Omega,\xi)$, we have a $W_0\subseteq W$ of size at least $d$ so that $\vec x_{W_0}$ is homogeneous (that is, there is a $\nu:\mathcal{Z}\rightarrow\Sigma$ so that, for $e\in{W_0\choose k}$, $\mathcal{P}(\vec x_e)=\nu(\Theta(e))$---equivalently, $(W_0,\zeta,\iota,\{\theta_j\})$ is $\nu$-homogeneous, where $\iota$ is induced by $\vec x_{W_0}$).  Since there are finitely many $\nu$, there must be some $\nu$ which we obtain for a set of $\vec x_W$ of positive measure.  Such a $\nu$ exists for every $m$, so there is some $\nu$ which works for arbitrarily large $m$.

  We pick such a $\nu$ and use it to complete the definition of $\rho'$: when $\vec x\in T_{\biguplus_{j'}{[k]\choose j'}}(\vec P)$ for some $\vec P\in\mathcal{Z}$, we set $\rho'(\vec x)=\nu(\vec P)$.

  \textbf{Checking that removal holds}: All that remains is show that whenever $T_{W,c}(\Omega,\rho')\neq\emptyset$ that $t_{W,c}(\Omega,\rho)>0$.

  Suppose we are given a finite $W$ and $(W,c)$ so that $T_{W,c}(\Omega,\rho')\neq\emptyset$.  Choose some $\vec x_W\in T_{W,c}(\Omega,\rho')$.

  From $\vec x_W$ we can read off $(W,\zeta,\{\theta_j\})$ by setting, for $e\in{W\choose r_j}$, $\theta_j(e)=\vec Q^{\{\mathcal{N}_{j'}^{c_{j'}}(\vec x_{e'})\}}_{e'\in\bigcup_{j'\leq j}{e\choose r_{j'}}}$ and $\zeta(e)=\xi(\vec x_e)$.  There is $d$ so that $(W,\zeta,\{\theta_j\})$ will embed in any blow up of the partition of size at least $d$.  Take $(W',\zeta',\{\theta'_j\})$ of size at least $m$ where $m$ is large enough, $t_{W',\zeta'}(\Omega,\rho)>0$ and therefore, by our choice of $\nu$, there is a positive measure of $\vec y_{W'}\in T_{W',\zeta'}(\Omega,\rho)$ such that there is a $W_0\subseteq W'$ so that $(W_0,\zeta',\{\theta'_j\})$ has size at least $d$ and $\vec y_{W_0}$ is $\nu$-homogeneous.

  So consider one of these $\vec y_{W_0}$ where, for all $e\in{W_0\choose k}$, $\vec y_e$ is a point of density for each $f_\sigma$ and a positive point of density for $f_{\mathcal{P}(\vec y_e)}$.  Fix an embedding $\pi:W\rightarrow W_0$.  We claim that, for each $e\in{W\choose k}$, $f_{c(e)}^+(\vec y_{\pi(e)})>0$.

  We consider two cases.  First, suppose $\Theta(e)\not\in\mathcal{Z}$, so $\Theta(e)$ is a non-defective ${\leq}k$-configuration with distinct singletons.  Then $c(e)=\rho'(\vec x_e)$ and, by the definition of $\rho'$, $\rho'(\vec x_e)\in\xi(\vec p^{\Theta(e)})=\zeta(e)=\zeta'(\pi(e))=\xi(\vec y_{\pi(e)})$.  Therefore $c(e)\in\xi(\vec y_{\pi(e)})$, so $f_{c(e)}^+(\vec y_{\pi(e)})>0$.

  Otherwise, we have $\Theta(e)\in\mathcal{Z}$.  Again $c(e)=\rho'(\vec x_e)$ and, by the definition of $\rho'$, $\rho'(\vec x_e)=\nu(\Theta(e))$.  Since $\vec y_{W_0}$ is $\nu$-homogeneous, we have $\rho(\vec y_{\pi(e)})=\nu(\Theta(e))$.

  So we can apply Theorem \ref{thm:hypergraph_counting} to $\vec y_{\pi(W)}$, showing that $t_{W,c}(\Omega,\rho)>0$.
\end{proof}


\section{Ordered Hypergraphs}

The work of the previous section applies, with only minimal changes, to ordered hypergraphs.

\begin{definition}
  When $(\Omega,<)$ is a linearly ordered set and $(W,\prec)$ is a finite linear order, we write $\mathrm{O}^<_{W,\prec}$ for the set of ordered $W$-tuples---that is, the set of tuples $\vec x_W\in \Omega^W$ such that whenever $w\prec w'$, $x_w<x_{w'}$.

  When $(W,\prec)$ is a finite, linearly ordered set, $S$ is a collection of subsets of $W$, and $(\Omega,<)$ is a linearly ordered set, an \emph{ordered $(W,S)$-cylinder intersection set}
 is a set of the form
  \[T_{S,\prec}(\{A_e\}_{e\in S})=T_S(\{A_e\}_{e\in S})\cap \mathrm{O}^<_{W,\prec}.\]
  We define $t_{S,\prec}(\{A_e\}_{e\in S})=\mu(T_{S,\prec}(\{A_e\}_{e\in S})$ and, more generally
  \[t_{S,\prec}(\{f_e\}_{e\in S})=\int\prod_{e\in S}f_e(\vec x_e)\cdot\chi_{\mathrm{O}^<_{W,\prec}}(\vec x_W)\,d\mu.\]

  When $H=(W,\prec,F)$ is a finite ordered $k$-graph and $G=(\Omega,<,E)$ is an ordered $k$-graph, we defined the \emph{ordered induced copies of $H$ in $G$}, written $T^{ind}_{H,\prec}(G)$, to be $T^{ind}_{{W\choose k},\prec}(\{A_e\})$ where $A_e=\left\{\begin{array}{ll}E&\text{if }e\in F\\\Omega^k\setminus E&\text{otherwise}\end{array}\right.$.  We define $t^{ind}_{H,\prec}(G)=\mu(T^{ind}_{H,\prec}(G))$.
\end{definition}

We wish to prove a removal theorem for ordered hypergraphs---that is, when $G=(\Omega,<,E)$ is an ordered hypergraph and $t^{ind}_{H,\prec}(G)=0$, there is an $E'$ with $\mu(E'\bigtriangleup E)<\epsilon$ so that $T^{ind}_{H,\prec}(G')=\emptyset$ when $G'=(\Omega,<,E')$.  (If we were willing to allow $\prec$ itself to be modified, we could use the general removal result from \cite{MR4153699}.  Similarly, if we were only considered with general (i.e. not induced) ordered copies, the result would be immediate: the only issue will be what happens right on the diagonal of $<$, so if we could simply delete a small number of edges near the diagonal, this would follow immediately from ordinary hypergraph removal.)

A crucial fact is that, even though a linear ordering is a binary relation, it is ``explained by'' properties of singletons.  We want to consider a new $\sigma$-algebra.
\begin{definition}
  For each $k$, $\mathcal{B}_{k,<}$ is the the sub-$\sigma$-algebra of $\mathcal{B}_k$ generated by all products $\prod_{i\leq k}I_i$ where each $I_i$ is an interval in $<$.
\end{definition}
Note that, by definition, $\mathcal{B}_{k,<}\subseteq\mathcal{B}_{k,1}$.

\begin{lemma}
  If $\{(\Omega^k,\mathcal{B}_k,\mu_k)\}_{k\in\mathbb{N}}$ then $\{(x,y)\mid x<y\}\in\mathcal{B}_{2,<}$.
\end{lemma}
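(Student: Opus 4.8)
The plan is to exhibit an explicit element of $\mathcal{B}_{2,<}$ that agrees with $B:=\{(x,y)\mid x<y\}$ up to a $\mu_2$‑null set, built from the ``cumulative distribution function'' of the order. Since $B$ is measurable, the Fubini clause in the definition of a Keisler graded probability space makes $F(y):=\mu_1(\{x\mid x<y\})$ a well‑defined $\mathcal{B}_1$‑measurable function, defined for $\mu_1$‑almost every $y$. Because the fibres $\{x\mid x<y\}$ increase with $y$, the function $F$ is non‑decreasing; hence for every real $a$ the set $\{y\mid F(y)<a\}$ is an initial segment of $(\Omega,<)$ and $\{y\mid F(y)>a\}$ is a final segment, so both are intervals. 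Thus $F$ is measurable with respect to the $\sigma$‑algebra generated by intervals, and since $I\times\Omega$ and $\Omega\times J$ lie in $\mathcal{B}_{2,<}$ whenever $I,J$ are intervals (with $\Omega$ itself an interval), the set
\[
B'\;:=\;\{(x,y)\mid F(x)<F(y)\}\;=\;\bigcup_{q\in\mathbb{Q}\cap(0,1)}\bigl(\{x\mid F(x)<q\}\times\{y\mid F(y)\ge q\}\bigr)
\]
is a countable union of boxes of intervals, so $B'\in\mathcal{B}_{2,<}$.

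The next step is to check that $B$ and $B'$ differ by a null set. Monotonicity of $F$ gives $x\ge y\Rightarrow F(x)\ge F(y)$, so $B'\subseteq B$, while $x<y\Rightarrow F(x)\le F(y)$, so $B\setminus B'=\{(x,y)\mid x<y,\ F(x)=F(y)\}$. Writing $L_c=F^{-1}(c)$, every such pair lies in some $L_c\times L_c$; the $L_c$ are pairwise disjoint intervals, so only countably many have positive $\mu_1$‑measure and the remaining ones contribute $0$ to $\mu_2$. For a level set $L$ of positive measure, $F$ is constant on $L$, so for $y\in L$ the measure $\mu_1(\{x\in L\mid x<y\})$ does not depend on $y$ (the difference of two such sets has the form $\{x\in L\mid y_1\le x<y_2\}$, of measure $F(y_2)-F(y_1)=0$); calling this common value $g$, one argues $g=0$ — immediately if $L$ has a least element, and otherwise by taking the downward limit of $\mu_1(\{x\in L\mid x<y_n\})$ along a coinitial sequence $y_1>y_2>\cdots$ in $L$, whose intersection is empty. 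Then $\mu_2(\{x<y\}\cap(L\times L))=g\cdot\mu_1(L)=0$ by Fubini, and summing over the countably many level sets gives $\mu_2(B\bigtriangleup B')=0$. Since $\mathcal{B}_{2,<}$ is taken complete (equivalently, one passes to its completion, as is done elsewhere in the paper), $B\in\mathcal{B}_{2,<}$.

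I expect the main obstacle to be precisely that last ``diagonal'' estimate: showing that each positive‑measure level set of $F$ contributes nothing to $\mu_2(\{x<y\})$. The delicate point is the coinitial‑limit step, which requires the left end of $L$ to be approximable by a countable family of sub‑segments; this is exactly the kind of regularity available in the countably approximated (in particular, ultraproduct) setting in which the theorem is used, and where necessary I would run the whole argument through the countable approximating algebra $\mathcal{B}_1^0$ and its associated level sets rather than through arbitrary intervals. (An alternative to the $F$‑based argument is to pick a system of neighbourhoods of arity $1$ whose generated $\sigma$‑algebra contains $\mathcal{B}_{1,<}$, apply Lemma~\ref{thm:almost_every_lebesgue} to $\chi_B$, and observe that for $x\neq y$ all sufficiently fine interval boxes around $(x,y)$ lie on one side of the diagonal, so $\mathbb{E}(\chi_B\mid\mathcal{B}_{2,<})=\chi_B$ a.e.; this has the same null‑set and countability caveats.) Everything else — the construction of $F$, its interval‑measurability, and the identification of $B'$ as a countable union of interval boxes — is routine.
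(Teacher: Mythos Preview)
Your route differs from the paper's, which is more direct: given $\epsilon>0$, write $\Omega=\bigcup_{i<n}I_i$ as a finite union of disjoint intervals each with $\mu_1(I_i)<\epsilon$; then $O_\epsilon=\bigcup_{i<j}I_i\times I_j\in\mathcal{B}_{2,<}$ is contained in $B$, and $B\setminus O_\epsilon$ sits inside $\bigcup_i I_i^2$, which has measure $\sum_i\mu_1(I_i)^2\le(\max_i\mu_1(I_i))\sum_i\mu_1(I_i)<\epsilon$. Letting $\epsilon\to 0$ finishes. No CDF, no level sets.

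Your CDF construction of $B'$ is fine, but the diagonal estimate has a real gap. The coinitial-sequence step fails in exactly the setting you care about: an ultraproduct is $\aleph_1$-saturated, so no infinite interval has a countable coinitial sequence, and countable approximation does not rescue this---it supplies countably many \emph{sets} dense in measure, not countably many \emph{points} coinitial in $L$. Relatedly, ``the remaining [null level sets] contribute $0$ to $\mu_2$'' is a sum over an uncountable index; route it through Fubini instead (for fixed $y$ the slice lies in the single level set $L_{F(y)}$). There is, however, a clean fix that makes your positive-measure case vacuous: if $\mu_1(L)=m>0$, your observation that $g(y):=\mu_1(\{x\in L:x<y\})$ is constant on $L$, combined with the Fubini identity $\int_L g=\mu_2(B\cap L^2)=\int_L(m-g)$, forces $g\equiv m/2$; applying the same reasoning to $L'=\{x\in L:x<y_0\}$ (an interval of measure $m/2$ on which $F$ is still constant, and on which $g'=g$) forces $g\equiv m/4$ on $L'$, a contradiction. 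Hence every level set of $F$ is $\mu_1$-null, and with that your argument goes through.
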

\begin{proof}
  We show that, for any $\epsilon>0$, we may approximate $\{(x,y)\mid x<y\}$ to within $\epsilon$.  Given $\epsilon>0$, write $\Omega=\bigcup_{i<n}I_i$ where the $I_i$ are disjoint intervals (open or closed) with $\mu_1(I_i)<\epsilon$.   Let $O_\epsilon$ be the union, over all pairs $i<j<n$, of $I_i\times I_j$.  Then $O_\epsilon$ is a product, so belongs to $\mathcal{B}_{2,<}$, $O_\epsilon\subseteq\{(x,y)\mid x<y\}$, and if $x<y$ but $(x,y)\not\in O_\epsilon$, it must be that $(x,y)\in I_i^2$ for some $i<n$.  But the measure of the diagonal $I_i^2$ is less than $\epsilon$.

  Since this holds for every $\epsilon>0$, $\{(x,y)\mid x<y\}\in\mathcal{B}_{2,<}$.
\end{proof}

In general, this means that when $(W,\prec)$ is a partially ordered set, the set of $\vec x_W$ which respect the partial ordering is $\mathcal{B}_{W,<}$-measurable, since it is an intersection of sets of the form $\{x_W\mid x_w<x_{w'}\}$.

\begin{theorem}\label{thm:ordered_hypergraph_counting}
  Let $(W,\prec)$ be a partially ordered finite set, let $\mathcal{N}_d,\ldots,\mathcal{N}_1,\mathcal{N}_{1,<}$ be a properly aligned sequence of systems of neighborhoods so that $\mathcal{N}_i$ is a nested system of neighborhoods with arity $r_i$, let $S$ be a set of subsets of $W$, and suppose that $\vec p_W$ is a $\mathcal{N}_d,\ldots,\mathcal{N}_1,\mathcal{N}_{1,<}$-type such that:
  \begin{itemize}
  \item when $w\prec w'$, there is an $i$ so that $\vec p_w(i)<\vec p_{w'}(i')$,
  \item for each $e\in S$, the restriction $\vec p_W$ is a positive dense type for $f_e$, and
  \item for each $e\in S$, either:
    \begin{itemize}
    \item $f_e$ is $\mathcal{K}_{e,r_d}(\mathcal{N}_d)$-measurable, or
    \item for every $e'\in S\setminus\{e\}$, the function $\vec x_e\mapsto \int f_{e'}(\vec x_{e\cup e'})\,d\mu(\vec x_{e'\setminus e})$ is $\mathcal{K}_{e,r_d}(\mathcal{N}_d)$-measurable.
    \end{itemize}
  \end{itemize}
  Then $t_{S,\prec}(\{f_e\})>0$.
\end{theorem}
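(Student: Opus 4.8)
The plan is to deduce this from the unordered counting lemma, Theorem~\ref{thm:hypergraph_counting}, by treating the order constraint as one additional cylinder intersection set whose components live at the interval level $\mathcal{N}_{1,<}$ sitting at the bottom of the sequence. Write $S^{<}$ for the set of pairs $\{w,w'\}\subseteq W$ that are comparable under $\prec$, oriented so that $w\prec w'$, and for such a pair let $g_{\{w,w'\}}=\chi_{\{\vec x\in\Omega^{\{w,w'\}}\,\mid\,x_w<x_{w'}\}}$. By the lemma showing $\{(x,y)\mid x<y\}\in\mathcal{B}_{2,<}$, each $g_e$ is $\mathcal{B}_{e,<}$-measurable, hence $\mathcal{K}_{e,1}(\mathcal{N}^\sigma_{1,<})$-measurable; moreover $\mathrm{O}^<_{W,\prec}=\bigcap_{\{w,w'\}\in S^{<}}\{\vec x_W\mid x_w<x_{w'}\}$, so $\chi_{\mathrm{O}^<_{W,\prec}}=\prod_{e\in S^{<}}g_e$. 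Setting $S'=S\cup S^{<}$ and $h_e=f_e$ for $e\in S\setminus S^{<}$, $h_e=g_e$ for $e\in S^{<}\setminus S$, and $h_e=f_eg_e$ for $e\in S\cap S^{<}$, we obtain
\[t_{S,\prec}(\{f_e\})=\int\prod_{e\in S}f_e(\vec x_e)\cdot\prod_{e\in S^{<}}g_e(\vec x_e)\,d\mu=t_{S'}(\{h_e\}),\]
so it suffices to prove $t_{S'}(\{h_e\})>0$, and for that I would apply Theorem~\ref{thm:hypergraph_counting} to $W$, the properly aligned sequence $\mathcal{N}_d,\ldots,\mathcal{N}_1,\mathcal{N}_{1,<}$, the set $S'$, the type $\vec p_W$, and the family $\{h_e\}_{e\in S'}$.

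It remains to check the hypotheses of Theorem~\ref{thm:hypergraph_counting} for the enlarged family. For the positive-dense-type condition, the case $e\in S\setminus S^{<}$ is exactly what is assumed; for $e=\{w,w'\}\in S^{<}$, the first bullet of the present theorem says that $\vec p_w$ and $\vec p_{w'}$ separate in the correct order at some level, so for all sufficiently large $j$ the interval $\vec p_w(j)$ lies entirely below $\vec p_{w'}(j)$, whence $g_e$ is identically $1$ on a neighborhood of $\vec p_e$; thus $g_e^+=1>0$ and $\vec p_W$ restricted to $e$ is (trivially) a positive dense type for $g_e$, and for $e\in S\cap S^{<}$ the product $h_e=f_eg_e$ coincides with $f_e$ near $\vec p_e$ and so inherits being a positive dense type. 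For the measurability condition, each $g_e$ satisfies the first alternative directly: it is $\mathcal{K}_{e,1}(\mathcal{N}^\sigma_{1,<})$-measurable, and since $\mathcal{N}_{1,<}$ is the bottom entry of the properly aligned sequence this sub-$\sigma$-algebra is contained in $\mathcal{K}_{e,r_d}(\mathcal{N}^\sigma_d)$; and for $e\in S$ the alternative that held against $S$ still holds against the larger $S'$, because the new factors $g_{e'}$ are interval-measurable functions of at most two coordinates, so their marginals $\vec x_e\mapsto\int g_{e'}(\vec x_{e\cup e'})\,d\mu(\vec x_{e'\setminus e})$ remain interval-measurable (using order-convexity of intervals) and hence $\mathcal{K}_{e,r_d}(\mathcal{N}^\sigma_d)$-measurable. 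Theorem~\ref{thm:hypergraph_counting} then yields $t_{S'}(\{h_e\})>0$, which is what we want.

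I expect the only delicate point to be the bookkeeping rather than anything conceptual: one must confirm that splicing the interval system $\mathcal{N}_{1,<}$ onto the bottom of $\mathcal{N}_d,\ldots,\mathcal{N}_1$ preserves proper alignment (which is built into the hypothesis, but should be reconciled with the way the lemma $\{(x,y)\mid x<y\}\in\mathcal{B}_{2,<}$ is used), and that the containment $\mathcal{K}_{e,1}(\mathcal{N}^\sigma_{1,<})\subseteq\mathcal{K}_{e,r_d}(\mathcal{N}^\sigma_d)$ together with the marginal computations above really do go through in all the degenerate cases (for instance when $|e|<r_d$). Once these routine verifications are in place, the theorem is an immediate consequence of the unordered counting lemma, exactly as the introduction promises.
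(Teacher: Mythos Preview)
Your approach is different from the paper's. The paper does not reduce to Theorem~\ref{thm:hypergraph_counting} as a black box; instead it re-runs the induction of that theorem, carrying the factor $\chi_{\mathrm{O}^<_{W,\prec}}$ along as a passive side factor through every inductive step, and then handles the ordering only at a modified base case (where the neighborhoods are intervals, so the box $\prod_w\vec p_w(j)$ is automatically contained in $\mathrm{O}^<_{W,\prec}$ once $j$ is large). The one-line justification the paper gives for the inductive step being ``unchanged'' is that the set of ordered tuples is $\mathcal{K}_{W,r_i}(\mathcal{N}^\sigma_i)$-measurable for every $i$ --- exactly the containment you also flag as needing to be checked.

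There is, however, a genuine obstruction to your black-box reduction. Your new elements $e\in S^{<}$ are pairs, and whenever $r_d>2$ the $\sigma$-algebra $\mathcal{K}_{e,r_d}(\mathcal{N}^\sigma_d)$ is trivial (a $2$-set has no $r_d$-subsets), so neither alternative in the measurability hypothesis of Theorem~\ref{thm:hypergraph_counting} can hold for a non-constant $g_e$. Tracing this back, the constraint comes from Lemma~\ref{thm:reduction}, whose hypothesis requires $|e_0|\ge r$ for every $e_0\in S$. The repair is not hard --- one observes that the $g_e$ never need to be projected, and that their restrictions to any large $e_0\in S$ are interval-measurable and hence (granting the containment above) $\mathcal{K}_{e_0,r_d}(\mathcal{N}^\sigma_d)$-measurable, so the induction goes through with the $g_e$ held fixed --- but carrying this out means opening up Theorem~\ref{thm:hypergraph_counting} and re-proving it with $\chi_{\mathrm{O}^<_{W,\prec}}$ as a passive factor. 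That is precisely the paper's argument. So the step you label ``routine verifications'' in the degenerate case $|e|<r_d$ is in fact where the actual content lives, and once you do it you have essentially reproduced the paper's proof rather than bypassed it.
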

\begin{proof}
  We proceed by induction on $d$.  When $d=0$ (that is, we are considering a $\mathcal{N}_{1,<}$-type), the proof is similar to Theorem \ref{thm:graph_counting}, taking care to respect the ordering.
  
Choose some $\epsilon\leq\min_{e\in S}f^+_e(\vec x_e)$.

Since each $\vec p_e$ is a dense type, we may choose some $j$ large enough that, for each $e\in S$, 
\[\frac{1}{\mu(\vec p_e(j))}\mu(\{\vec y_e\in \vec p_e(j)\mid f^+_e(\vec y_e)\geq\epsilon/2\})\geq 1-\frac{1}{|S|}.\]
Consider $\prod_{w\in W}p_w(j)$.  This is a product of intervals and, when $j$ is large enough, the map $w\mapsto p_w(j)$ is order preserving.  Therefore 
\[\frac{1}{\mu(\vec p_W(j))}\mu(\{\vec y_W\in \vec p_W(j)\mid f^+_e(\vec y_e)\geq\epsilon/2\text{ and whenever }w\prec w'\text{, }y_w<y_{w'}\})\geq 1-\frac{1}{|S|}.\]

Therefore
\[\gamma=\mu(\{\vec y_W\in\vec p_W(j)\mid \text{ there is some }e\in S\text{ such that }f^+_e(\vec y_e)<\epsilon/2\})<|S|\frac{1}{|S|}=1,\]
so, using Lemma \ref{thm:reduction},
\[t_{S,\prec}(\{f_e\})=t_{S,\prec}(\{f^+_e\})\geq\frac{\epsilon^{|S|}}{2^{|S|}}(1-\gamma)>0.\]

The argument from Theorem \ref{thm:hypergraph_counting} applies unchanged for the inductive case since the set of ordered tuples is $\mathcal{N}_i$-measurable for all $i$.
\end{proof}

\begin{theorem}[Ordered Hypergraph Removal]\label{thm:ordered_hypergraph_removal}
  Let $\Sigma$ be a finite set and let $(\Omega,\rho,<)$ be given along with a countably approximated atomless Keisler graded probability space on $\Omega$ with $\rho:{\Omega\choose k}\rightarrow\Sigma$ such that each $\rho^{-1}(\sigma)\in\mathcal{B}_2^0$ and a dense collection of intervals of $<$ is in $\mathcal{B}_1^0$.  For each $\epsilon>0$ there is a $\rho':{\Omega\choose k}\rightarrow\Sigma$ such that
  \[\mu(\{\vec x\in\Omega^k\mid\rho(\vec x)\neq \rho'(\vec x)\})<\epsilon,\]
each $(\rho')^{-1}(\sigma)\in\mathcal{B}_{2,0}$, and, for all $(W,c,\prec)$, if $t_{W,c,\prec}(\Omega,\rho)=0$ then $T_{W,c,\prec}(\Omega,\rho')=\emptyset$.
\end{theorem}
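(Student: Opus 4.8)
The plan is to run the proof of Theorem~\ref{thm:induced_hypergraph_removal} essentially verbatim, but fed the longer sequence of systems of neighborhoods $\mathcal{N}_{k-1},\ldots,\mathcal{N}_1,\mathcal{N}_{1,<}$ in place of $\mathcal{N}_{k-1},\ldots,\mathcal{N}_1$, where $\mathcal{N}_{1,<}$ is a system of neighborhoods of arity $1$ each of whose partitions $\mathcal{N}_{1,<}^j$ is a partition of $\Omega$ into finitely many intervals. Such a system exists because $\mu_1$ is atomless and a dense collection of intervals lies in $\mathcal{B}_1^0$: build a refining sequence of finite interval-partitions with maximal cell measure tending to $0$ that resolves that dense collection, and further take $\mathcal{N}_1$ so that each $\mathcal{N}_1^j$ refines $\mathcal{N}_{1,<}^j$ (so that each ordinary $\mathcal{N}_1$-cell lies inside an interval-cell). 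The enlarged sequence is still properly aligned: the standard-example sequence $\mathcal{N}_{k-1},\ldots,\mathcal{N}_1$ is, and the only new clause---that $\mathcal{N}_1$ (arity $1$) be properly aligned in $\mathcal{N}_{1,<}$ (arity $1$)---is immediate, since integrating out the unique coordinate of a set of $1$-tuples leaves a constant, which is trivially $\mathcal{K}_{1,1}(\mathcal{N}_{1,<})$-measurable. So the proof of Theorem~\ref{thm:induced_hypergraph_removal} goes through with $d=k+1$ tiers, the $\rho$-partition $\mathcal{P}$ at the top with arity $k$ and $\mathcal{N}_{1,<}$ at the bottom with arity $1$, provided every appeal there to the counting lemma Theorem~\ref{thm:hypergraph_counting} is replaced by an appeal to the ordered counting lemma Theorem~\ref{thm:ordered_hypergraph_counting}.

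Once $\mathcal{N}_{1,<}$ is the bottom system, the proof's map ``$\theta_1$'' becomes the assignment of a vertex to its interval-cell, ``distinct singletons'' means ``distinct interval-cells'' (and, because $\mathcal{N}_1$ refines $\mathcal{N}_{1,<}$, automatically distinct $\mathcal{N}_1$-cells too), and the set of $k$-tuples with two coordinates in a common interval-cell has measure $<\epsilon/3$ once the finest level $c_1$ is large; those tuples are handled by the existing $\mathcal{Z}/\nu$ machinery. Two observations make the ordering fall out. First, for a configuration $\vec P$ with distinct singletons the representative type $\vec p^{\vec P}$ automatically satisfies the ordering hypothesis of Theorem~\ref{thm:ordered_hypergraph_counting}: if $w\prec w'$ then the interval-cells of $w$ and $w'$ are disjoint, hence $<$-ordered, and since every finer level of $\mathcal{N}_{1,<}$ refines them this persists, so $\vec p_w(i)<\vec p_{w'}(i)$ for all large $i$. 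Second, a copy $\vec x_W\in T_{W,c,\prec}(\Omega,\rho')$ lies in $\mathrm{O}^<_{W,\prec}$, so the interval-map it induces is order-respecting; accordingly I work with \emph{ordered} blow-ups $(W',\prec',\zeta',\{\theta_j\})$ in the ``size at least $d$''/Ramsey step, with $\prec'$ refining the order on interval-cells, and when choosing representatives I lay the points of each fibre down inside their common interval in $\prec'$-order---and hence, generically, at pairwise distinct interval-types---before extracting a $\nu$-homogeneous sub-blow-up. Cross-cell order is then forced by the cell order and intra-cell order is imposed at a bounded cost, so this is harmless; the resulting witness $\vec y_{\pi(W)}$ is an ordered tuple with pairwise distinct, $\prec$-compatible interval-types, so Theorem~\ref{thm:ordered_hypergraph_counting} applies to it and yields $t_{W,c,\prec}(\Omega,\rho)>0$. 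The bound $\mu(\{\vec x\mid\rho(\vec x)\neq\rho'(\vec x)\})<\epsilon$ is obtained exactly as in Theorem~\ref{thm:induced_hypergraph_removal}, the only new contribution being the $<\epsilon/3$ from tuples with two coordinates in a common interval-cell, already budgeted.

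The main obstacle is precisely this threading of the ordering through the $\mathcal{Z}/\nu$ machinery: one must verify that insisting each fibre's representative points be laid down in $\prec'$-order before the $\Sigma$-colorings of $k$-subsets are examined does not destroy the existence of $\nu$-homogeneous ordered sub-blow-ups of unbounded size---it does not, because intra-fibre positions are otherwise unconstrained, so the ordered realizations form a positive fraction of all realizations---and that this genuinely forces the final witness to have pairwise distinct, $\prec$-compatible interval-types, so that the \emph{ordered} counting lemma rather than merely the unordered one may be applied at the last step. Everything else---the density lemmas, the ordered counting lemma itself, and the bulk of the configuration bookkeeping---transfers mechanically once $\mathcal{N}_{1,<}$ sits at the bottom of a properly aligned sequence and $\mathcal{N}_1$ is taken to refine it.
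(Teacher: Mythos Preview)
Your proposal is correct and follows essentially the same approach as the paper: run the proof of Theorem~\ref{thm:induced_hypergraph_removal} with the extended properly aligned sequence $\mathcal{N}_{k-1},\ldots,\mathcal{N}_1,\mathcal{N}_{1,<}$ (so $d=k+1$), replace appeals to Theorem~\ref{thm:hypergraph_counting} by Theorem~\ref{thm:ordered_hypergraph_counting}, and thread the ordering through the homogenization step by working with ordered blow-ups and noting the Ramsey-type property still holds. You supply more detail than the paper does---the explicit construction of $\mathcal{N}_{1,<}$, the verification of proper alignment, the refinement $\mathcal{N}_1^j\preceq\mathcal{N}_{1,<}^j$, and the device of laying fibre-points down in $\prec'$-order---but these are exactly the implicit checks the paper's two-paragraph proof gestures at.
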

\begin{proof}
  The proof is largely unchanged from the proof of Theorem \ref{thm:induced_hypergraph_removal} using the sequence of systems of neighborhoods $\mathcal{N}_{k-1},\mathcal{N}_{k-2},\ldots,\mathcal{N}_1,\mathcal{N}_{1,<}$, so we take $d=k+1$, and using Theorem \ref{thm:ordered_hypergraph_counting}.

  The only further step that needs to be checked carefully is the homogenization step when dealing with tuples with distinct singeltons.  Our definition of a blow up $(W,\zeta,\{\theta_j\})$ is unchanged---note that a partial ordering of $W$, on pairs where $\theta_1$ is injective, can be inferred from the assignment $\theta_1$.  Our homogeneous blowups $(W,\zeta,\iota,\{\theta_j\},\prec)$ are defined to have total orderings where $\prec$ is consistent with $\theta_1$.  The crucial point is that the Ramsey-type property still holds: for every $d$, there is an $m$ so that for any $(W,\zeta,\iota,\{\theta_j\},\prec)$, there is a $W_0\subseteq W$ so that $(W_0,\zeta,\iota,\{\theta_j\},\prec)$ is homogeneous. The ordering is no obstacle to obtaining this by the usual Ramsey theoretic arguments, and the rest of the proof is unchanged.
\end{proof}

\begin{cor}[Ordered hypergraph removal lemma]\label{thm:main_ordered}
  For every finite set $\Sigma$, every $\epsilon>0$, and every $\Sigma$-colored $(W,\prec,c)$, there is a $\delta>0$ so that for any $(\Omega,<,\rho)$ with $t_{W,\prec,c}(\Omega,<,\rho)<\delta$ there is a $\rho'$ with
  \[|\{\vec x\in \Omega^k\mid \rho(\vec x)\neq\rho'(\vec x)\}|<\epsilon|\Omega|^k\]
such that $T_{W,\prec,c}(\Omega,<,\rho)=\emptyset$.
\end{cor}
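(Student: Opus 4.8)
The plan is to derive the corollary from Theorem~\ref{thm:ordered_hypergraph_removal} by the standard ultraproduct compactness argument, exactly as in the sketch following Theorem~\ref{thm:graph_removal} and in \cite{goldbring:_approx_logic_measure}. Suppose the statement fails: fix a finite $\Sigma$, an $\epsilon>0$, and a $\Sigma$-colored $(W,\prec,c)$ for which no $\delta$ works. Then for each $n$ there is an ordered structure $(\Omega_n,<_n,\rho_n)$ with $t_{W,\prec,c}(\Omega_n,<_n,\rho_n)<1/n$ but such that every $\rho'_n:{\Omega_n\choose k}\to\Sigma$ with $|\{\vec x\mid\rho_n(\vec x)\neq\rho'_n(\vec x)\}|<\epsilon|\Omega_n|^k$ satisfies $T_{W,\prec,c}(\Omega_n,<_n,\rho'_n)\neq\emptyset$. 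Taking $\rho'_n=\rho_n$ shows $T_{W,\prec,c}(\Omega_n,<_n,\rho_n)\neq\emptyset$, hence $t_{W,\prec,c}(\Omega_n,<_n,\rho_n)\geq|\Omega_n|^{-|W|}$; combined with $t_{W,\prec,c}(\Omega_n,<_n,\rho_n)<1/n$ this forces $|\Omega_n|\to\infty$.

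Next I would fix a nonprincipal ultrafilter $\mathcal{U}$ on $\mathbb{N}$ and form the ultraproduct $(\Omega,<,\rho)=\prod_{n\to\mathcal{U}}(\Omega_n,<_n,\rho_n)$, working in a language rich enough that $<$ and each color class $\rho^{-1}(\sigma)$ are definable (hence lie in the countable algebra $\mathcal{B}_2^0$ of definable sets) and that, for every $m$, the partition of each $\Omega_n$ into $m$ consecutive blocks of near-equal size is named by unary predicates. Equipping $\Omega$ with the Keisler graded probability space generated by the definable sets together with the Loeb measures coming from the normalized counting measures $|{\cdot}|/|\Omega_n|^{r}$ gives a countably approximated atomless Keisler graded probability space; then $<$ is measurable and the named blocks provide a dense family of intervals in $\mathcal{B}_1^0$, so all the hypotheses of Theorem~\ref{thm:ordered_hypergraph_removal} are in place. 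By the \L{o}\'s theorem $T_{W,\prec,c}(\Omega,<,\rho)$ is the ultraproduct of the $T_{W,\prec,c}(\Omega_n,<_n,\rho_n)$, so its Loeb measure equals $\lim_{n\to\mathcal{U}}t_{W,\prec,c}(\Omega_n,<_n,\rho_n)=0$; that is, $t_{W,\prec,c}(\Omega,<,\rho)=0$.

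Applying Theorem~\ref{thm:ordered_hypergraph_removal} yields $\rho':{\Omega\choose k}\to\Sigma$ with $\mu(\{\vec x\mid\rho(\vec x)\neq\rho'(\vec x)\})<\epsilon$, each $(\rho')^{-1}(\sigma)\in\mathcal{B}_2^0$, and $T_{W,\prec,c}(\Omega,<,\rho')=\emptyset$. Since every color class of $\rho'$ is definable, $\rho'$ is defined by a formula; let $\rho'_n$ denote its interpretation in $\Omega_n$. Both $\{\vec x\mid\rho(\vec x)\neq\rho'(\vec x)\}$ and $T_{W,\prec,c}(\Omega,<,\rho')$ are definable, so by the \L{o}\'s theorem, for $\mathcal{U}$-almost every $n$, $T_{W,\prec,c}(\Omega_n,<_n,\rho'_n)=\emptyset$ (emptiness is first-order) and, since the Loeb measure of a definable set is the ultralimit of its normalized counting measures, $|\{\vec x\mid\rho_n(\vec x)\neq\rho'_n(\vec x)\}|<\epsilon|\Omega_n|^k$. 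Any such $n$ contradicts the choice of $(\Omega_n,<_n,\rho_n)$, which proves the corollary.

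I expect no genuine mathematical obstacle here---which is why the text says the corollary follows immediately---so the only real work is the bookkeeping in the second step: choosing the language so that $<$, the interval partitions witnessing its measurability, the color classes of $\rho$, and (after the fact) those of $\rho'$ are all uniformly definable across the $\Omega_n$, and checking that the Loeb construction produces a countably approximated atomless Keisler graded probability space of exactly the form required by Theorem~\ref{thm:ordered_hypergraph_removal}; all of this is routine and carried out in detail in \cite{goldbring:_approx_logic_measure}. The uniform form of the statement given in the introduction---a single $\rho'$ handling all small $(W,c,\prec)$ together with a ``many copies'' alternative---follows by the same argument, using that Theorem~\ref{thm:ordered_hypergraph_removal} already treats all $(W,c,\prec)$ at once.
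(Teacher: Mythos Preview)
Your proposal is correct and takes essentially the same approach as the paper: the paper does not give a separate proof of this corollary, stating only that it follows from Theorem~\ref{thm:ordered_hypergraph_removal} by the standard ultraproduct argument of \cite{goldbring:_approx_logic_measure}, with the sketch after Theorem~\ref{thm:graph_removal} as a template. Your added care in naming interval partitions so that a dense family of intervals lands in $\mathcal{B}_1^0$, and in exploiting that Theorem~\ref{thm:ordered_hypergraph_removal} returns a $\rho'$ with definable color classes (so \L{o}\'s applies directly, without the extra approximation step needed in the non-induced graph case), are exactly the right refinements of that template for this setting.
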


\begin{cor}[Infinite ordered hypergraph removal lemma]
  For every finite set $\Sigma$, every $\epsilon>0$, and every family $\mathcal{F}$ of finite $\Sigma$-colored ordered hypergraphs, there are $\delta>0$ and a bound $M$ so that for any $(\Omega,<,\rho)$, if, for every $(W,\prec,c)\in\mathcal{F}$ with $|W|\leq M$ we have $t_{W,\prec,c}(\Omega,<,\rho)<\delta$, then 
  there is a $\rho'$ with
  \[|\{\vec x\in \Omega^k\mid \rho(\vec x)\neq\rho'(\vec x)\}|<\epsilon|\Omega|^k\]
such that for every $(W,\prec,c)\in\mathcal{F}$, $T_{W,\prec,c}(\Omega,<,\rho)=\emptyset$.
\end{cor}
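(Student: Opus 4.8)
The plan is to run the same ultraproduct compactness argument by which the finite corollaries of the earlier sections are deduced from their infinitary counterparts, the only genuinely new point being how to cope with $\mathcal{F}$ possibly being infinite. So suppose the statement fails for some $\Sigma$, $\epsilon$, $\mathcal{F}$: then for each $n$ there is an ordered coloring $(\Omega_n,<_n,\rho_n)$ with $t_{W,\prec,c}(\Omega_n,<_n,\rho_n)<1/n$ for every $(W,\prec,c)\in\mathcal{F}$ with $|W|\le n$, but with no $\rho'_n$ satisfying $|\{\vec x\mid\rho_n(\vec x)\ne\rho'_n(\vec x)\}|<\epsilon|\Omega_n|^k$ being $\mathcal{F}$-free. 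As usual we may assume $|\Omega_n|\to\infty$ (otherwise, for large $n$, $t_{W,\prec,c}<1/n$ forces $T_{W,\prec,c}(\Omega_n,\rho_n)=\emptyset$ for $|W|\le n$, and larger members of $\mathcal{F}$ cannot embed, so $\rho_n$ itself is already $\mathcal{F}$-free). Pass to an ultraproduct $(\Omega,<,\rho)$ with the Loeb measure; this is a countably approximated atomless Keisler graded probability space in which $<$ is measurable and a dense family of intervals lies in $\mathcal{B}_1^0$, using the definable sets as in \cite{goldbring:_approx_logic_measure}. For each \emph{fixed} $(W,\prec,c)\in\mathcal{F}$, once $n\ge|W|$ the density is $<1/n$, so by \L{}o\'s's theorem $t_{W,\prec,c}(\Omega,<,\rho)=0$.

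Apply Theorem \ref{thm:ordered_hypergraph_removal} with $\epsilon/2$ in place of $\epsilon$. This produces a $\rho'$ with each $(\rho')^{-1}(\sigma)$ lying in the algebra generated by the approximating sets (hence definable), with $\mu(\{\vec x\mid\rho(\vec x)\ne\rho'(\vec x)\})<\epsilon/2$, and such that $T_{W,c,\prec}(\Omega,\rho')=\emptyset$ for \emph{every} ordered $\Sigma$-coloring $(W,c,\prec)$ of zero $\rho$-density --- in particular $\rho'$ is $\mathcal{F}$-free. Being definable, $\rho'$ pulls back to colorings $\rho'_n$ of $\Omega_n$; since $\{\rho\ne\rho'\}$ is a definable set of measure $<\epsilon/2$, \L{}o\'s's theorem gives $|\{\rho_n\ne\rho'_n\}|<\epsilon|\Omega_n|^k$ for $n$ in an ultrafilter-large set.

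It remains to transfer ``$\rho'$ is $\mathcal{F}$-free'' back to infinitely many $\Omega_n$, and this is where the infinitude of $\mathcal{F}$ must be defeated. The point is that the coloring $\rho'$ built in the proof of Theorem \ref{thm:ordered_hypergraph_removal} (following Theorem \ref{thm:induced_hypergraph_removal}) is governed by a \emph{bounded} amount of data: the finite partitions $\mathcal{N}^{c_i}_i$ together with the maps $\xi$ and $\nu$ assigning to each configuration the set of colors of positive density and the homogeneous diagonal value. From this data one reads off, for any finite ordered $\Sigma$-coloring $(W,c,\prec)$, whether it admits a ``blow-up pattern'' compatible with the configurations and with $\xi$; an induced copy of $(W,c,\prec)$ in $\rho'$ exists exactly when such a pattern exists (the counting lemma, Theorem \ref{thm:ordered_hypergraph_counting}, supplies the copy from the pattern), and any pattern for a coloring with two order-adjacent vertices in the same block of $\mathcal{N}^{c_1}_1$ collapses to a pattern for its twin-reduction. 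Hence the colorings embeddable in $\rho'$ form a class generated under blow-ups by finitely many colorings, all of bounded size, and ``$\rho'$ is $\mathcal{F}$-free'' is equivalent to ``$\rho'$ is $\mathcal{F}_M$-free'', where $\mathcal{F}_M=\{(W,\prec,c)\in\mathcal{F}:|W|\le M\}$ and $M$ depends only on the governing data and on $\mathcal{F}$: for each of the finitely many twin-reduced patterns that is realized by some member of $\mathcal{F}$, choose one such member and let $M$ bound their sizes. Since $\mathcal{F}_M$ is finite, \L{}o\'s's theorem transfers ``$\rho'_n$ is $\mathcal{F}_M$-free'' to an ultrafilter-large set of $n$; and for ultrafilter-large $n$ the governing data of $\rho'_n$ agrees with that of $\rho'$ (the thresholds $c_i$ were chosen robustly, exactly as in clause (6) of the proof of Theorem \ref{thm:induced_hypergraph_removal}, so the relevant densities transfer, and for large $n$ the blocks in question have more than $M$ vertices), so ``$\rho'_n$ is $\mathcal{F}_M$-free'' upgrades to ``$\rho'_n$ is $\mathcal{F}$-free''. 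Any such $n$ gives an $\epsilon$-modification of $\rho_n$ that is $\mathcal{F}$-free, contradicting the choice of $(\Omega_n,<_n,\rho_n)$; tracing the argument back, the $M$ above together with the $\delta$ produced for the finite family $\mathcal{F}_M$ witness the statement.

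The main obstacle is this last paragraph: establishing that the removal-lemma output $\rho'$ is controlled by a bounded reduced structure, that induced copies in it reduce --- via a pigeonhole/Ramsey collapse of twins --- to bounded-size copies so that $\mathcal{F}$-freeness becomes a finitary condition, and that this reduced structure together with the density thresholds making it robust transfer to the finite $\Omega_n$ (including the treatment of blocks of small or zero measure, handled as in the defective-configuration analysis of Theorem \ref{thm:induced_hypergraph_removal}). The linear order itself poses no extra difficulty beyond what is already absorbed into Theorems \ref{thm:ordered_hypergraph_counting} and \ref{thm:ordered_hypergraph_removal}.
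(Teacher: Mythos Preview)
The paper does not give a proof of this corollary (nor of Corollary \ref{thm:main_ordered}); both are left as routine consequences of Theorem \ref{thm:ordered_hypergraph_removal} via the standard ultraproduct transfer.  Your first two paragraphs carry this out correctly, and you rightly identify that the only real issue is transferring ``$\rho'$ is $\mathcal{F}$-free'' when $\mathcal{F}$ may be infinite.

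Your resolution of that issue, however, has a gap.  The claim that the colorings embeddable in $\rho'$ are ``generated under blow-ups'' of bounded-size colorings --- so that one can pass to a twin-reduction --- is not correct: on a configuration $\vec P$ with $|\xi(\vec P)|>1$, the modified coloring $\rho'$ agrees with $\rho$, so two vertices $w,w'$ mapped to the same cell of every partition need not be twins in $(W,\prec,c)$ (the edges through $w$ and through $w'$ can receive different colors from $\xi(\vec P)$), and collapsing them does not yield a well-defined smaller coloring.  The derivation of $M$ from ``finitely many twin-reduced patterns realized by members of $\mathcal{F}$'' therefore does not go through.  Fortunately the fix is \emph{simpler} than what you attempt.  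You already observe that $\rho'$ is governed by the finite data $D=(\{\mathcal{N}_i^{c_i}\},\xi,\nu,\vec Q)$ and that an embedding of $(W,\prec,c)$ into $\rho'$ yields a pattern $\{\theta_j\}$ compatible with $D$ (namely $c(e)\in\xi(\vec Q^{\Theta(e)})$, or $c(e)=\nu(\Theta(e))$ on repeated singletons); the point to exploit is that this implication is literally the definition of $\rho'$ and hence holds verbatim in every $\Omega_n$.  Meanwhile, ``$(W,\prec,c)$ admits a compatible pattern with $D$'' is a purely combinatorial statement about $(W,\prec,c)$ and $D$, with no reference to $\Omega$ at all, and the ``Checking that removal holds'' step in the proof of Theorem \ref{thm:induced_hypergraph_removal} (which uses only the pattern, not the witnessing tuple $\vec x_W$) shows that any compatible pattern forces $t_{W,\prec,c}(\Omega,\rho)>0$ in the ultraproduct.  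Since every member of $\mathcal{F}$ has zero density there, no member of $\mathcal{F}$ admits a compatible pattern; hence no member of $\mathcal{F}$ embeds in $\rho'_n$, for \emph{every} $n$.  No twin reduction is needed, and the bound $M$ in the statement falls out of the compactness framing rather than the transfer step.
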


\section{Further Directions}

We have not attempted to identify the correct common generalization of Theorems \ref{thm:induced_hypergraph_removal} and \ref{thm:ordered_hypergraph_removal} to give a general theorem saying that certain structures can be removed while preserving some fixed structure.  Such a theorem must make some promise about the measurability of the fixed structure, and additionally place some sort of Ramsey-type condition on it.

There are other examples in the literature where some distinguished family of sets analogous to $\mathcal{B}_<$ is of particular interest.  In particular, \cite{MR4136157} considers a computational setting; translated into our framework here, we add the assumption that the points of $\Omega$ are understood to have a structure like binary sequences $2^\Lambda$, embodied in a distinguished family of sets $\mathcal{B}_{1,c}$ which consists of those sets
\[[s]=\{\omega\mid \forall \lambda\in\dom(s)\,s(\lambda)=\omega(\lambda)\]
where $s$ is a partial function with finite domain from $\Lambda$ to $\{0,1\}$.  That is, the distinguished sets are those in which a finite number of coordinates have been fixed.  The regularity lemma they prove is precisely the one corresponding to the sequence of $\sigma$-algebras $\mathcal{B}_1,\mathcal{B}_{1,c}$; extending the removal lemma to this setting (or to longer sequences $\mathcal{B}_d,\ldots,\mathcal{B}_1,\mathcal{B}_{1,c}$) would require identifying interesting structures to be the fixed part (analogous to the ordering) which are $\mathcal{B}_{1,c}$-measurable---that is, the relation symbols in this structure would have to have the property that they can be calculated on all but measure $\epsilon$ points while examining the input at only finitely many points in $\Lambda$.

\printbibliography
\end{document}